\documentclass[11pt]{article} 
\usepackage{latexsym, amsfonts, amssymb, amsmath, amsthm, mathtools}
\usepackage{graphicx}
\usepackage{multirow}
\usepackage{multicol}
\graphicspath{ {images/} }
\usepackage[dvipsnames]{xcolor}
\usepackage[hidelinks,
            pdftitle={Existence, and uniqueness of monotone traveling wave solutions of parabolic-elliptic chenotaxis models with logistic-type source},
            pdfauthor={Wenxian Shen},
            pdfkeywords={existence, uniqueneess, stability, monotonicity, traveling waves parabolic--elliptic chemotaxis models, logistic-type source},
            pdfpagemode=FullScreen,
            breaklinks=true,
           ]{hyperref}
\usepackage[left=1in, right=1in, top=1in, bottom=1in]{geometry}
\usepackage{colortbl}
\usepackage{ulem}
\usepackage{caption}

\usepackage{pgfplots}
\pgfplotsset{compat=1.18}
\usepackage{tikz}
\usetikzlibrary{shapes.geometric,positioning,arrows,intersections,patterns}

\newif\ifrefaudit
\refauditfalse
\ifrefaudit
  \usepackage[nocites]{refcheck}
\fi

\definecolor{TableRowGray}{gray}{0.9}

\usepackage{amsrefs}

\BibSpec{article}{%
    +{}  {\PrintAuthors}                {author}
    +{,} { \textit}                     {title}
    +{.} { }                            {part}
    +{:} { \textit}                     {subtitle}
    +{,} { \PrintContributions}         {contribution}
    +{.} { \PrintPartials}              {partial}
    +{,} { }                            {journal}
    +{}  { \textbf}                     {volume}
    +{}  { \PrintDatePV}                {date}
    +{,} { \issuetext}                  {number}
    +{,} { \eprintpages}                {pages}
    +{,} { }                            {status}
    +{,} { \url}                        {url}
    +{,} { \PrintDOI}                   {doi}
    +{,} { available at \eprint}        {eprint}
    +{}  { \parenthesize}               {language}
    +{}  { \PrintTranslation}           {translation}
    +{;} { \PrintReprint}               {reprint}
    +{.} { }                            {note}
    +{.} {}                             {transition}
    +{}  {\SentenceSpace \PrintReviews} {review}
}

\newtheorem{theorem}{Theorem}[section] 

\newtheorem{lemma}{Lemma}[section]

\newtheorem{proposition}{Proposition}[section]

\theoremstyle{remark}
\newtheorem{remark}{Remark}[section]
\theoremstyle{remark}

\numberwithin{equation}{section}
\numberwithin{figure}{section}
\numberwithin{table}{section}

\def\R{\mathbb R}

\newcommand\RR{\ensuremath{\mathbb{R}}}

\begin{document}

\title{Existence,  uniqueness, stability, and  monotonicity of  traveling waves for repulsion/attraction  chemotaxis models with  logistic type  source
}
\author{
  Wenxian Shen \\
Department of Mathematics and Statistics\\
Auburn University, Auburn, AL 36849, USA}

\date{}
\maketitle

\begin{abstract}
This paper is devoted to the study of existence,  uniqueness, stability, and monotonicity of traveling wave solutions 
to the following parabolic-elliptic chemotaxis system with logistic type source
$$
\begin{cases}
u_t=u_{xx}-\chi(u^m v_x)_x +u(1-u^\alpha),\quad &x\in\RR\cr
0=v_{xx}-v+u^\gamma,\quad&x\in\RR,
\end{cases}\eqno(CM)
$$
where $m,\alpha,\gamma\ge 1$ and $\chi\in\RR$. System (CM) can be used to describe   the evolution of a biological species influenced by a chemical substance produced by the species itself. 
In this context,
 the function $u$ denotes the population density of
the biological species, and $v$ denotes the concentration of the chemical agent. Traveling wave solutions of (CM)  connecting the two constant solutions $(1,1)$ and $(0,0)$ are among important types of solutions, which characterize  the front propagation phenomena in (CM). The existence of such traveling wave solutions to (CM) with $m=\alpha=\gamma=1$ has been studied in several papers. However,
there is little study on the uniqueness,  stability,  and monotonicity of traveling wave solutions of (CM) in literature and there is also no study on the existence of traveling wave solutions of (CM) connecting $(1,1)$ and $(0,0)$ for general $m,\alpha,\gamma\ge 1$.
In the current paper,  we  prove the existence of  traveling wave solutions of (CM)
connecting $(1,1)$ and $(0,0)$ for any $\chi\le 0$  with speed $c$ large than some number $c^*_{\chi,m,\gamma}$,  or for $0<\chi<1/2$  with any speed $c>2$. We prove that the traveling wave solutions are monotone when $\chi\le 0$.  We also prove the uniqueness and stability  of traveling wave solutions of (CM)  connecting $(1,1)$ and $(0,0)$ when the speed $c$ is larger than some number $c^{**}_{\chi,m,\alpha, \gamma}(\ge c^*_{\chi,m,\gamma})$.

Note that when $m=\gamma=\alpha=1$, it is proved in \cite{GrHeTu2023} that negative chemotaxis  speeds up 
front propagation and when $\chi\ll -1$, the speed $c\ge O(\sqrt{|\chi|})$.
The lower bound $c^*_{\chi,m,\gamma}$ for wave speeds of traveling wave solutions of (CM) with $\chi\le 0$ is not expected to be optimal, but it is close to optimal when $|\chi|$ is small and large in the sense that
when $\chi=0$ and $\gamma=1$, $c^*_{\chi,m,\gamma}=2$,
which is the optimal lower bound for the wave speeds of (CM) with $\chi=0$, and when $\chi\to -\infty$, 
$c^*_{\chi,m,\gamma}=O(\sqrt{|\chi|})$. 
Also,  $c^{**}_{\chi,m, \alpha,\gamma}$ is not expected to be optimal,   but $c^{**}_{\chi,m,\alpha,\gamma}$ is close to optimal when $|\chi|$ is small  and $\gamma=1$ in the sense
that $c^{**}_{\chi,m,\alpha,\gamma}=2+O(|\chi|^{1/6})$ when $|\chi|\to 0$.
\end{abstract}

\noindent \textbf{Keywords}.  Existence, uniqueness,  stability,  monotonicity, traveling waves, parabolic-elliptic chemotaxis model,  logistic type  source.

\medskip

\noindent \textbf{AMS Subject Classification (2020)}: 35B35, 35C07, 35K45, 35Q92, 92C17, 92D25.

\newpage

\tableofcontents

\newpage

\section{Introduction}

\subsection{Overview} 

The  current paper is concerned with the existence,  stability, and monotonicity of  traveling wave solutions 
to the following parabolic-elliptic chemotaxis system with logistic type source,
\begin{equation}
\label{main-eq}
\begin{cases}
u_t=u_{xx}-\chi(u^m v_x)_x +u(1-u^\alpha),\quad &x\in\RR\cr
0=v_{xx}-v+u^\gamma,\quad&x\in\RR,
\end{cases}
\end{equation}
where $m,\alpha,\gamma\ge 1$ and $\chi\in\RR$. System \eqref{main-eq} can be used to describe   the evolution of a biological species influenced by a chemical substance which is  produced by the species itself
and diffuses very fast. 
In this context,
 the function $u$ denotes the population density of
the biological species, and $v$ denotes the concentration of the chemical agent. The constant $\chi$ 
is referred to as chemotaxis sensitivity coefficient.  
The chemotaxis sensitivity coefficient $\chi >0$ indicates that the signal is a chemoattractant,  in this case, \eqref{main-eq} is called an attraction chemotaxis system,
while $\chi<0$ indicates that the signal is  a chemorepellent, in this case, \eqref{main-eq} is called a repulsion chemotaxis system.

Considering chemotaxis models, it is interesting to study how the chemical signal affects the evolution of the biological species, say, whether  the chemical signal  induces finite time blow-up, whether it creates certain pattern formation, etc. Numerous works have been carried out toward these central problems for various
chemotaxis models on both bounded  and unbounded domains.  Chemotaxis models on unbounded domains can 
describe the influence of chemical signals from more angles, say, from the angle of spatial spreading, which is strongly related to traveling wave solutions connecting two different states.

It is clear that \eqref{main-eq} has two constant states $(1,1)$ and $(0,0)$. {\it Positive traveling wave solutions of \eqref{main-eq} connecting these two states  with speed $c$}  are  solutions $(u(t,x),v(t,x))$
of the form $(u(t,x),v(t,x))=(\phi(x-ct),\psi(x-ct))$ for some one variable positive  functions $\phi(\cdot)$, $\psi(\cdot)$ satisfying $\lim_{x\to -\infty}(\phi(x),\psi(x))=(1,1)$ and $\lim_{x\to\infty}(\phi(x),\psi(x))=(0,0)$.
Such solutions  describe how fast the biological species spreads from higher density regions to lower density regions. Throughout this paper, we only consider positive traveling wave solutions, which we may just call traveling wave solutions.

When $\chi=0$, the first equation in \eqref{main-eq} is decoupled from the second one and becomes
\begin{equation}
\label{fisher-kpp-eq}
u_t=u_{xx}+u(1-u^\alpha),\quad x\in\RR.
\end{equation}
Equation \eqref{fisher-kpp-eq} is referred to as  a Fisher-KPP type equation in literature due to the pioneering work 
\cite{Fisher}, \cite{KPP} for the case $\alpha=1$. It is well known that \eqref{fisher-kpp-eq}  has positive  traveling wave solutions $u(t,x)=\phi(x-ct)$ connecting $1$ and $0$ $(i.e., \phi(-\infty)=1,\phi(\infty)=0)$ of all speeds $c\geq c_0^*:=2$ and
has no such traveling wave solutions of slower
 speed. The traveling wave solutions of \eqref{fisher-kpp-eq} connecting $1$ and $0$ are also monotone and stable in proper weighted spaces.    Moreover, 
 for any
nonnegative solution $u(t,x)$ of \eqref{fisher-kpp-eq}, if  the support ${\rm supp}(u(0,\cdot))$ of
$u(0,x)$  is nonempty and compact, then
\begin{equation}
\label{spreading-eq1}
\liminf _{|x|\le c''t, t\to \infty} u(t,x) >0\quad \forall\,  0<c''<c_0^*,
\end{equation}
and
\begin{equation}
\label{spreading-eq2}
\limsup_{|x| \ge c't, t\to \infty}u(t,x)=0 \quad \forall \, c'>c_0^*.
\end{equation}
Hence, in
literature,  the minimal wave speed $c^*_0=2$ is   also called the {\it
spreading speed} for \eqref{fisher-kpp-eq}.  Since the pioneering works by \cite{Fisher} and \cite{KPP},  a huge amount  of research has been carried out toward the front propagation dynamics of
  reaction diffusion equations of the form,
\begin{equation}
\label{general-fisher-eq}
u_t=\Delta u+u f(t,x,u),\quad x\in\R^N,
\end{equation}
where $f(t,x,u)<0$ for $u\gg 1$,  $\partial_u f(t,x,u)<0$ for $u\ge 0$ (see \cite{ArWe2,BeHaNa1,BeHaNa2,Henri1,Fre,FrGa,LiZh,LiZh1,Nad,NoRuXi,NoXi1,She1,She2,Wei1,Wei2,Zla}, etc.).

\smallskip

Consider \eqref{main-eq} with $\chi\not =0$.  It is important to study whether  \eqref{main-eq} has traveling wave solutions connecting $(1,1)$ and $(0,0)$, and if so, whether the traveling wave solutions are monotone and stable in proper weighted spaces; whether \eqref{main-eq} possesses a spreading speed;  whether the chemotaxis speeds up or slows down the wave speeds or the spatial spreading of the biological species, etc. 
There are several studies on the existence of  traveling wave solutions  and spreading speeds of \eqref{main-eq} with $m=\alpha=\gamma=1$,  see, for example, \cite{AvHoSc2025, GrHeTu2023, HaHe2020, Hen2022,  HeRe2024, NaPeRy2008, Sal2019, SaSh2017, SaSh2017-1,
SaSh2018, SaSh2019, SaSh2020, SaShXu2019}.

 Suppose that $m=\alpha=\gamma=1$.   Among those, the following have been proved/observed  in literature:

\begin{itemize}

\item  ({\it Spatial spreading speeds}).  The spreading property \eqref{spreading-eq1}   holds for nonnegative solutions $(u(t,x)$, $v(t,x))$ of \eqref{main-eq} with ${\rm supp} (u(0,\cdot))$ being nonempty 
(see \cite[Theorem 1.1]{HaHe2020},  \cite[Theorem 1.1]{SaShXu2019}).  This indicates that chemotaxis does not slow down the spatial spreading of the biological species.  When $0<\chi\le 1$, the spreading property \eqref{spreading-eq2} holds for nonnegative solutions $(u(t,x)$, $v(t,x))$ of \eqref{main-eq} with ${\rm supp} (u(0,\cdot))$ being nonempty and compact (see \cite[Theorem 1.1]{SaShXu2019}). This indicates that when $0<\chi\le 1$,  chemotaxis neither slows down nor speeds up the spatial spreading of the biological species. In general,  \eqref{spreading-eq2} holds 
 with $c_0^*$ being replaced by some $c_+^*\ge 2$ (see \cite[Theorem 1.1]{HaHe2020}, \cite[Theorem B]{SaSh2017-1},  \cite[Theorem C]{SaSh2020}).

\item   ({\it Lower bound of wave speeds or minimal wave speed}). 
{$c_0^*=2$ is a lower bound of the wave speeds in the sense that  if $(u(t,x),v(t,x))=(\phi(x-ct),\psi(x-ct))$ is a traveling wave solution of \eqref{main-eq} connecting $(1,1)$ and $(0,0)$ with speed $c$, then $c\ge 2$}  (it follows from \cite[Theorem 1.2]{HaHe2020}, \cite[Proposition 1.3]{Hen2022},  \cite[Theorem 1.2]{HeRe2024},  \cite[Theorem C]{SaSh2020}). This indicates that no matter the chemotaxis sensitivity is positive or negative, it does not slow down the wave speeds.
Moreover, {if $|\chi|$ is relatively small, then $c_0^*=2$ is the minimal wave speed of traveling wave solutions, or, there are  traveling wave solutions  of \eqref{main-eq} connecting $(1,1)$ and $(0,0)$ with speed $c= 2$} 
(it follows from \cite[Theorem 1.4]{Hen2022}, \cite[Theorem 1.4]{SaShXu2019}). This indicates that the chemotaxis sensitivity does not speed up the spatial spreading of the biological species when $|\chi|$ is small.

\item  ({\it Negative chemotaxis sensitivity and speed-up}). 
{For any $\chi<0$, there is $c_{\chi}^*\ge 2$ such that for any $c>c_{\chi}^*$, \eqref{main-eq} has a traveling wave solution connecting $(1,1)$ and $(0,0)$ with speed $c$}
(it follows from \cite[Theorem A]{SaSh2020}).  
When $\chi\ll -1$, any speed $c$ of a traveling wave solution of \eqref{main-eq} connecting
$(1,1)$ and $(0,0)$ is very large  and hence strong negative sensitivity speeds up the wave speeds
(see \cite[Theorem 2.1]{GrHeTu2023}).

\item ({\it Positive chemotaxis sensitivity and oscillatory  traveling waves}).
When $0<\chi<1/2$, for any $c\ge 2$, \eqref{main-eq} admits a traveling wave solution connecting
$(1,1)$ and $(0,0)$ with speed $c$ (see \cite[Theorem 1.4]{SaShXu2019}). 
 For general $\chi>0$, there  exist a $c\ge 2$ and  a traveling wave solution $(u(t,x),v(t,x))=(\phi(x-ct),\psi(x-ct))$ with
$\liminf_{x\to \infty}\phi(x)>0$ and $\lim_{x\to\infty}\phi(x)=0$ (it follows from \cite[Theorem 1.2]{HeRe2024}). Note that this traveling wave solution may not converge to $(1,1)$ and may be oscillatory as $x\to -\infty$ (see the numerical simulations in \cite{HeRe2024}). Moreover,
It is observed numerically that 
 the minimal speed \eqref{main-eq} appears to always be $2$  for $\chi >0$ (see \cite{HeRe2024}).

\end{itemize}

The above results from literature provide deep  insights into  the understanding of front propagation phenomena in \eqref{main-eq}. 
Though there exist many studies on traveling wave solutions of \eqref{main-eq}, there  is still  little study on  numerous interesting problems about traveling wave solutions of \eqref{main-eq}, for example,  there is little study about the stability and monotonicity of traveling wave solutions of \eqref{main-eq} in literature, even when $m=\alpha=\gamma=1$; there is no study on the existence of traveling wave solutions of \eqref{main-eq} for general $m,\alpha,\gamma\ge 1$.

The objective of the current paper is to study the existence,  uniqueness, stability, and monotonicity of traveling wave solutions \eqref{main-eq} connecting
$(1,1)$ and $(0,0)$ for general $m,\alpha,\gamma\ge 1$, including $m=\alpha=\gamma=1$ as a special case.  To this end, we first  study the Cauchy problem for general $m,\alpha,\gamma$  and the stability of
the positive constant solution $(1,1)$ since they have not been studied for general $m,\alpha,\gamma\ge 1$ and play  important roles in the  study of traveling waves.  Before we state our main results in next subsection,  let us  highlight some of the contributions of this paper in the following:

\begin{itemize}

\item ({\it Global well-posedness of the Cauchy problem})  This paper establishes  explicit parameter regions for the global existence and boundedness of positive classical solutions  (see Proposition \ref{global-existence-prop}),
which provides  impacts of the parameters on the global existence and
extends \cite[Theorem 1.4]{HaShZh2024}  for the case $m=\alpha=\gamma=1$ to the general case $m,\alpha,\gamma\ge 1$.

\item ({\it Stability of the positive constant solution}) This paper  establishes explicit  parameter regions for the global stability of 
the positive constant solution $(1,1)$ with respect to strictly positive perturbations (see Proposition \ref{constant-stability-prop}), which   provides   impacts of the parameters on the stability of the positive constant solution and extends \cite[Theorem B]{SaSh2019} for the case $m=\alpha=\gamma=1$ to the general case $m,\alpha,\gamma\ge 1$.

\item ({\it Existence of monotone traveling wave solutions with negative sensitivity})
This paper finds an explicit lower bound $c^{*}_{\chi,m,\gamma}$ for the wave speeds and shows that
for any $\chi\le 0$ and $c>c^{*}_{\chi,m,\gamma}$, \eqref{main-eq} with $\alpha\le m+\gamma-1$  has a monotone traveling wave solution
connecting $(1,1)$ and $(0,0)$ with speed $c$ and explicit decay rate (see Theorem \ref{monotone-thm}(1)), which not only extends the existence result in  \cite[Theorem A]{SaSh2020}  for the case $m=\alpha=\gamma=1$ to the general case $m,\alpha,\gamma\ge 1$, but also shows the monotonicity and explicit decay rate
of traveling wave solutions for the first time. Moreover, the lower bound $c^*_{\chi,m,\gamma}$ is close to be optimal in certain sense (see Remark \ref{rk2}(2)).

\item ({\it Existence of traveling wave solutions with positivity sensitivity})  This paper proves   the existence of traveling wave solutions of \eqref{main-eq} connecting $(1,1)$ and $(0,0)$ with  explicit exponentially decay rate for any speed  $c>2$ 
when $0\le \chi<\min\{\frac{1}{2},\frac{2\gamma+2}{2\gamma+m+1}\}$ and $\alpha=m+\gamma-1$,
 which   extends the existence results in \cite[Theorem 1.2]{SaShXu2019} for the case $m=\alpha=\gamma=1$ to the more general case $\alpha=m+\gamma-1$ (see
Theorem \ref{monotone-thm}(2)) and also recovers the existing  results for the Fisher-KPP equation.

\item ({\it Uniqueness and stability of traveling wave solutions)} This paper finds a positive $c^{**}_{\chi,m,\alpha,\gamma}$, which depends on $\chi, m,\alpha, \gamma$ explicitly and goes to $2$ as $\chi\to 0$, and proves that  for any $c>c^*_{\chi,m,\alpha,\gamma}$,   traveling wave solutions of \eqref{main-eq} connecting $(1,1)$ and $(0,0)$ with speed $c$ (if exist) are stable in some weighted spaces (see Theorem \ref{stability-thm}), which is new even for the case that $m=\alpha=\gamma=1$. Uniqueness of traveling wave solutions of \eqref{main-eq} connecting $(1,1)$ and $(0,0)$ with speed $c>c^{**}_{\chi,m,\alpha,\gamma}$ is also obtained in this paper.

\item ({\it  New techniques/ideas and extension of existing methods}) This paper develops several techniques and extends  some existing methods for the study of Cauchy problem and existence, uniqueness,  and stability of traveling wave solutions of \eqref{main-eq} (see   Remark \ref{rk1}(1),(2), Remark \ref{rk2} (4), Remark \ref{rk3} (2), and Remark \ref{rk4}(2)). These techniques and methods can also be applied to some other chemotaxis models.
\end{itemize}

It should be pointed out that  there are a lot of works  on the  studies of  traveling
wave solutions of various  types of chemotaxis models, see, for example 
\cite{AiHuWa,AiWa, ChCh, ChZo,FuMiTs,HoSt, LiLiXu,LiLiWa,MaNoSh,NaPeRy2008,Wan}, etc.

\subsection{Statements of the main results}

In this subsection, we state our main results on the existence,  stability, and monotonicity of traveling wave solutions of \eqref{main-eq} connecting $(1,1)$ and $(0,0)$. 

To this end, we first discuss the Cauchy problem of \eqref{main-eq}.
By the arguments of \cite[Theorem 1.1]{SaSh2017}, for any $u_0\in C_{\rm unif}^b(\RR)$, there is
$T_{\max}(u_0)\in (0,\infty]$ such that \eqref{main-eq} has a unique classical solution
$(u(t,x;u_0), v(t,x;u_0))$ with $u(0,x;u_0)=u_0(x)$ for $t\in (0,T_{\max}(u_0))$, and if $T_{\max}(u_0)<\infty$, then
$$
\limsup_{t\to T_{\max}(u_0)-} \|u(t,\cdot;u_0)\|_\infty=\infty.
$$
When $m=\alpha=\gamma=1$, it is proved in \cite{HaShZh2024}  (see \cite[Theorem 1.4]{HaShZh2024}) that for any $u_0\in C_{\rm unif}^b(\RR)$ with $u_0\ge 0$,  it holds that 
\begin{equation}
\label{global-existence-eq0}
T_{\max}(u_0)=\infty\quad {\rm and}\quad 
\limsup_{t\to\infty} \|u(t,\cdot;u_0)\|_\infty<\infty.
\end{equation}
Moreover,  it is proved in  \cite{SaSh2019} (see \cite[Theorem B]{SaSh2019})  that, if $\chi<\frac{1}{2}$, then for any $u_0\in C_{\rm unif}^b(\RR)$ with $\inf_{x\in\RR}u_0(x)>0$, it holds that
\begin{equation}
\label{stability-eq0}
\lim_{t\to\infty} \Big(\|u(t,\cdot;u_0)-1\|_\infty+\|v(t,\cdot;u_0)-1\|_\infty\Big)=0.
\end{equation}

For general $m,\alpha,\gamma\ge 1$,  the work \cite{GaSaTe}  studied the global existence of positive solutions and stability of the positive constant solution of the counterpart of \eqref{main-eq} on bounded domain with Neumann boundary condition. 
However, there is little study on the global existence, boundedness, and stabilization of positive classical solutions of \eqref{main-eq}, which play important roles in the study of existence and stability of traveling wave solutions connecting $(1,1)$ and $(0,0)$.  We therefore give a study on the Cauchy problem  of \eqref{main-eq} in this paper.

Let
$$
C_{\rm unif}^b(\RR)=\{u\in C(\RR,\RR)\,|\, u\,\,\, \text{is uniformly continuous and bounded on}\,\,\RR\}
$$
with norm $\|u\|=\sup_{x\in\RR}|u(x)|$.  We have
the following two propositions, which will be proved in Section \ref{cauchy-problem-section}.

\begin{proposition}[Global existence and boundedness of classical solutions]
\label{global-existence-prop}
Assume that $m,\alpha,\gamma\ge 1$.
\begin{itemize}
\item[(1)] If $\chi\le 0$, then for any $u_0\in C_{\rm unif}^b(\RR)$ with $u_0\ge 0$,
the classical solution $(u(t,x;u_0),v(t,x;u_0))$ of \eqref{main-eq} with $u(0,x;u_0)=u_0(x)$ exists globally
(i.e. $T_{\max}(u_0)=\infty$),
\begin{equation}
\label{upper-bound-eq1}
u(t,x;u_0)\le \max\{1,\sup_{x\in\RR}u_0(x)\}\quad \forall\, t\ge 0,\,\, x\in\RR,
\end{equation}
and
\begin{equation}
\label{upper-limit-eq1}
\limsup_{t\to\infty} \sup_{x\in\mathbb{R}}u(t,x;u_0)\le 1.
\end{equation}

\item[(2)]  If  $\chi> 0$ and $\alpha>m+\gamma-1$, or $0<\chi<\min\{ \frac{2m-1}{m-1}, \frac{m+\gamma -1}{\gamma-1}\}$ and $\alpha= m+\gamma -1$,  then  for any $u_0\in C_{\rm unif}^b(\RR)$ with $u_0\ge 0$,
the classical solution $(u(t,x;u_0),v(t,x;u_0))$ of \eqref{main-eq} with $u(0,x;u_0)=u_0(x)$ exists globally
(i.e. $T_{\max}(u_0)=\infty$) and
\begin{equation}
\label{upper-bound-eq2}
\limsup_{t\to\infty}\|u(t,\cdot;u_0)\|_\infty<\infty.
\end{equation}
If, in addition,  $0<\chi<1$, then 
\begin{equation}
\label{upper-limit-eq2}
\limsup_{t\to\infty} \sup_{x\in\mathbb{R}}u(t,x;u_0)\le \Big(\frac{1}{1-\chi}\Big)^{\frac{1}{\alpha}}.
\end{equation}
\end{itemize}
\end{proposition}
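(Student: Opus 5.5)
The plan is to rewrite the first equation in non-divergence form using the second equation. Since $v_{xx}=v-u^\gamma$, we get
$$
u_t=u_{xx}-\chi m u^{m-1}u_xv_x+u\big(1-u^\alpha\big)-\chi u^m\big(v-u^\gamma\big)
=u_{xx}-\chi m u^{m-1}v_xu_x+u\Big(1-u^\alpha-\chi u^{m-1}v+\chi u^{m+\gamma-1}\Big).
$$
We work on $[0,T_{\max}(u_0))$ and treat $b(t,x)=\chi m u^{m-1}v_x$ as a given bounded continuous drift on each compact subinterval. Then comparison with spatially homogeneous super-solutions, i.e.\ solutions of ODEs $\bar u'=\bar u\,F(\bar u)$, gives sup bounds. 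Nonnegativity of $u$ follows from the comparison principle with $0$, and hence $v\ge 0$ as well, because $v=(1-\partial_{xx})^{-1}u^\gamma$ is given by a positive kernel.

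\emph{Part (1), $\chi\le 0$.} Here $-\chi u^{m-1}v\ge 0$ is a bad term, but we can bound it using $v\le \|u(t)\|_\infty^\gamma$. The cleaner route is to look at the maximum point directly. At a spatial maximum $x_0$ of $u(t,\cdot)$ with $u(t,x_0)=M(t)>1$, we have $v\le M^\gamma$, so
$$
-\chi M^{m-1}(v-M^\gamma)\le 0,
$$
since $\chi\le0$ and $v-M^\gamma\le 0$. Together with $u_x=0$ and $u_{xx}\le 0$ this gives $\frac{d}{dt}M\le M(1-M^\alpha)$ in the Dini-derivative sense. Because the line is unbounded, the maximum may not be attained, so I would implement this rigorously by comparing $u$ with the spatially constant function $\bar u(t)$ solving $\bar u'=\bar u(1-\bar u^\alpha)$, $\bar u(0)=\max\{1,\|u_0\|\}$. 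The quantity $w=\bar u-u$ satisfies a linear inequality whose zeroth-order coefficient is bounded; the key sign fact is $-\chi u^m(v-u^\gamma)\le -\chi u^m(\bar u^\gamma-u^\gamma)$, which is handled by the mean value theorem. The term $v\le \bar u^\gamma$ itself relies on $u\le\bar u$, so I would use a continuity/bootstrap argument, or compare with $\bar u+\epsilon$ and take the first touching time, with a standard weight $e^{-\delta x^2}$ or a $\liminf$ argument at infinity. The result is \eqref{upper-bound-eq1}. Global existence then follows from the blow-up criterion, and \eqref{upper-limit-eq1} follows because $\bar u(t)\to1$.

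\emph{Part (2), $\chi>0$.} Now $-\chi u^{m-1}v\le 0$ helps, and the reaction is bounded by $u(1-u^\alpha+\chi u^{m+\gamma-1})$. If $\alpha>m+\gamma-1$, the ODE $\bar u'=\bar u(1-\bar u^\alpha+\chi\bar u^{m+\gamma-1})$ has a positive equilibrium $K$ that attracts all positive data, and the same comparison as above gives boundedness together with $\limsup\|u\|\le K$. In the critical case $\alpha=m+\gamma-1$, the ODE is $\bar u'=\bar u(1-(1-\chi)\bar u^\alpha)$. This suffices for $\chi<1$ and directly gives \eqref{upper-limit-eq2} with bound $(1/(1-\chi))^{1/\alpha}$.

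The main obstacle is $\chi\ge1$ in the critical case, where the range allowed by $\min\{\frac{2m-1}{m-1},\frac{m+\gamma-1}{\gamma-1}\}$ exceeds $1$. There I would not throw away $-\chi u^{m-1}v$, and instead use a lower bound of $v$ at the maximum point. Writing $v=\frac12\int e^{-|x-y|}u^\gamma(y)dy$ and using that $v$ is controlled in terms of both $u^\gamma(x_0)$ and $v_{xx}$, the combination $\chi u^{m-1}(u^\gamma-v)$ must be estimated. Alternatively, I would first derive an $L^p_{loc}$-uniform (or weighted $L^p$) bound by testing with $u^{p-1}\varphi$ for a suitable exponent $p$ related to $m$ and $\gamma$, then obtain an $L^\infty$ bound via Moser iteration or semigroup estimates. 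The constraint on $\chi$ would emerge from requiring the coefficient of $\int u^{p+\alpha}$ to remain negative after integrating by parts, where $\frac{2m-1}{m-1}$ comes from the $u^{m-1}u_xv_x$ term and $\frac{m+\gamma-1}{\gamma-1}$ from the $v$-equation. I expect this energy estimate to be the hard step.
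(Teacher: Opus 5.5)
Your proposal is correct in outline, and for most of the statement it takes a different and more economical route than the paper.

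\textbf{Part (1).} The paper gets \eqref{upper-bound-eq1} from a Schauder fixed point in $\mathcal{E}_T(u_0)=\{\underline u_0\le u\le \bar u_0\}$. It then proves \eqref{upper-limit-eq1} separately: $\sup_x u(t,\cdot)$ is nonincreasing while it exceeds $1$, and a limiting entire solution attaining $u_+>1$ at $(0,0)$ gives a contradiction. You instead use that for $\chi\le 0$ the reaction $u(1-u^\alpha)+|\chi|u^m(v-u^\gamma)$ is increasing in $v$, and compare with $\bar u'=\bar u(1-\bar u^\alpha)$. The bootstrap is harmless. Start $\bar u_\epsilon$ at $\max\{1,\|u_0\|_\infty\}+\epsilon$. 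Up to the first time $t^*$ at which $\sup_x(u-\bar u_\epsilon)$ reaches $0$, one has $v\le\bar u_\epsilon^\gamma$. So $u-\bar u_\epsilon$ satisfies a linear inequality with bounded coefficients, and comparison with $-\epsilon e^{-Kt}$ gives a contradiction at $t^*$; no weight is needed. This yields \eqref{upper-bound-eq1}, \eqref{upper-limit-eq1} and a rate in one stroke.

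\textbf{Part (2), easy cases.} Dropping $-\chi u^m v\le 0$ and comparing with $\bar u'=\bar u(1-\bar u^\alpha+\chi\bar u^{m+\gamma-1})$ settles two cases with explicit bounds: $\alpha>m+\gamma-1$ for every $\chi>0$, and the critical case for $\chi<1$. The paper instead runs its weighted $L^p$ machinery in all cases and uses this ODE only for \eqref{upper-limit-eq2}. For \eqref{upper-limit-eq2} when $\alpha>m+\gamma-1$, add the check that $1-s^\alpha+\chi s^{m+\gamma-1}\le 1-(1-\chi)s^\alpha$ for $s\ge 1$. Hence the attracting zero satisfies $K\le(1/(1-\chi))^{1/\alpha}$.

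\textbf{Part (2), the hard case.} The one case that is not yet a proof is $\alpha=m+\gamma-1$, $1\le\chi<\min\{\frac{2m-1}{m-1},\frac{m+\gamma-1}{\gamma-1}\}$, which is nonempty since this minimum exceeds $1$. Your plan here is the paper's, but two things need correcting.

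First, drop the max-point alternative. A narrow spike makes $v(x_0)\ll u(x_0)^\gamma$, so no useful lower bound on $v$ at a maximum exists.

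Second, your account of where the thresholds come from is off. Test with $u^{p-1}\psi$, $\psi$ as in \eqref{psi-eq00}. The chemotactic contribution is $\chi(p-1)\int u^{p+m-2}u_xv_x\psi+\chi\int u^{p+m-1}v_x\psi_x$. Write $u^{p+m-2}u_x=(u^{p+m-1})_x/(p+m-1)$, integrate by parts, and use $v_{xx}=v-u^\gamma$ with $v\ge0$. The only non-small term is then $\frac{\chi(p-1)}{p+m-1}\int u^{p+\alpha}\psi$. It is absorbed by $-\int u^{p+\alpha}\psi$ iff $\chi<\frac{p+m-1}{p-1}$. The remaining terms carry a factor $\kappa_1$ and are absorbed via Young's inequality and Lemma \ref{prelim-v-lm4}.

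So the energy step produces a single condition, weakest for small $p$. The two thresholds enter only through lower bounds on $p$ required afterwards:
\begin{itemize}
\item $p>\gamma$, so that the uniform $L^{p/\gamma}_{\rm loc}$ bound on $u^\gamma$ gives $\sup_t\|v_x(t)\|_\infty<\infty$;
\item $p>m$, so that in Lemma \ref{prelim-semigroup-lm1} the factor $(t-s)^{-\frac12-\frac{m}{2p}}$ multiplying $\|u^mv_x\psi\|_{L^{p/m}}$ is integrable.
\end{itemize}
Evaluating $\frac{p+m-1}{p-1}$ at $p=\max\{m,\gamma\}$ gives exactly $\min\{\frac{2m-1}{m-1},\frac{m+\gamma-1}{\gamma-1}\}$. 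Taking $p$ slightly above $\max\{1,m,\gamma\}$ with $\chi<\frac{p+m-1}{p-1}$ then closes this case.
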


\begin{proposition}
[Stability of the positive constant solution]
\label{constant-stability-prop}
Assume that $m,\alpha,\gamma\ge 1$.
\begin{itemize}
\item[(1)] If $\chi\le  0$, then  for any $u_0\in C_{\rm unif}^b(\RR)$ with $u_0(x)\ge 0$
and  $\inf_{x\in\RR} u_0(x)>0$, there holds
\begin{equation}
\label{constant-stability-eq}
\lim_{t\to\infty} \|u(t,\cdot;u_0)-1\|_\infty =0.
\end{equation}

\item[(2]  If  
If  $0<\chi<\frac{1}{2}$  and  $\alpha\ge m+\gamma-1$, then for any $u_0\in C_{\rm unif}^b(\RR)$ with $u_0(x)\ge 0$ and   $\inf_{x\in\RR} u_0(x)>0$,  \eqref{constant-stability-eq} holds.
\end{itemize}
\end{proposition}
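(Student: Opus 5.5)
We prove both parts with a comparison argument built around a pair of ODEs for the spatial supremum and infimum of $u$. We first expand the chemotaxis term. Using $v_{xx}=v-u^\gamma$, the $u$-equation can be written as
$$
u_t=u_{xx}-\chi m u^{m-1}v_xu_x+u\Big(1-u^\alpha-\chi u^{m-1}(v-u^\gamma)\Big).
$$
We then set
$$
\overline u(t)=\sup_x u(t,x),\qquad \underline u(t)=\inf_x u(t,x),
$$
and similarly $\overline v(t)$, $\underline v(t)$. Since $v=(1-\partial_{xx})^{-1}u^\gamma$ is given by convolution with a positive kernel of unit mass, we have $\underline u^\gamma\le v\le \overline u^\gamma$.

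At a point where $u$ is near its supremum or infimum, $u_x\approx 0$ and the second-order term has the good sign. A standard argument, approximating extrema on $\RR$ or comparing with spatially homogeneous super- and subsolutions driven by the given $v$, then yields the following differential inequalities in the Dini-derivative sense:
$$
\overline u'\le \overline u\Big(1-\overline u^\alpha-\chi\overline u^{m-1}(v-\overline u^\gamma)\Big),\qquad
\underline u'\ge \underline u\Big(1-\underline u^\alpha-\chi\underline u^{m-1}(v-\underline u^\gamma)\Big),
$$
evaluated at the corresponding points. We will use the bounds on $v$ to remove the $x$-dependence. By Proposition \ref{global-existence-prop}, the solution is global and bounded, and we have the a priori limsup bounds, so $\overline U:=\limsup_{t\to\infty}\overline u(t)$ is finite.

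\textbf{Case $\chi\le 0$.} Write $\chi=-|\chi|$. The chemotaxis reaction term is $|\chi|u^m(v-u^\gamma)$.
\begin{itemize}
\item At the infimum, $v\ge \underline u^\gamma$, so this term is nonnegative. Hence $\underline u'\ge \underline u(1-\underline u^\alpha)$.
\item Since $\underline u(0)>0$, this gives $\liminf_{t\to\infty}\underline u(t)\ge 1$.
\item Combined with \eqref{upper-limit-eq1}, which says $\limsup\overline u\le 1$, we obtain \eqref{constant-stability-eq} immediately.
\end{itemize}
This case is essentially free.

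\textbf{Case $0<\chi<\frac12$, $\alpha\ge m+\gamma-1$.} Now the signs are unfavorable, and we use an iteration on limsup and liminf. Let $\overline U=\limsup\overline u$ and $\underline U=\liminf \underline u$.

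\emph{Positive lower bound.} We first show $\underline U>0$. At the infimum the chemotaxis term is at least $-\chi\underline u^{m}\overline u^\gamma$, which is $O(\underline u^m)$ with $m\ge 1$. Taking into account that the reaction $\underline u(1-\dots)$ is dominated by the linear part near $0$ requires care: when $m=1$ this needs $\chi \overline U^\gamma<1$. We get this from \eqref{upper-limit-eq2}. The estimate $\chi\overline U^\gamma\le\chi(1-\chi)^{-\gamma/\alpha}<1$ is plausible under $\chi<\frac12$ and $\gamma\le\alpha$. We handle it more cleanly inside the iteration below.

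\emph{Two-sided iteration.} At the supremum we use $v\ge\underline u^\gamma$; at the infimum we use $v\le\overline u^\gamma$. Passing to the limit gives
$$
0\ge 1-\overline U^\alpha+\chi\overline U^{m+\gamma-1}-\chi\overline U^{m-1}\underline U^\gamma,\qquad
0\le 1-\underline U^\alpha+\chi\underline U^{m+\gamma-1}-\chi\underline U^{m-1}\overline U^\gamma .
$$
When $\alpha=m+\gamma-1$ these read $(1-\chi)\overline U^\alpha\ge 1-\chi \overline U^{m-1}\underline U^\gamma$ and $(1-\chi)\underline U^\alpha\le1-\chi\underline U^{m-1}\overline U^\gamma$. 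Subtracting the second from the first gives
$$
(1-\chi)(\overline U^\alpha-\underline U^\alpha)\le \chi\,\overline U^{m-1}\underline U^{m-1}\big(\overline U^{\gamma}\underline U^{1-m}\cdots\big).
$$
We organize this via the mean value theorem into
$$
(1-\chi)(\overline U-\underline U)\le \chi(\overline U-\underline U)\cdot(\text{factor}\le1),
$$
so that $\chi<\frac12$ forces $\overline U=\underline U$. The common value must then equal $1$.

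\emph{Main obstacle.} The hardest step is making this algebra work for general $m,\gamma$ and $\alpha\ge m+\gamma-1$. The difficulty is that the mixed terms $\overline U^{m-1}\underline U^\gamma$ do not factor symmetrically. To deal with this, we plan first to normalize using the bound $\overline U\le(1-\chi)^{-1/\alpha}$ from \eqref{upper-limit-eq2}, together with the constraint $\underline U\le 1\le\overline U$, which follows from the two inequalities. The case $\alpha>m+\gamma-1$ is then reduced to the equality case by monotonicity of $s\mapsto s^{\alpha-(m+\gamma-1)}$ on either side of $1$. If the direct iteration is too lossy, the fallback is a Lyapunov-type comparison with spatially homogeneous ODE pairs $(\overline w,\underline w)$ solving the coupled system above, showing that they converge to $(1,1)$ when $\chi<\frac12$.
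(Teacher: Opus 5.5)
Your Part (1) is correct. Comparing $u$ with the solution of $w'=w(1-w^\alpha)$, $w(0)<\inf u_0$, works because $u\ge w$ forces $v\ge w^\gamma$, so the repulsive term $|\chi|u^m(v-u^\gamma)$ is nonnegative where $u=w$. This is a cleaner alternative to the paper's argument, which uses monotonicity of $\underline u(t)$ plus compactness.

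Part (2), however, has a genuine error at its core: both limiting inequalities have the wrong sign. Put $g(s,r)=1-s^\alpha+\chi s^{m-1}(s^\gamma-r^\gamma)$. From $\overline u'\le \overline u\,g(\overline u,\underline u)$ and $\underline u(t)\ge \underline U-\epsilon$ for large $t$, you can only extract $g(\overline U,\underline U)\ge 0$, not $\le 0$. (Argue at times where $\overline u\to\overline U$ and the Dini derivative has nonnegative limit, or by ODE comparison.) Symmetrically, the liminf gives $\underline U\,g(\underline U,\overline U)\le 0$. As a sanity check, at $\chi=0$ your versions say only $\underline U\le 1\le \overline U$, which carries no information. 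Your later steps are also inconsistent with what you wrote. Subtracting your two inequalities gives $(1-\chi)(\overline U^\alpha-\underline U^\alpha)\ge \chi(\underline U^{m-1}\overline U^\gamma-\overline U^{m-1}\underline U^\gamma)$, which is a lower bound. And the constraint $\underline U\le 1\le \overline U$ ``follows'' only because of the reversed signs.

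With the correct signs your plan goes through, and the ``main obstacle'' is elementary. For $\alpha=k:=m+\gamma-1$ the inequalities read $(1-\chi)\overline U^k\le 1-\chi\overline U^{m-1}\underline U^\gamma$ and $(1-\chi)\underline U^k\ge 1-\chi\underline U^{m-1}\overline U^\gamma$. Subtracting gives
\[
(1-\chi)\big(\overline U^k-\underline U^k\big)\le \chi\big(\underline U^{m-1}\overline U^\gamma-\overline U^{m-1}\underline U^\gamma\big)\le \chi\big(\overline U^k-\underline U^k\big),
\]
since $\underline U^{m-1}\overline U^\gamma\le \overline U^k$ and $\overline U^{m-1}\underline U^\gamma\ge \underline U^k$. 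Hence $(1-2\chi)(\overline U^k-\underline U^k)\le 0$, so $\overline U=\underline U$, and the two inequalities then force the common value to be $1$; no mean value argument is needed.

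For $\alpha>k$, the constraint $\underline U\le 1\le\overline U$ now needs proof. If $\overline U<1$, then $\chi\underline U^{m-1}(\overline U^\gamma-\underline U^\gamma)<\chi(1-\underline U^k)\le \chi(1-\underline U^\alpha)<1-\underline U^\alpha$. That says $g(\underline U,\overline U)>0$, a contradiction. The case $\underline U>1$ is excluded symmetrically. After that, $\overline U^k\le\overline U^\alpha$ and $\underline U^k\ge\underline U^\alpha$ reduce everything to the equality case, as you intended.

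The bound $\underline U>0$ must be proved before this step, since otherwise the $\underline U$-inequality is vacuous. Your estimate for it is not merely plausible: for $m=1$ one has $\gamma\le\alpha$, hence $\chi\overline U^\gamma\le \chi(1-\chi)^{-\gamma/\alpha}\le \chi/(1-\chi)<1$.

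Once repaired, your route is a static version of the paper's proof. The paper (your fallback) traps $u$ between the solutions $\underline U(t),\overline U(t)$ of the time-dependent coupled ODE system and imports that system's convergence to $(1,1)$ from the literature. Your corrected inequalities say that this vector field points inward on the asymptotic rectangle $[\underline U,\overline U]$. That makes the argument self-contained, at the price of the separate positivity step and a careful justification of the limit passage.
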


Next, we state our main results on the existence of traveling  wave solutions of \eqref{main-eq}
connecting $(1,1)$ and $(0,0)$. 

\begin{theorem}[Existence of  traveling waves]
\label{monotone-thm}
$\quad$

\begin{itemize}
\item[(1)] {\rm (Existence of monotone traveling  wave solutions with negative sensitivity)}

 Assume that  $\alpha\le m+\gamma -1$ and $\chi\le 0$. For any   $c>0$ satisfying
\begin{align}
\label{cond-on-c-eq1}
c>c^*_{\chi,m,\gamma}:=\max\Big\{ \frac{1}{m}+m, \frac{1}{\sqrt{m\gamma |\chi|+\gamma^2 |\chi|+\gamma^2}} + \sqrt {m\gamma |\chi| +\gamma^2 |\chi|+\gamma^2}\Big\},
\end{align}
 there is a traveling wave solution
$(u(t,x),v(t,x))=(U^*(x-ct),V^*(x-ct))$ of \eqref{main-eq}  satisfying that
\begin{equation}
\label{wave-eq1}
0<U^*(x)<\max\{1, e^{-\kappa x}\},\quad 
U^*_x(x)\le 0,\quad V^*_x(x)\le 0\quad \forall\, x\in\RR,
\end{equation}
where $\kappa =\frac{c-\sqrt{c^2-4}}{2}$,
and
\begin{equation}
\label{wave-eq2}
\lim_{x\to -\infty}(U^*(x),V^*(x))=(1,1),\quad \lim_{x\to\infty} (U^*(x), V^*(x))=(0,0).
\end{equation}
Moreover, for any $\kappa_1$ satisfying $\kappa<\kappa_1<  \min\{(1+\alpha)\kappa,  m\kappa +1/2,  1\}$, 
 it holds that
\begin{equation}
\label{wave-eq3}
\lim_{x\to\infty} e^{(\kappa_1-\kappa)x} \Big(\frac{U^*(x)}{e^{-\kappa x}}-1\Big)=0.
\end{equation}

\item[(2)] {\rm (Existence of traveling wave solutions with positive sensitivity)}

Assume  that  $\alpha=  m+\gamma -1$ and $0\le \chi<\min\{\frac{1}{2},\chi^*\}$, where
\begin{equation}
\label{chi-star-eq}
\chi^*=\chi^*(m,\alpha,\gamma):=\min\Big\{1, \frac{2m+2\gamma}{m^2+m+2\gamma}\Big\}.
\end{equation}
 For any  $c>2$  there is a traveling wave solution 
$(u(t,x),v(t,x))=(U^*(x-ct),V^*(x-ct))$ of \eqref{main-eq} satisfying 
 \eqref{wave-eq2},  \eqref{wave-eq3}, and
\end{itemize}
\begin{equation}
\label{wave-eq4}
0<U^*(x)<\min\Big\{\big(\frac{1}{1-\chi}\big)^{1/\alpha}, e^{-\kappa x}\Big\}\quad \forall\, x\in\RR.
\end{equation}
\end{theorem}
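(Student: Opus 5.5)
We follow the standard sub/super-solution scheme of Salako--Shen, carried out in the moving frame $\xi=x-ct$. The traveling wave equation for $U$, with $V=V(\cdot;U)$ the unique bounded solution of $V''-V+U^\gamma=0$, reads
\begin{equation*}
U''+cU'-\chi(U^mV')'+U(1-U^\alpha)=0 .
\end{equation*}
Set $\kappa=\frac{c-\sqrt{c^2-4}}{2}$, so that $\kappa^2-c\kappa+1=0$.

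\textbf{Candidate sub- and super-solutions.} We take
\begin{equation*}
\overline U(x)=\min\{\overline M,e^{-\kappa x}\},\qquad
\underline U(x)=\max\{0,\,e^{-\kappa x}-d\,e^{-\kappa_1x}\},
\end{equation*}
with $\kappa<\kappa_1<\min\{(1+\alpha)\kappa,m\kappa+1/2,1\}$ and $d\gg1$. Here $\overline M=1$ in part (1) and $\overline M=(1-\chi)^{-1/\alpha}$ in part (2).

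We then define the convex closed set
\begin{equation*}
\mathcal E=\{U\in C^b_{\rm unif}(\RR):\ \underline U\le U\le \overline U\}.
\end{equation*}
In part (1) we additionally require $U$ to be nonincreasing. For $U\in\mathcal E$ we solve the linear problem
\begin{equation*}
W_t=W_{xx}+cW_x-\chi U^m V_xW_x-\chi\,\frac{m U^{m-1}U_x V_x+U^m(V-U^\gamma)}{U}\,W+W(1-W^\alpha)
\end{equation*}
(or the analogous split $-\chi(mU^{m-1}V_x W_x + U^{m-1}(V-U^\gamma)W)$, writing the nonlinear term so that comparison applies). We start from $W(0)=\overline U$, let $t\to\infty$, and obtain a map $U\mapsto \Phi(U)$ into $\mathcal E$. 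A fixed point then gives the wave, via Schauder's theorem in the compact-open topology; continuity and compactness follow from parabolic estimates, as in \cite{SaSh2017-1,SaSh2020}.

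\textbf{Key estimates, in order.}

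First we bound $V$ for $U\in\mathcal E$: $V\le \overline M^\gamma$, and $V\le C e^{-\gamma\kappa x}$ whenever $\gamma\kappa<1$. For $V_x$ we use
\begin{equation*}
|V_x|\le \tfrac12\int e^{-|x-y|}U^\gamma(y)\,dy .
\end{equation*}
In part (1), monotonicity of $U$ gives $V_x\le 0$, and also $V-U^\gamma\ge 0$ where $U$ is decreasing.

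Second we verify that $\overline U$ is a super-solution. On $\{e^{-\kappa x}<\overline M\}$ the linear part vanishes because $\kappa^2-c\kappa+1=0$. The chemotaxis term must then be absorbed using the sign of $\chi$. In part (1) we have $\chi\le0$, $V_x\le0$, and $U^{m}(V-U^{\gamma})$ contributes a term of the correct sign when $\alpha\le m+\gamma-1$. What remains is the drift term $-\chi U^mV_x\overline U_x$, which requires $\kappa$ small relative to $m$ and $\gamma|\chi|$. Exactly this produces the conditions $c>\frac1m+m$ and $c>\frac1{\sqrt{m\gamma|\chi|+\gamma^2|\chi|+\gamma^2}}+\sqrt{m\gamma|\chi|+\gamma^2|\chi|+\gamma^2}$: they say that $\kappa<1/m$ and $\kappa^2(m\gamma|\chi|+\gamma^2|\chi|+\gamma^2)<1$, respectively. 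In part (2), on the constant piece the inequality $\overline M(1-\overline M^\alpha)+\chi \overline M^{m}(V-\overline M^\gamma)\le0$ reduces to $1-(1-\chi)\overline M^\alpha\le0$ when $\alpha=m+\gamma-1$. The exponential piece is handled with $\chi<\chi^*$, and the constant $\frac{2m+2\gamma}{m^2+m+2\gamma}$ comes from bounding $|V_x|$ and $V$ by $\frac{\gamma\kappa}{1-\gamma^2\kappa^2}$-type multiples of $e^{-\gamma\kappa x}$.

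Third we verify that $\underline U$ is a sub-solution on its positive region. The term $-d(\kappa_1^2-c\kappa_1+1)e^{-\kappa_1x}>0$ dominates the error terms $e^{-(1+\alpha)\kappa x}$ and the chemotaxis errors of order $e^{-(m\kappa+\gamma\kappa)x}$ and $e^{-(m\kappa+1/2)x}$-type, provided $d$ is large. This is precisely why $\kappa_1$ is restricted as stated.

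\textbf{Conclusion and the main obstacle.} The fixed point $U^*$ satisfies $\underline U\le U^*\le\overline U$, which yields the decay statement \eqref{wave-eq3}, since $e^{(\kappa_1-\kappa)x}|U^*e^{\kappa x}-1|\le d\,e^{(\kappa_1-\kappa)x}e^{-(\kappa_1-\kappa)x}\cdot e^{-\epsilon x}$ after shrinking $\kappa_1$ slightly. It also gives $U^*(\infty)=0$ and $U^*>0$. The limit $U^*(-\infty)=1$ follows from Proposition \ref{constant-stability-prop}: $\liminf_{x\to-\infty}U^*>0$ because of the lower bound combined with the spreading argument, and then translates of the wave converge to a positive entire solution bounded away from zero, which must equal $1$. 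Monotonicity in part (1) is preserved by the map, because with $\chi\le0$ the comparison principle applied to the $x$-translates $W(t,x+h)$ keeps $W$ nonincreasing. The main obstacle is the super-solution inequality for $\overline U$ in the chemotaxis regime. There the nonlocal quantities $V$ and $V_x$ have to be bounded sharply enough to recover the explicit thresholds, and we would first try explicit kernel integrals against $\min\{\overline M^\gamma,e^{-\gamma\kappa x}\}$.
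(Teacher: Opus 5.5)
Your overall architecture matches the paper's: an upper barrier $\min\{\overline M,e^{-\kappa x}\}$, a lower barrier built from $e^{-\kappa x}-De^{-\tilde\kappa x}$, a frozen parabolic problem started at the upper barrier and nonincreasing in $t$, Schauder in the compact-open topology, and Proposition \ref{constant-stability-prop} at $-\infty$. However, the frozen problem you write down is the wrong one, and the barrier computations fail for it.

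\emph{Your first form.} It does not reduce to the wave equation at a fixed point: the $U_xV_x$ term is counted twice, once with $U^m$ and once with $mU^{m-1}$. It also needs $U_x$ and $1/U$ for a merely continuous $U$ that may vanish.

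\emph{Your second form.} It freezes $U^{m-1}$ and $U^\gamma$, and this breaks both barriers.
\begin{itemize}
\item For $\chi<0$, $m=1$, the constant $W\equiv1$ produces $|\chi|(V-U^\gamma)$. This is positive just to the right of a steep drop of a nonincreasing $U$, so $\overline U$ is not a super-solution. Your claim that $V\ge U^\gamma$ for decreasing $U$ is false: $U^\gamma=\mathbf 1_{(-\infty,0)}$ gives $V=1-\tfrac12e^{x}<1$ on $x<0$.
\item The absorbing term $-|\chi|U^{m+\gamma-1}W$ is evaluated at the frozen $U$. That $U$ can be nearly $0$ where $V$ is of order one, so for $|\chi|$ large no $\kappa$ makes $e^{-\kappa x}$ a super-solution.
\end{itemize}

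\emph{The missing idea.} Freeze only $V=\Psi(\cdot;u^\gamma,1,1)$, and write $V_{xx}=V-W^\gamma$ with the unknown $W$, i.e.
\begin{equation*}
\mathcal A(W;u)=W_{xx}+cW_x-\chi mW^{m-1}V_xW_x+W\big(1-\chi W^{m-1}V-W^\alpha+\chi W^{m+\gamma-1}\big),
\end{equation*}
which equals the wave operator at $W=u$. For $\chi\le0$ this gives $\mathcal A(1;u)=|\chi|(V-1)\le0$. At $W=e^{-\kappa x}$, the term $-|\chi|e^{-(m+\gamma)\kappa x}$ cancels the leading part of the bound $-\kappa m|\chi|V_x+|\chi|V\le|\chi|\frac{1+m\gamma\kappa^2}{1-\gamma^2\kappa^2}e^{-\gamma\kappa x}$. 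What remains is the condition $|\chi|\frac{(m+\gamma)\gamma\kappa^2}{1-\gamma^2\kappa^2}\le1$, which is exactly $\kappa^2(\gamma^2+\gamma^2|\chi|+m\gamma|\chi|)\le1$.

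\emph{Monotonicity.} This should come from differentiating, not from comparing translates, since there is no order between the drift coefficients $V_x(x+h)$ and $V_x(x)$. The function $w=W_x$ satisfies a linear differential inequality because the extra terms $\big(|\chi|m(m-1)W^{m-2}w^2+|\chi|W^m\big)V_x$ are $\le0$. Hence $w\le0$.

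In part (2) there is a second gap, at $-\infty$. Your $\underline U=\max\{0,e^{-\kappa x}-de^{-\kappa_1x}\}$ vanishes on a left half-line, and for $\chi>0$ the wave is not known to be monotone. So nothing prevents $U^*(x_n)\to0$ along some $x_n\to-\infty$. The limit of translates is then $0$, which Proposition \ref{constant-stability-prop} does not rule out. The spreading estimates you appeal to are only known for $m=\alpha=\gamma=1$.

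The paper avoids this in two steps:
\begin{itemize}
\item Lemma \ref{prelim-sub-lm}(2) shows that small constants are sub-solutions of $W_t=\mathcal A(W;u)$.
\item It uses the $C^1$ barrier $U^-_{\kappa,\tilde\kappa,D}$, equal to $\max_y(e^{-\kappa y}-De^{-\tilde\kappa y})$ for $x\le x^+_{\kappa,\tilde\kappa,D}$ and to $e^{-\kappa x}-De^{-\tilde\kappa x}$ beyond. Then $U^*\ge U^->0$ on $\RR$, and $\inf_{x\le0}U^*>0$ is immediate.
\end{itemize}

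Finally, the exponential piece in part (2) cannot be handled with $\frac{1}{1-\gamma^2\kappa^2}$-type bounds. Only $c>2$ is assumed there, so $\gamma\kappa\ge1$ is allowed. The paper uses
\begin{equation*}
\kappa mV_x-V=-\tfrac{m\kappa+1}{2}e^{-x}\int_{-\infty}^{x}e^{y}u^\gamma\,dy+\tfrac{m\kappa-1}{2}e^{x}\int_{x}^{\infty}e^{-y}u^\gamma\,dy\le\frac{(m\kappa-1)_+}{2(1+\gamma\kappa)}e^{-\gamma\kappa x},
\end{equation*}
together with $-W^{1+\alpha}+\chi W^{m+\gamma}=-(1-\chi)e^{-(1+\alpha)\kappa x}$ at $W=e^{-\kappa x}$. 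This combination is what produces $\chi^*$.
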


Theorem \ref{monotone-thm} will be proved in Section \ref{existence-section}.
The following theorem is about the stability of traveling wave solutions and will be proved in Section \ref{uniqueness-stability-section}.

\begin{theorem}[Stability of traveling waves connecting constant solutions]
\label{stability-thm}
$\quad$
Assume  that $\chi< 0$ and $\alpha\le m+\gamma-1$, or $0\le \chi<\chi^*$ and $\alpha= m+\gamma-1$.
Let $M_\chi=1$ if $\chi< 0$ and $M_\chi=\big(\frac{1}{1-\chi}\big)^{1/\alpha}$ if $0\le \chi<\chi^*$.
There is $c^{**}_{\chi,m,\alpha,\gamma}>1+|\chi|^{1/6}+\frac{1}{1+|\chi|^{1/6}}$ with $c^{**}_{\chi,m,\alpha,\gamma}-\big(\gamma+\frac{1}{\gamma}\big)=O(|\chi|^{1/6})$ as $\chi\to 0$ such that  the following hold:
 If $c>c_{\chi,m,\alpha,\gamma}^{**}$ and  
$(u(t,x),v(t,x))=(U^*(x-ct),V^*(x-ct))$ is a traveling wave solution of \eqref{main-eq} connecting $(1,1)$ and $(0,0)$ with speed $c$ and satisfying
\begin{equation}
\label{decay-rate-eq1}
0<U^*(x)<\min\{M_{\chi},e^{-\kappa x}\},\,\, {\rm and}\,\, \lim_{x\to\infty} e^{(\kappa_1-\kappa)x} \Big(\frac{U^*(x)}{e^{-\kappa x}}-1\Big)=0
\end{equation}
for some $\kappa_1\in (\kappa,1)$,  where $\kappa=\frac{c-\sqrt{c^2-4}}{2}$,
then 
 for any $\eta\in (\kappa,\frac{1}{1+|\chi|^{1/6}})$  and    any $u_0\in C_{\rm unif}^b(\RR)$ satisfying that
 \begin{equation}
\label{stability-u0-eq1}
u_0\ge 0,\quad
\liminf_{x\to -\infty}u_0(x)>0,\quad {\rm and}\quad \int_{\RR} e^{2\eta x}|u_0(x)-U^*(x)|^2<\infty,
\end{equation} 
 there hold
\begin{equation}
\label{main-stability-eq1}
\lim_{t\to\infty}\int_{\RR} e^{2\eta x}|u(t,x;u_0)-U^*(x-ct)|^2dx=0,
\end{equation}
and 
\begin{equation}
\label{main-stability-eq2}
\lim_{t\to\infty} \sup_{x\in\RR} |u(t,x;u_0)-U^*(x-ct)|=0.
\end{equation}
\end{theorem}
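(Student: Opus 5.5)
We work in the moving frame $z=x-ct$ and set $w(t,z)=u(t,z+ct;u_0)-U^*(z)$, with $\tilde v(t,z)=v(t,z+ct;u_0)-V^*(z)$. Then $\tilde v=(I-\partial_{zz})^{-1}\big((U^*+w)^\gamma-(U^*)^\gamma\big)$, and $w$ satisfies
\begin{equation*}
w_t=w_{zz}+cw_z-\chi\Big((U^*+w)^m(V^*_z+\tilde v_z)-(U^*)^mV^*_z\Big)_z+f(U^*+w)-f(U^*),
\end{equation*}
where $f(u)=u(1-u^\alpha)$. We conjugate with the weight: $W=e^{\eta z}w$. This converts the linear part into $W_{zz}+(c-2\eta)W_z+(\eta^2-c\eta+1)W$ plus the linearized reaction $(f'(U^*)-1)W$ and the chemotaxis terms. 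Since $\kappa<\eta<\frac1{1+|\chi|^{1/6}}<1$ and $\eta^2-c\eta+1<0$ for $\eta\in(\kappa,1/\kappa)$, we obtain a coercive constant $\delta(\eta,c)=-(\eta^2-c\eta+1)>0$.

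The plan is a weighted $L^2$ energy estimate,
\begin{equation*}
\frac{d}{dt}\frac12\int W^2=-\int W_z^2-\delta\int W^2+\int e^{2\eta z}\big(f(U^*+w)-f(U^*)-w\big)w+\text{(chemotaxis terms)}.
\end{equation*}
The reaction term is nonpositive up to higher-order pieces, because $f(u)-f(U^*)-(u-U^*)=-(u^{1+\alpha}-(U^*)^{1+\alpha})$ has the sign of $-w$ when $u,U^*\ge0$. It is therefore $\le 0$ and needs no smallness.

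For the chemotaxis terms we integrate by parts, which moves one derivative onto $e^{2\eta z}w$ and produces $\int e^{2\eta z}(W_z-\eta W)\,e^{-\eta z}\big[(U^*+w)^m(V_z^*+\tilde v_z)-(U^*)^mV^*_z\big]$. The bracket splits as $\big((U^*+w)^m-(U^*)^m\big)V^*_z+(U^*+w)^m\tilde v_z$. To control these we need a priori sup bounds on $u$, namely $u\le M$ eventually, from Proposition \ref{global-existence-prop}. We also need a weighted elliptic estimate: for $\eta<1$, the kernel $\frac12e^{-|z|}$ gives $\|e^{\eta z}\tilde v_z\|_{L^2}\le C(1-\eta)^{-1}\|e^{\eta z}((U^*+w)^\gamma-(U^*)^\gamma)\|_{L^2}\le C'\|W\|_{L^2}$. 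This is where $\eta<1$ is essential, and the gap $1-\eta\ge |\chi|^{1/6}/(1+|\chi|^{1/6})$ enters. Using Young's inequality, the chemotaxis contributions are bounded by $|\chi|C(1-\eta)^{-k}(\int W_z^2+\int W^2)$ with constants depending on $M$, $m$ and $\gamma$, and $(1-\eta)^{-k}\lesssim |\chi|^{-k/6}$. Choosing $c^{**}$ so large that $\delta(\eta,c)$ and the diffusion absorb these terms gives $\frac{d}{dt}\|W\|^2\le-\epsilon\|W\|^2$ after some time $T$, which proves \eqref{main-stability-eq1}. The explicit form of $c^{**}$, of order $\gamma+1/\gamma+O(|\chi|^{1/6})$, comes from balancing these powers of $|\chi|$ against $\delta$. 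The $\gamma+1/\gamma$ presumably arises because $\tilde v$ involves $U^{*\gamma}\sim e^{-\gamma\kappa z}$ decay requirements, so one needs $\gamma\kappa<1$ or a similar condition.

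To get \eqref{main-stability-eq2}, we split space. On $z\ge -L$ the weight is bounded below, so weighted $L^2$ decay together with uniform parabolic $C^1$ bounds (interpolation) yields uniform convergence. On $z\le -L$ we use $\liminf_{x\to-\infty}u_0>0$: a comparison/persistence argument together with Proposition \ref{constant-stability-prop}, applied to the shifted solution on the left half-line, shows $u\to1$ there. Since $U^*\to1$ as $z\to-\infty$, this gives the result.

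The main obstacle is the a priori control needed for the nonlinear terms. The energy estimate must handle $w$ not small in $L^\infty$, which is only available from the global bounds, while the weight $e^{2\eta z}$ grows at $+\infty$ where $u$ may have heavier tails. We first handle this by noting that finiteness of the weighted norm is propagated by a Gronwall estimate on finite intervals. We then take $T$ large enough that $u\le M_\chi+\epsilon$, so all constants are uniform.
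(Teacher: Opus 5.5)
Your architecture is the paper's. You use a weighted $L^2$ energy estimate for $W=e^{\eta z}(u-U^*)$ in the moving frame and discard the monotone part $-(u^{1+\alpha}-(U^*)^{1+\alpha})$ of the reaction. The $(1-\eta)^{-1}$ factors in the weighted elliptic bound are controlled by $\eta<\frac{1}{1+|\chi|^{1/6}}$. Uniform convergence then comes from splitting space, with Proposition \ref{constant-stability-prop} used on the far left. Your left persistence step is only named; the paper obtains it from a Morrey bound at a fixed time, the asymptotics in \eqref{decay-rate-eq1}, and the sub-solution of Remark \ref{prelim-sub-rk2}.

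The real difference is your treatment of the chemotaxis term. You keep it in divergence form and integrate by parts; the derivative of $e^{2\eta z}w$ is $e^{\eta z}(W_z+\eta W)$, not $W_z-\eta W$. As a result you need only $\|V^*_z\|_\infty$ and sup bounds on $u$ and $U^*$. The paper instead expands $(u^mv_x)_x$, which creates $b_2=mU^*_xv_xa_{m-1}$ and $b_3=m(U^*)^{m-1}U^*_x$. That forces the derivative bounds of Lemmas \ref{estimate-lm1} and \ref{estimate-lm2}, and these are the true source of the $\gamma$ in $c^{**}$ and of $c>m|\chi|M_\chi^{m+\gamma-1}+|\chi|^{1/6}$, not decay of $\tilde v$ as you guessed. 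Your route needs neither.

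The gap is the absorption step. For fixed $c$,
$$\delta(\eta,c)=c\eta-\eta^2-1=(\eta-\kappa)(\kappa^{-1}-\eta)\to 0 \quad\text{as } \eta\downarrow\kappa.$$
Your chemotaxis error, after Young's inequality, is $\big(|\chi|\eta K+\tfrac12|\chi|^2K^2\big)\int W^2$, where $K>0$ is the constant in $\|e^{\eta z}[\cdots]\|_{L^2}\le K\|W\|_{L^2}$. This $K$ does not shrink as $\eta\downarrow\kappa$.

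Enlarging $c^{**}$ does not help. $\kappa$ moves with $c$, and the theorem fixes $c$ first and then quantifies over every $\eta\in(\kappa,\frac{1}{1+|\chi|^{1/6}})$. For $\eta$ near $\kappa$ the coefficient on the right of your differential inequality is positive, so no decay follows. You also cannot pass to a larger $\eta$, because the hypothesis $\int e^{2\eta x}|u_0-U^*|^2<\infty$ weakens as $\eta$ decreases.

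When $\chi\neq 0$, what your argument actually proves is decay only for $\eta$ above the smaller root $\kappa^-$ of $\eta^2-(c-A)\eta+1+B$, where $A,B>0$ are your constants, and $\kappa^->\kappa$ strictly. In addition, $c^{**}$ must make this quadratic negative at $\eta=\frac{1}{1+|\chi|^{1/6}}$, or the admissible range is empty.

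The paper's Step 3 has the same defect. It asserts $\kappa^-_{c,\chi,m,\alpha,\gamma}\le\kappa$ in \eqref{eta-eq}. However, the quadratic in \eqref{U-square-eq1} equals $|\chi|^{1/2}(\kappa\widetilde D'_\chi+\widetilde D''_\chi)>0$ at $\eta=\kappa$. Moreover $\kappa<1<\tfrac12\big(c-|\chi|^{1/2}D'_{\chi,m,\alpha,\gamma}\big)$ for $c>c_{3,\chi,m,\alpha,\gamma}$, so in fact $\kappa<\kappa^-$. Either the range of $\eta$ must be shrunk as above, or a genuinely new ingredient is needed to reach $\eta$ close to $\kappa$.
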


\begin{theorem}[Uniqueness of traveling waves connecting constant solutions]
\label{uniqueness-thm}
$\quad$
Assume  that $\chi< 0$ and $\alpha\le m+\gamma-1$, or $0\le \chi<\chi^*$ and $\alpha= m+\gamma-1$.
Let $M_\chi=1$ if $\chi< 0$ and $M_\chi=\big(\frac{1}{1-\chi}\big)^{1/\alpha}$ if $0\le \chi<\chi^*$. Suppose that  $c>c^{**}_{\chi,m,\alpha,\gamma}$ and 
$(u_i(t,x),v_i(t,x))=(U_i^*(x-ct),V_i^*(y-ct))$ $(i=1,2)$ are two  traveling wave solution of \eqref{main-eq} connecting $(1,1)$ and $(0,0)$ with speed $c$ and satisfying
\begin{equation}
\label{decay-rate-eq2}
0<U_i^*(x)<\min\{M_{\chi},e^{-\kappa x}\},\,\, {\rm and}\,\, \lim_{x\to\infty} e^{(\kappa_1-\kappa)x} \Big(\frac{U_i^*(x)}{e^{-\kappa x}}-1\Big)=0
\end{equation}
for $i=1,2$ and some $\kappa_1\in (\kappa,1)$, then
$$
(U_1^*(x),V_1^*(x))\equiv (U_2^*(x),V_2^*(x)).
$$
\end{theorem}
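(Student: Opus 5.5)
The plan is to deduce uniqueness from the stability result, Theorem \ref{stability-thm}, by using one wave as initial data for the Cauchy problem and tracking it in the frame of the other wave.

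\textbf{Step 1 (choice of initial data).} Set $u_0(x):=U_1^*(x)$. Then $u(t,x;u_0)=U_1^*(x-ct)$ by uniqueness of classical solutions of the Cauchy problem, since $U_1^*\in C^b_{\rm unif}(\RR)$; boundedness and uniform continuity follow from $0<U_1^*<M_\chi$ and the equation. The wave $U_2^*$ plays the role of $U^*$ in Theorem \ref{stability-thm}.

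\textbf{Step 2 (checking \eqref{stability-u0-eq1}).} Clearly $u_0\ge 0$, and $\liminf_{x\to-\infty}u_0(x)=1>0$. For the weighted integrability, fix $\eta\in(\kappa,\frac{1}{1+|\chi|^{1/6}})$ with $\eta<\kappa_1$ as well. This is possible because the interval for $\eta$ is nonempty: the condition $c>c^{**}_{\chi,m,\alpha,\gamma}>1+|\chi|^{1/6}+\frac{1}{1+|\chi|^{1/6}}$ gives $\kappa<\frac1{1+|\chi|^{1/6}}$.
\begin{itemize}
\item On $x\le 0$: $|U_1^*-U_2^*|\le 2M_\chi$ and $e^{2\eta x}$ is integrable there.
\item On $x\ge 0$: by \eqref{decay-rate-eq2}, $U_i^*(x)=e^{-\kappa x}(1+o(e^{-(\kappa_1-\kappa)x}))$, so $|U_1^*-U_2^*|=o(e^{-\kappa_1 x})$. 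Hence $e^{2\eta x}|U_1^*-U_2^*|^2=o(e^{-2(\kappa_1-\eta)x})$, which is integrable.
\end{itemize}
This step is the only real point: the decay condition in \eqref{decay-rate-eq2} is exactly what makes the difference lie in the weighted space, and $\eta$ must be chosen in $(\kappa,\min\{\kappa_1,\frac{1}{1+|\chi|^{1/6}}\})$.

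\textbf{Step 3 (conclusion).} By \eqref{main-stability-eq2}, $\sup_x|U_1^*(x-ct)-U_2^*(x-ct)|\to 0$ as $t\to\infty$. The quantity $\sup_x|U_1^*(x-ct)-U_2^*(x-ct)|$ equals $\sup_y|U_1^*(y)-U_2^*(y)|$, which is independent of $t$. It therefore vanishes, so $U_1^*\equiv U_2^*$. Finally, $V_i^*$ is the unique bounded solution of $V''-V+(U_i^*)^\gamma=0$, namely $V_i^*=\frac12\int e^{-|x-y|}(U_i^*(y))^\gamma dy$. Hence $V_1^*\equiv V_2^*$.
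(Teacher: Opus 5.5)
Your proposal is correct and follows the paper's proof, which also deduces uniqueness directly from Theorem \ref{stability-thm} by using the decay condition \eqref{decay-rate-eq2} to place $U_1^*-U_2^*$ in the weighted space $\int_{\RR} e^{2\eta x}|\cdot|^2<\infty$ for some $\eta\in(\kappa,\frac{1}{1+|\chi|^{1/6}})$. Your version fills in two details the paper's brief proof leaves implicit: the integrability actually requires $\eta<\kappa_1$ as well, and concluding through the translation-invariant sup-norm statement \eqref{main-stability-eq2} avoids any ambiguity about which frame the weighted $L^2$ convergence is taken in.
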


Theorem \ref{uniqueness-thm} will be proved in Section \ref{uniqueness-stability-section}. 

\subsection{Remarks}

In this subsection, we give some remarks/discussions on our main results and the techniques established in the proofs of the results.

\begin{remark}
\label{rk1}
{\it This remark is about the global existence, boundedness, and stabilization  of positive classical solutions of \eqref{main-eq}.}

\begin{itemize}

\item[(1)] ({\it Global existence and boundedness}) 
The results stated in Proposition \ref{global-existence-prop} are new for general $m,\alpha,\gamma\ge 1$.
When $m=\alpha=\gamma=1$, part (1) of this proposition is proved in  \cite[Theorem A]{SaSh2019} and part (2) is proved in \cite[Theorem 1.4]{HaShZh2024}. 
We extend the arguments in  \cite[Theorrm 1.4]{HaShZh2024} and \cite[Theorem A]{SaSh2019} for the special case $m=\alpha=\gamma=1$ 
with $\chi>0$ and $\chi<0$ to prove Proposition \ref{global-existence-prop}  for the case $\chi>0$ and $\chi<0$, respectively. Such extension can also be applied for \eqref{main-eq} in higher dimensional space as well as some other chemotaxis models.

\item[(2)] ({\it Stabilization}) The results stated in Proposition \ref{constant-stability-prop} are also new for general $m,\alpha,\gamma\ge 1$, and are utilized in the proof of $\lim_{x\to -\infty}(u(t,x),v(t,x))=(1,1)$ for the traveling wave solutions in Theorem \ref{monotone-thm} and  in the proof of stability of traveling wave solutions. When $m=\alpha=\gamma=1$,  Proposition \ref{constant-stability-prop} is proved in \cite[Theorem B]{SaSh2019}. But it is not easy to extend the arguments of  \cite[Theorem B]{SaSh2019} for the case $m=\alpha=\gamma=1$ to the general case $m,\alpha,\gamma\ge 1$.
Our idea to prove Proposition \ref{constant-stability-prop} in the case $\chi<0$  can be describe as follows: let
$\bar u(t)=\sup_{x\in\RR} u(t,x;u_0)$ and $\underline u(t)=\inf_{x\in\RR}u(t,x;u_0)$ and prove that if $\bar u(t_0)>1$ (resp. $0<\underline u(t_0)<1$) for some $t_0>0$, then
$\bar u(t)$ (resp. $\underline u(t)$) is decreasing (resp. increasing) on $[0,t_0)$, which implies that
$
\lim_{t\to\infty}
\|u(t,\cdot;u_0)-1\|_\infty=0.
$
This idea is new and can be applied to  \eqref{main-eq} in higher dimensional space as well as some other chemotaxis models.  We adopt the following rectangle/ODE approach to prove Proposition \ref{constant-stability-prop} in the case $0<\chi<1/2$:  Consider the following system of ODEs,
$$
\begin{cases}
\overline U_t=\chi \overline U^m (\overline U^\gamma -\underline U^\gamma)+ \overline U(1-\overline U^\alpha)\cr
\underline U_t=\chi \underline U^m (\underline U^\gamma-\overline U^\gamma)+\underline U(1-\underline U^\alpha)\cr
\overline U(0)=\max\{\|u_0\|_\infty, 1\}+\epsilon,\quad \underline U(0)=\min\{\inf_{x\in\RR}u_0(x),1\}-\epsilon
\end{cases}
$$
for some $0<\epsilon\ll 1$, and prove that
\begin{equation*}
0<\underline U(t)\le u(t,x;u_0)\le \overline U(t)\quad \forall\, t\in [0,\infty),\,\, x\in\RR,
\end{equation*}
and
\begin{equation*}
\lim_{t\to\infty} \overline U(t)=\lim_{t\to\infty}\underline U(t)=1
\end{equation*}
which implies that $
\lim_{t\to\infty}
\|u(t,\cdot;u_0)-1\|_\infty=0.
$
This approach is an extension of the rectangle /ODE approach developed in \cite[Theorem 1.2]{GaSaTe} for  chemotaxis models of the form  \eqref{main-eq} on bounded domains to
bounded domains, and  can also be applied for \eqref{main-eq} in higher dimensional space as well as some other chemotaxis models.
\end{itemize}

\end{remark}

\begin{remark}
\label{rk2} 
{\it This remark is about traveling wave solutions of \eqref{main-eq} connecting $(1,1)$ and $(0,0)$ when $\chi\le 0$. }

\begin{itemize}

\item[(1)] ({\it Monotonicity and exponential decay rate of wave profiles}).
The monotonicity and exponential decay rate of traveling wave solutions of \eqref{main-eq} with $\chi<0$ are  proved for the first time even when $m=\alpha=\gamma=1$. 

\item[(2)] ({\it  Lower bound  $c^*_{\chi,m,\gamma}$ of wave speeds}). 
The lower bound $c^*_{\chi,m,\gamma}$ established in Theorem \ref{monotone-thm}(1) for the  wave speeds in the case $\chi\le 0$  depends only on $\chi,m$, and $\gamma$, and is
not  expected to be optimal, but it is close to be optimal in certain sense. 
 
To be more precise, 
suppose that $\chi<0$ and $(u(t,x),v(t,x))=(\tilde U(x-ct),\tilde V(x-ct))$ is a traveling wave solution of \eqref{main-eq}
connecting $(1,1)$ and $(0,0)$ with speed $c$.  Then $(\tilde U,\tilde V)$ satisfy
$$
\begin{cases}
\tilde U_{xx}+c\tilde U_x-\chi (\tilde U^m \tilde V_x)_x+\tilde U(1-\tilde U^\alpha)=0,\quad &x\in\RR\cr
\tilde V_{xx}-\tilde V+\tilde U^\gamma=0,\quad &x\in\RR.
\end{cases}
$$
Let $U(x)=\tilde U(\sqrt{\frac{|\chi|\gamma}{m+\gamma}} x),\quad V(x)=\tilde V(\sqrt{\frac{|\chi|\gamma}{m+\gamma}} x),\quad c=\sqrt{\frac{|\chi|\gamma}{m+\gamma}}\bar c_\chi.
$
Then $(U(x),V(x))$ satisfy
$$
\begin{cases}
\frac{1}{|\chi|} \frac{m+\gamma}{\gamma} U_{xx}+ \bar c_\chi  U_x +\frac{m+\gamma}{\gamma} (U^mV_x)_x+U(1-U^\alpha)=0,\quad &x\in\RR\cr
\frac{1}{\sqrt{|\chi|}}\frac{m+\gamma}{\gamma}V_{xx}-V+U^\gamma=0,\quad &x\in\RR,
\end{cases}
$$
which formally converges to
\begin{equation}
\label{porous-eq}
\bar c  U_x+ (U^{m+\gamma})_{xx}+U(1-U^\alpha)=0,\quad x\in\RR
\end{equation}
as $\chi\to  -\infty$ provided that $\bar c_{\chi}\to \bar c$. Note that solutions of  \eqref{porous-eq} give rise to traveling wave solutions of the following  porous medium equation with logistic type source,
\begin{equation}
\label{porous-eq1}
U_t= (U^{m+\gamma})_{xx}+U(1-U^\alpha),\quad x\in\RR.
\end{equation}
 Note also that \eqref{porous-eq1}
 admits a minimal wave speed $c_\infty^*(m,\gamma,\alpha)>0$ (see \cite{Aro1979, AtRo, Biro}, etc.). Therefore, formally, it is expected that
$$
\liminf_{\chi\to -\infty}\frac{c^*_{\chi,m,\gamma}}{\sqrt{|\chi|}}\ge c_p^*(m,\gamma,\alpha)\sqrt{\frac{\gamma}{m+\gamma}},\,\, \text{in particular},\,\, \frac{c^*_{\chi,m,\gamma}}{\sqrt{|\chi|}}=O(1)\,\,\, {\rm as}\,\, \chi\to -\infty,
$$
which is proved to be true for the case that $m=\gamma=1$ (see \cite[Theorem 2.1]{GrHeTu2023}).

Note that
\begin{equation}
\label{speed-limit-eq}
\lim_{\chi \to -\infty}\frac{c^*_{\chi,m,\gamma}}{\sqrt {|\chi|}}=\sqrt {m\gamma +\gamma^2}.
\end{equation}
The lower bound $c^*_{\chi,m,\gamma}$ for the wave speeds  is hence  close to optimal in the sense that 
when $\chi=0$ and $\gamma=1$, $c^*_{\chi,m,\gamma}=2$,
which is the optimal lower bound for the wave speeds of \eqref{fisher-kpp-eq}, and when $\chi\to -\infty$, 
$\frac{c^*_{\chi,m,\gamma}}{\sqrt{|\chi|}}=O(1)$.

\item[(3)]({\it Speed-up of wave speeds by strong negative sensitivity}). When $m=\alpha=\gamma=1$, by \cite[Theorem 2.1]{GrHeTu2023}, strong negative chemotaxis sensitivity speeds up the wave speeds. We conjecture that 
strong negative chemotaxis sensitivity  also speeds up the wave speeds for general $m,\alpha,\gamma\ge 1$. 
Thanks to the length of the paper, we do not carry out a study on this issue in this paper.

\item[(4)] ({\it Approaches/techniques}). We   prove  Theorem \ref{monotone-thm}(1) by properly modified arguments, in particular,  super- and sub-solutions to some equation relevant to \eqref{main-eq},    developed in  \cite{SaSh2017-1, SaSh2020, SaShXu2019} for the case $m=\alpha=\gamma=1$.
\end{itemize}
\end{remark}

\begin{remark}
\label{rk3}
{\it This remark is about traveling wave solutions of \eqref{main-eq} connecting $(1,1)$ and $(0,0)$ when $\chi\ge 0$. }
\begin{itemize}

\item[(1)] ({\it Relatively small positive $\chi$ and traveling waves connecting $(1,1)$ and $(0,0)$}).
Theorem \ref{monotone-thm}(2) extends  \cite[Theorem 1.4]{SaShXu2019} for the case $m=\alpha=\gamma=1$ to the  general case $m,\alpha,\gamma\ge 1$. It is proved  by properly modified arguments, in particular,  super- and sub-solutions to some equation relevant to \eqref{main-eq},    developed in  \cite{SaSh2017-1, SaSh2020, SaShXu2019} for the case $m=\alpha=\gamma=1$.  The exponential decay rate of the traveling wave solutions is newly established even for the case $m=\alpha=\gamma=1$.

\item[(2)] ({\it Strong positive sensitivity and oscillatory traveling waves}).    In the case that $\frac{1}{2}<\frac{2\gamma+2}{2\gamma+m+1}$,
by the arguments of Theorem \ref{monotone-thm}(2), when $\frac{1}{2}\le \chi<\min\{\frac{2\gamma+2}{2\gamma+m+1},1\}$,  for any $c>2$, \eqref{main-eq} has a traveling wave solution
$(u(t,x),v(t,x))=(U(x-ct),V(x-ct))$ satisfying
$$
\lim_{x\to\infty} (U(x),V(x))=(0,0), \quad \liminf_{x\to -\infty} U(x)>0.
$$
As it is  mentioned before, when $m=\alpha=\gamma=1$,  for 
 general $\chi>0$,   it is proved in  \cite[Theorem 1.2]{HeRe2024} that   there  exist a $c\ge 2$ and  a traveling wave solution $(u(t,x),v(t,x))=(\phi(x-ct),\psi(x-ct))$ with
$\liminf_{x\to \infty}\phi(x)>0$ and $\lim_{x\to\infty}\phi(x)=0$. Such traveling wave solution with $\chi$ being  not small may not converge to $(1,1)$ and may be oscillatory as $x\to -\infty$ (see the numerical simulations in \cite{HeRe2024}).  Moreover,
It is observed numerically that 
 the minimal speed \eqref{main-eq}  with $m=\alpha=\gamma=1$ appears to always be $2$  for $\chi >0$ (see \cite{HeRe2024}). We conjecture that, for general $m,\alpha,\gamma\ge 1$, \eqref{main-eq} admits oscillatory traveling wave solutions when $\chi$ is not small and strong positive sensitivity does not speed up the wave speeds.  Again, thanks to the length of the paper, we do not carry out a study on this issue in this paper.

\end{itemize}

\end{remark}

\begin{remark}
\label{rk4}
{\it This remark is about the stability and uniqueness of traveling wave solutions of \eqref{main-eq} connecting $(1,1)$ and $(0,0)$.}
\begin{itemize}

\item[(1)] ({\it Lower bound $\chi^{**}_{\chi,m,\alpha,\gamma}$ of wave speeds}). Theorems \ref{stability-thm} and \ref{uniqueness-thm} indicate that traveling wave solutions of \eqref{main-eq} connecting $(1,1)$ and $(0,0)$ with relatively large speed $c$, namely, $c>c^{**}_{\chi,m,\alpha,\gamma}$, are stable and unique in certain weighted norm space.
Such stability and uniqueness  results are proved for the first time for traveling wave solutions of \eqref{main-eq} connecting $(1,1)$ and $(0,0)$. 
In spite that $c^{**}_{\chi,m, \alpha,\gamma}$ is not expected to be optimal,  $c^{**}_{\chi,m,\alpha,\gamma}$ is close to optimal when $|\chi|$ is small  and $\gamma=1$ in the sense
that $c^{**}_{\chi,m,\alpha,\gamma}-2=O(|\chi|^{1/6})$.

\item[(2)] ({\it Approaches/techniques}).  It is natural to study stability of traveling wave solutions in certain weighted space (see \cite{Sat1, Sat2}).  However,  thanks to the presence of chemotaxis, it is nontrivial to determine $c^{**}_{\chi,m,\alpha,\gamma}$ in Theorem \ref{stability-thm},  and  prove the stability of the traveling wave solutions of \eqref{main-eq} with speed $c>c^{**}_{\chi,m,\alpha,\gamma}$ in certain  weighted space. Our main idea is first to provide some a priori estimates for traveling wave solutions
$(u,v)$, in particular, for $|u_x|$ and $\frac{|u_x|}{u}$ (see  Lemmas \ref{estimate-lm1} and \ref{estimate-lm2});  next  set up some a priori assumption on the wave speed (see \eqref{cond-on-c});  then determine $c^{**}_{\chi,m,\alpha,\gamma}$ (see \eqref{c-star-star-eq}) via various nontrivial estimates; finally prove \eqref{main-stability-eq1} and \eqref{main-stability-eq2}. We believe that the techniques developed in this paper can be applied to some other chemotaxis models.
\end{itemize}

\end{remark}

\begin{remark}
\label{rk5}
{\it This remark is about some comments given by Zhi-An Wang}.  The author of the current paper gave a presentation of the main results of this paper  at  the International Conference on Nonlinear PDEs and
Applications in Mathematical Biology, Shanghai Jiao-Tong University. 
April 20-24, 2026. After the presentation, Wang commented that, in an on-going work, they proved  the monotonicity and stability of traveling wave solutions of  \eqref{main-eq} connecting $(1,1)$ and $(0,0)$ in the special case $m=\alpha=\gamma=1$ by some different approaches.
\end{remark}

The rest of the paper is organized as follows. In Section 2, we present some preliminary lemmas to be used in the proofs of the main results.  We study global existence and boundedness of positive classical solutions of \eqref{main-eq}  and  the stability of the positive constant solution $(1,1)$ of \eqref{main-eq}, and prove Propositions \ref{global-existence-prop} and
\ref{constant-stability-prop} in Section 3. In Section 4,   we study the existence of traveling wave solutions of \eqref{main-eq} connecting
$(1,1)$ and $(0,0)$, and prove Theorem \ref{monotone-thm}. Finally, in Section 5,  we study the stability of traveling wave solutions of \eqref{main-eq} and prove Theorem \ref{stability-thm}. 

\section{Preliminary lemmas}
\label{prelim-section}

In this section, we present   some basic estimates for the analytic semigroup $e^{(\Delta -I)t}$ and  some basic properties for the solutions of the elliptic equation
$v_{xx}-\lambda v+\mu u=0$ for  $\lambda,\mu>0$.

\subsection{Basic estimates for the analytic semigroup $e^{(\Delta -I)t}$}

Let $T(t)=e^{(\Delta -I)t}$ be the analytic semigroup generated by $\Delta-I$ on
$X=C_{\rm unif}^b(\RR)$ or $L^p(\RR)$ with $1\le p<\infty$.  In this  subsection, we present some basic estimates for  $e^{(\Delta -I)t}$.

\begin{lemma}
\label{prelim-semigroup-lm1}
For any $1<p\le q<\infty$, there are $C_p>0$ and $C_{p,q}>0$  such that for any $u\in L^p(\RR)$, the following hold:
\begin{equation}
\label{prelim-semigroup-eq1}
\|e^{(\Delta -I)t}u\|_{L^q(\RR)}\le C_{p,q} t^{-\frac{1}{2}\big(\frac{1}{p}-\frac{1}{q}\big)} e^{-t}\|u\|_{L^p(\RR)},
\end{equation}
\begin{equation}
\label{prelim-semigroup-eq2}
\|\nabla e^{(\Delta -I)t}u\|_{L^q(\RR)}\le C_{p,q} t^{-\frac{1}{2}-\frac{1}{2}\big(\frac{1}{p}-\frac{1}{q}\big)} e^{-t}\|u\|_{L^p(\RR)},
\end{equation}
\begin{equation}
\label{prelim-semigroup-eq3}
\|e^{(\Delta -I)t}\nabla \cdot u\|_{L^\infty(\RR)}\le C_p t^{-\frac{1}{2}-\frac{1}{2p}}e^{-t}\|u\|_{L^p(\RR)}.
\end{equation}
\end{lemma}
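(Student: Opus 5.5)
The plan is to write $e^{(\Delta-I)t}=e^{-t}e^{t\Delta}$ and represent $e^{t\Delta}$ by convolution with the one-dimensional heat kernel
$$G(t,x)=(4\pi t)^{-1/2}e^{-x^2/(4t)}.$$
All three estimates then follow from Young's convolution inequality together with explicit computations of $L^r$ norms of $G$ and $\partial_x G$. The factor $e^{-t}$ simply passes through, so it suffices to prove the bounds for $e^{t\Delta}$ without it.

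For \eqref{prelim-semigroup-eq1}, choose $r\ge1$ with $1+\frac1q=\frac1p+\frac1r$. Young's inequality gives
$$\|G(t,\cdot)*u\|_{L^q}\le \|G(t,\cdot)\|_{L^r}\|u\|_{L^p}.$$
By the scaling $x=\sqrt t\,y$,
$$\|G(t,\cdot)\|_{L^r}=t^{-\frac12(1-\frac1r)}\|G(1,\cdot)\|_{L^r}=C\,t^{-\frac12(\frac1p-\frac1q)}.$$
For \eqref{prelim-semigroup-eq2}, we have $\nabla e^{t\Delta}u=(\partial_xG(t,\cdot))*u$, and $\partial_xG(t,x)=t^{-1}\,(\partial_yG)(1,x/\sqrt t)$. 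Hence
$$\|\partial_x G(t,\cdot)\|_{L^r}=C\,t^{-\frac12-\frac12(1-\frac1r)},$$
and Young's inequality again yields the claimed power $t^{-\frac12-\frac12(\frac1p-\frac1q)}$. The constants are finite because $G(1,\cdot)$ and $\partial_yG(1,\cdot)$ are Schwartz functions.

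For \eqref{prelim-semigroup-eq3}, first take $u$ smooth with compact support. Integration by parts (the derivative moves onto the kernel) gives
$$e^{t\Delta}\nabla\cdot u=(\partial_xG(t,\cdot))*u.$$
Hölder's inequality with the conjugate exponent $p'$ then gives
$$\|(\partial_xG)*u\|_{L^\infty}\le\|\partial_xG(t,\cdot)\|_{L^{p'}}\|u\|_{L^p}.$$
Here $p>1$ guarantees $p'<\infty$; the case $p'=\infty$ is also fine but is not needed. By the scaling above,
$$\|\partial_xG(t,\cdot)\|_{L^{p'}}=C\,t^{-\frac12-\frac12(1-\frac1{p'})}=C\,t^{-\frac12-\frac1{2p}}.$$
Finally, extend from smooth compactly supported $u$ to all of $L^p$ by density, taking $e^{(\Delta-I)t}\nabla\cdot$ to be defined as this bounded extension.

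There is no real obstacle here. The only point needing care is the interpretation of $e^{(\Delta-I)t}\nabla\cdot u$ for nonsmooth $u$, which is handled by the density and extension argument just described.
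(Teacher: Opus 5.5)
Your proof is correct and takes essentially the paper's approach: the paper also obtains \eqref{prelim-semigroup-eq1} and \eqref{prelim-semigroup-eq2} from $L^p$--$L^q$ (Young-type) convolution estimates for the heat kernel, which you make explicit through the scaling of $G$ and $\partial_xG$. For \eqref{prelim-semigroup-eq3} the paper only cites an external lemma, and your argument (move the derivative onto the kernel, apply H\"older with the exponent $p'$, then extend by density) proves that step directly.
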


\begin{proof}
The estimates \eqref{prelim-semigroup-eq1} and \eqref{prelim-semigroup-eq2} follow from  the $L^p$-$L^q$ estimates for the convolution product 
and \eqref{prelim-semigroup-eq3}  follows from \cite[Lemma 3.1]{SaSh2017}.
\end{proof}

\subsection{Basic properties of the elliptic equation $v_{xx}-\lambda  v+ \mu u=0$}

In this subsection, we present some basic properties for the solutions of the elliptic equation
\begin{equation}
\label{general-v-eq}
v_{xx}-\lambda  v+ \mu u=0,\quad x\in\RR,
\end{equation}
where $\lambda,\mu>0$ are positive constants. 
For  $u\in C^b_{\rm unif}(\R)$, let $\Psi(x;u,\lambda,\mu)$ be the solution of \eqref{general-v-eq}.
Note that
\begin{equation}\label{psi-eq1}
\Psi(x;u,\lambda,\mu)=\mu \int_{0}^{\infty}\int_{\R}\frac{e^{-\lambda  s}e^{-\frac{|y-x|^2}{4s}}}{\sqrt{4\pi s}}u(y)dyds.
\end{equation}

\begin{lemma}
\label{prelim-v-lm1} It holds that
\begin{equation}\label{psi-eq2}
\Psi(x;u,\lambda, \mu)=\frac{\mu  }{2\sqrt{\lambda}}\int_{\R}e^{-\sqrt{\lambda } |x-y|}u(y)dy,
\end{equation}
and
\begin{equation}\label{psi-eq3}
\frac{d}{dx}\Psi(x;u,\lambda,\mu )=-\frac{\mu  }{2}e^{-\sqrt{\lambda}  x}\int_{-\infty}^xe^{\sqrt{\lambda}  y}u(y)dy + \frac{\mu }{2}e^{\sqrt {\lambda} x}\int_{x}^{\infty}e^{-\sqrt{\lambda}  y}u(y)dy.
\end{equation}
\end{lemma}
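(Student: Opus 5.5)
We prove \eqref{psi-eq2} by evaluating the time integral in \eqref{psi-eq1} in closed form, and then obtain \eqref{psi-eq3} by differentiating the resulting convolution, split at $y=x$.

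\emph{Proof of \eqref{psi-eq2}.} Since $u\in C^b_{\rm unif}(\RR)$ is bounded and the integrand in \eqref{psi-eq1} is nonnegative up to the factor $u$, it is absolutely integrable on $(0,\infty)\times\RR$. Indeed, its $y$-integral is bounded by $\|u\|_\infty e^{-\lambda s}$, which is integrable in $s$. Fubini's theorem therefore lets us swap the order of integration:
$$
\Psi(x;u,\lambda,\mu)=\mu\int_{\RR}K(x-y)u(y)\,dy,\qquad K(z)=\int_0^\infty \frac{e^{-\lambda s-\frac{z^2}{4s}}}{\sqrt{4\pi s}}\,ds .
$$
The kernel $K$ is the Green function of $-\partial_{zz}+\lambda$ on $\RR$, and its value is the standard identity
$$
\int_0^\infty s^{-1/2}e^{-\lambda s-\frac{a^2}{s}}\,ds=\sqrt{\pi/\lambda}\;e^{-2a\sqrt\lambda},\qquad a=\tfrac{|z|}{2}\ge 0 .
$$
This gives $K(z)=\frac{1}{2\sqrt\lambda}e^{-\sqrt\lambda|z|}$, and hence \eqref{psi-eq2}.

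The identity can be checked in two ways:
\begin{itemize}
\item By the substitution $s=t^2$, which reduces it to $\int_0^\infty e^{-(\sqrt\lambda t-a/t)^2}\,dt$ after factoring out $e^{-2a\sqrt\lambda}$; this integral is then evaluated by the Cauchy--Schlömilch substitution.
\item Alternatively, by taking the Fourier transform of $K$: $\hat K(\xi)=\int_0^\infty e^{-(\lambda+\xi^2)s}\,ds=(\lambda+\xi^2)^{-1}$, which is the transform of $\frac{1}{2\sqrt\lambda}e^{-\sqrt\lambda|z|}$.
\end{itemize}
This kernel identity is the only step requiring any computation.

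\emph{Proof of \eqref{psi-eq3}.} Split the integral in \eqref{psi-eq2} at $y=x$:
$$
\Psi=\frac{\mu}{2\sqrt\lambda}\Big(e^{-\sqrt\lambda x}\int_{-\infty}^x e^{\sqrt\lambda y}u(y)\,dy+e^{\sqrt\lambda x}\int_x^\infty e^{-\sqrt\lambda y}u(y)\,dy\Big).
$$
Both integrals converge absolutely because $u$ is bounded, and their integrands are continuous, so each term is differentiable. Apply the product rule together with the fundamental theorem of calculus:
\begin{itemize}
\item The boundary contributions are $\pm e^{-\sqrt\lambda x}e^{\sqrt\lambda x}u(x)=\pm u(x)$, one from each term with opposite signs, so they cancel.
\item Differentiating the exponential prefactors brings down $\mp\sqrt\lambda$, which cancels the factor $\frac{1}{\sqrt\lambda}$ in front.
\end{itemize}
What remains is exactly \eqref{psi-eq3}.

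Finally, differentiating \eqref{psi-eq3} once more in the same way gives $\Psi_{xx}=\lambda\Psi-\mu u$. So $\Psi$ indeed solves \eqref{general-v-eq}, which is consistent with \eqref{psi-eq1}; uniqueness among bounded solutions holds since $\lambda>0$.
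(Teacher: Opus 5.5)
Your proof is correct and complete. The Fubini step is justified by
$\int_0^\infty\!\int_{\RR}\frac{e^{-\lambda s}e^{-|x-y|^2/(4s)}}{\sqrt{4\pi s}}|u(y)|\,dy\,ds\le \|u\|_\infty/\lambda$.
The identity $\int_0^\infty s^{-1/2}e^{-\lambda s-a^2/s}\,ds=\sqrt{\pi/\lambda}\,e^{-2a\sqrt\lambda}$ with $a=|x-y|/2$ gives exactly the kernel $\frac{1}{2\sqrt\lambda}e^{-\sqrt\lambda|x-y|}$. Both of your verifications of that identity check out. In the derivative, the boundary terms $\pm\frac{\mu}{2\sqrt\lambda}u(x)$ cancel, and the derivatives of the exponential prefactors produce \eqref{psi-eq3}.

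The paper gives no argument of its own here; it simply refers to \cite[Lemma 2.1]{SaShXu2019}. Your computation is therefore a self-contained replacement for that citation rather than a competing route.

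One remark on economy. The paper first \emph{defines} $\Psi$ as the solution of \eqref{general-v-eq} and only afterwards records \eqref{psi-eq1}. With that definition, your final paragraph alone already proves \eqref{psi-eq2}: the right-hand side is a bounded $C^2$ solution, and bounded solutions are unique because $w_{xx}=\lambda w$ has no nontrivial bounded solution when $\lambda>0$. The kernel identity is what you need if you take \eqref{psi-eq1} as the starting point. Doing both, as you do, also confirms that the paper's two descriptions of $\Psi$ agree.
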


\begin{proof}
It follows from \cite[Lemma 2.1]{SaShXu2019}.
\end{proof}

\begin{lemma}\label{prelim-v-lm2}
For every $u\in C^b_{\rm unif}(\R)$, $u(x)\ge 0$, it holds that
\begin{equation}\label{estimate-on-space-derivative-1}
\left| \frac{d}{dx}\Psi(x;u,\lambda,\mu)\right|\leq \sqrt{\lambda} \Psi(x;u),\ \quad  \forall\ x\in\R.
\end{equation}
\end{lemma}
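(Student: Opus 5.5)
The plan is to start from the explicit representations in Lemma \ref{prelim-v-lm1}. By \eqref{psi-eq2},
$$
\Psi(x;u,\lambda,\mu)=\frac{\mu}{2\sqrt{\lambda}}\Big(e^{-\sqrt\lambda x}\int_{-\infty}^x e^{\sqrt\lambda y}u(y)\,dy+e^{\sqrt\lambda x}\int_x^\infty e^{-\sqrt\lambda y}u(y)\,dy\Big).
$$
Write
$$
A(x)=\frac{\mu}{2}e^{-\sqrt\lambda x}\int_{-\infty}^x e^{\sqrt\lambda y}u(y)\,dy, \qquad B(x)=\frac{\mu}{2}e^{\sqrt\lambda x}\int_x^\infty e^{-\sqrt\lambda y}u(y)\,dy.
$$
Both integrals converge because $u\in C^b_{\rm unif}(\RR)$ is bounded. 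Since $u\ge 0$, we have $A(x)\ge 0$ and $B(x)\ge 0$ for every $x$. With this notation, $\Psi(x;u,\lambda,\mu)=\frac{1}{\sqrt\lambda}\big(A(x)+B(x)\big)$, and \eqref{psi-eq3} gives $\frac{d}{dx}\Psi(x;u,\lambda,\mu)=-A(x)+B(x)$.

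The conclusion then follows from the triangle inequality together with nonnegativity:
$$
\Big|\frac{d}{dx}\Psi\Big|=|B-A|\le A+B=\sqrt\lambda\,\Psi(x;u,\lambda,\mu),
$$
which is exactly \eqref{estimate-on-space-derivative-1}. Here $\Psi(x;u)$ in the statement is read as $\Psi(x;u,\lambda,\mu)$.

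There is no real obstacle in this argument. The only points to check are that the constant prefactors in \eqref{psi-eq2} and \eqref{psi-eq3} match, so that the factor $\sqrt\lambda$ comes out exactly, and that differentiating under the integral sign is justified. The latter is already built into Lemma \ref{prelim-v-lm1}, which we take as given.
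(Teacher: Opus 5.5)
Your argument is correct and matches the paper's route. The paper gives no proof of its own and only cites \cite[Lemma 2.2]{SaShXu2019}; your derivation from \eqref{psi-eq2}--\eqref{psi-eq3} is the standard proof of that fact: split $\Psi$ and $\frac{d}{dx}\Psi$ into the two nonnegative one-sided integrals $A,B$, then use $|B-A|\le A+B=\sqrt{\lambda}\,\Psi(x;u,\lambda,\mu)$, reading $\Psi(x;u)$ on the right-hand side as $\Psi(x;u,\lambda,\mu)$ as intended.
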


\begin{proof}
It follows from \cite[Lemma 2.2]{SaShXu2019}.
\end{proof}

\begin{lemma}
\label{prelim-v-lm3}
For any $0<\kappa<1$, let $\varphi_\kappa =e^{-\kappa x}$. Then,  for any $M\ge 1$ and 
$u_0\in C_{\rm unif}^b(\RR)$ with  $0\le u\le \min\{M,\varphi_\kappa\}$,  it holds that
$$
\Psi(x;u):=\Psi(x;u,1,1)\le \min\big\{M,\frac{1}{1-\kappa^2} e^{-\kappa x}\big\}\quad \forall\, x\in\RR.
$$
\end{lemma}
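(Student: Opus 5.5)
The plan is to work directly from the kernel representation \eqref{psi-eq2} with $\lambda=\mu=1$:
$$
\Psi(x;u)=\frac12\int_{\RR}e^{-|x-y|}u(y)\,dy .
$$
The bound $\Psi\le M$ is immediate. Since $0\le u\le M$ and $\frac12\int_{\RR}e^{-|x-y|}dy=1$, we get $\Psi(x;u)\le M$. Alternatively, one can apply the comparison principle to $v_{xx}-v+u=0$ with the constant supersolution $M$.

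For the exponential bound we use $u(y)\le e^{-\kappa y}$, which gives
$$
\Psi(x;u)\le \frac12\int_{-\infty}^{x}e^{-(x-y)}e^{-\kappa y}\,dy+\frac12\int_{x}^{\infty}e^{(x-y)}e^{-\kappa y}\,dy .
$$
We then compute the two integrals separately, using $0<\kappa<1$ to guarantee convergence:
\begin{itemize}
\item the first integral equals $\frac12 e^{-x}\cdot\frac{e^{(1-\kappa)x}}{1-\kappa}=\frac{e^{-\kappa x}}{2(1-\kappa)}$;
\item the second equals $\frac12 e^{x}\cdot\frac{e^{-(1+\kappa)x}}{1+\kappa}=\frac{e^{-\kappa x}}{2(1+\kappa)}$.
\end{itemize}
Adding these gives
$$
\frac{e^{-\kappa x}}{2}\Big(\frac{1}{1-\kappa}+\frac{1}{1+\kappa}\Big)=\frac{e^{-\kappa x}}{1-\kappa^2}.
$$
Equivalently, $\frac{1}{1-\kappa^2}e^{-\kappa x}$ is an exact solution of $v_{xx}-v+e^{-\kappa x}=0$, so comparison also yields the bound.

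Taking the minimum of the two bounds gives the claim. No step poses a real obstacle. The only point requiring care is the condition $\kappa<1$, which is needed for the first integral to converge at $-\infty$ (or, in the comparison version, to make the supersolution positive).
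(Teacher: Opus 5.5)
Your proposal is correct and follows the paper's proof: it uses the kernel representation \eqref{psi-eq2} with $\lambda=\mu=1$, bounds $u(y)\le e^{-\kappa y}$, evaluates the two one-sided integrals to get $\frac{1}{1-\kappa^2}e^{-\kappa x}$, and obtains $\Psi\le M$ the same way. Your computations match the paper's line for line, and the comparison-principle alternative you mention is only a side remark.
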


\begin{proof}
By \eqref{psi-eq2},
\begin{align*}
\Psi(x;u)&=\frac{1}{2}\int_{-\infty}^\infty e^{-|x-y|} e^{-\kappa y} u(y)dy\\
&\le \frac{1}{2}e^{-x}\int_{-\infty}^x e^{(1-\kappa)y}dy+\frac{1}{2} e^x \int_x^\infty e^{(-1-\kappa)y}dy\\
&=\frac{1}{2(1-\kappa)} e^{-\kappa x}+\frac{1}{2(1+\kappa)} e^{-\kappa x}=\frac{1}{1-\kappa^2} e^{-\kappa x}.
\end{align*}
Similarly, we have
$$
\Psi(x;u)\le M\quad \forall\, x\in\RR.
$$
The lemma is thus proved.
\end{proof}

Note that for $u\in C_{\rm unif}^b(\RR)$,  $\Psi(\cdot;u,\lambda,\mu)\in L_{\rm loc}^p(\RR)$ for any $p\ge 1$, but we may not have $\Psi(\cdot;u,\lambda,\mu)\in L^p(\RR)$. To estimate $\|\Psi(\cdot;u,\lambda,\mu)\|_{L_{\rm loc}^p(\RR)}$, it suffices to estimate
$\|\Psi(\cdot;u,\lambda,\mu)\psi(\cdot)\|_{L^p(\RR)}$ for certain  exponential decay function $\psi$. 
Note also that,     for  any {$\kappa_1>0$,  there are  $\kappa_2>0$} and
     $\psi\in C^\infty(\R^n)$  such that
    \begin{equation}\label{psi-eq00}
        0<\psi(x)\le e^{-\kappa_2|x|},\quad | \psi_x(x)|\le \kappa_1 \psi(x),\quad | \psi_{xx} (x)|\le \kappa_1\psi \quad  \forall\, x\in\RR
    \end{equation}
(see \cite[Lemma 2.4]{HaShZh2024}). 

\begin{lemma}
\label{prelim-v-lm4}
For given $p>1$, 
there is $C'_p$ such that for any $\kappa_1\ll  1$,
$$
\int_{\RR} |\nabla \Psi(\cdot;u^\gamma,\lambda,\mu) |^{p}\psi\le C'_p \int_{\RR} u^{\gamma p}\psi,
$$
where $\psi$ is as in \eqref{psi-eq00}, and $\gamma>0$.
\end{lemma}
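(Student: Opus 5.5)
The approach is to write $\nabla\Psi$ explicitly as a convolution, bound it pointwise by a convolution with an exponential kernel, and then control the weighted $L^p$ norm using Jensen's inequality together with the slow variation of $\psi$. Set $w=u^\gamma\ge 0$ and $s=\sqrt{\lambda}$. By \eqref{psi-eq3} in Lemma \ref{prelim-v-lm1},
$$
|\nabla\Psi(x;w,\lambda,\mu)|\le \frac{\mu}{2}\int_{\RR}e^{-s|x-y|}w(y)\,dy=:\frac{\mu}{2}(K*w)(x),\qquad K(z)=e^{-s|z|}.
$$
It therefore suffices to prove
$$
\int_{\RR}|K*w|^p\psi\le C\int_{\RR}w^p\psi,
$$
with $C$ depending only on $p$, $\lambda$ and $\mu$, provided $\kappa_1$ is small.

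\textbf{Step 1 (Jensen).} Since $\|K\|_{L^1}=2/s$, Jensen's (or H\"older's) inequality with respect to the probability measure $\frac{s}{2}K(x-y)\,dy$ gives
$$
|K*w(x)|^p\le \Big(\frac{2}{s}\Big)^{p-1}\int_{\RR}K(x-y)w(y)^p\,dy.
$$

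\textbf{Step 2 (Fubini).} Multiplying by $\psi(x)$ and integrating in $x$,
$$
\int_{\RR}|K*w|^p\psi\le \Big(\frac{2}{s}\Big)^{p-1}\int_{\RR}w(y)^p\Big(\int_{\RR}e^{-s|x-y|}\psi(x)\,dx\Big)dy.
$$

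\textbf{Step 3 (comparing $\psi(x)$ with $\psi(y)$).} From $|\psi_x|\le\kappa_1\psi$ in \eqref{psi-eq00} we get $|(\ln\psi)'|\le\kappa_1$, hence $\psi(x)\le\psi(y)e^{\kappa_1|x-y|}$. Taking $\kappa_1\le s/2$,
$$
\int_{\RR}e^{-s|x-y|}\psi(x)\,dx\le \psi(y)\int_{\RR}e^{-(s-\kappa_1)|z|}\,dz\le \frac{4}{s}\psi(y).
$$
Combining the steps yields the claim with $C'_p=(\mu/2)^p(2/s)^{p-1}(4/s)$, which is independent of $\kappa_1\le\sqrt{\lambda}/2$.

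The one delicate point is Step 3. The weight $\psi$ has to be moved across the convolution, and this works only because $\psi$ varies at an exponential rate $\kappa_1$ slower than the kernel decay $\sqrt\lambda$. This is exactly why the hypothesis $\kappa_1\ll 1$ is needed. The possible non-integrability of $w$ on $\RR$ causes no trouble: both sides carry the weight $\psi\le e^{-\kappa_2|x|}$, and all integrands are nonnegative, so Fubini applies (with the right-hand side possibly infinite, in which case there is nothing to prove).
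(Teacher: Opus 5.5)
Your proof is correct, but it follows a different route from the paper's.

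\textbf{The paper's route.} The paper sets $v=\Psi(\cdot;u^\gamma,\lambda,\mu)$ and $w=v\psi$. It writes $w$ and $\nabla w$ through the semigroup $e^{(\Delta-\lambda)s}$ applied to the source terms $\mu u^\gamma\psi$, $v_x\psi_x$, $v\psi_{xx}$, and uses the $L^p$--$L^p$ bounds of Lemma \ref{prelim-semigroup-lm1}. It then absorbs $\kappa_1\big(2\|\nabla v\,\psi\|_{L^p}+\|v\psi\|_{L^p}\big)$ into the left-hand side for $\kappa_1\ll 1$. This is legitimate because $v,v_x$ are bounded and $\psi$ decays exponentially, so these norms are finite. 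The result is $\|v\psi\|_{L^p}+\|\nabla v\,\psi\|_{L^p}\le\tilde C_p\|u^\gamma\psi\|_{L^p}$, which carries the weight $\psi^p$. The stated form with weight $\psi$ is then obtained by rerunning everything with $\psi^{1/p}$ in place of $\psi$.

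\textbf{Your route.} You instead exploit the explicit one-dimensional kernel in \eqref{psi-eq3}. The steps are the pointwise bound $|\Psi_x|\le\frac{\mu}{2}\,e^{-\sqrt\lambda|\cdot|}*u^\gamma$, Jensen for the normalized kernel, Tonelli, and the log-Lipschitz bound $\psi(x)\le\psi(y)e^{\kappa_1|x-y|}$ to pass the weight through the convolution.

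\textbf{What each buys.} Your argument is shorter and sharper:
\begin{itemize}
\item it gives the explicit constant $C'_p=2\mu^p\lambda^{-p/2}$ and the explicit threshold $\kappa_1\le\sqrt\lambda/2$;
\item it treats the weight $\psi$ directly, with no $\psi\mapsto\psi^{1/p}$ step;
\item it uses only $|\psi_x|\le\kappa_1\psi$, not the bound on $\psi_{xx}$;
\item it needs no absorption, so no a priori finiteness of weighted norms is required, and it even works for $p=1$.
\end{itemize}
The paper's semigroup argument does not depend on an explicit formula for the Green kernel, and it controls $\|v\psi\|_{L^p}$ at the same time. That makes it easier to transplant to settings where the elliptic operator is less explicit.
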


\begin{proof}
It can be proved by the arguments of \cite[Proposition 3.1]{HaShZh2024}. For reader's  convenience, we give a proof in the following.

First, let   $v(x)=\Psi(x;u^\gamma,\lambda,\mu)$ and $w=v\psi$.  Then $w$ satisfies 
$$
w_{xx}-2v_x\psi_x+v\psi_{xx}-\lambda w+\mu u^\gamma \psi=0.
$$
By \eqref{psi-eq1}, we have
\begin{align*}
w&= \int_{0}^{\infty}\int_{\RR}\frac{e^{-\lambda  s}e^{-\frac{|y-x|^2}{4s}}}{\sqrt{4\pi s}} \big(  -2v_x\psi_x+v\psi_{xx}+\mu u^\gamma \psi\big) dyds\\
&=\int_0^\infty e^{(1-\lambda)s} e^{(\Delta -I)s}  \big(  -2v_x\psi_x+v\psi_{xx}+\mu u^\gamma \psi\big) 
ds,
\end{align*}
and 
\begin{align*}
\nabla w
&=\int_0^\infty \nabla e^{(1-\lambda)s} e^{(\Delta -I)s}  \big(  -2v_x\psi_x+v\psi_{xx}+\mu u^\gamma \psi\big) 
ds.
\end{align*}
Then by \eqref{prelim-semigroup-eq1}, it holds that
    \begin{align}
    \label{main-estimate-eq1}
    \|v(\cdot)\psi(\cdot)\|_{L^{p}}
    &\le C_{p,p}  \int_0^\infty e^{-\lambda s} \Big(2 \kappa_1\|(\nabla v (\cdot)) \psi\|_{L^p}+\kappa_1\|v(\cdot) \psi\|_{L^p}+\mu \|u^\gamma (\cdot)\psi\|_{L^p}\Big)ds\nonumber\\
    &\le \frac{C_{p,p}}{\lambda}  \mu \|u^\gamma (\cdot)\psi\|_{L^p}  + \frac{C_{p,p}}{\lambda}  \Big(2 \kappa_1 \|\nabla v (\cdot) \psi\|_{L^p}+\kappa_1
    \|v(\cdot) \psi\|_{L^p}\Big).
    \end{align}
   By \eqref{prelim-semigroup-eq2}, it holds that
    \begin{align}
    \label{main-estimate-eq2}
    &\|(\nabla v(\cdot)) \psi\|_{L^{p}}=\|\nabla (v(\cdot)\psi)-v(\cdot)\nabla \psi\|_{L^{p}}\nonumber\\
    &\le \|v(\cdot) \nabla \psi\|_{L^{p}}+\|\nabla (v(\cdot)\psi)\|_{L^{p}}\nonumber\\
    &\le \kappa_1\|v(\cdot)\psi\|_{L^{p}}+C_{p,p}\int_0^\infty e^{-\lambda s} s^{-\frac{1}{2}}\Big(2 \kappa_1\|(\nabla v (\cdot)) \psi\|_{L^p}+\kappa_1\|v(\cdot) \psi\|_{L^p}+\mu \|u^\gamma (\cdot)\psi\|_{L^p}\Big)ds\nonumber\\
    &\le \kappa_1{\|v(\cdot)\psi\|_{L^{p}}}+C_{p,p} \mu \|u^\gamma (\cdot)\psi\|_{L^p}\int_0^\infty e^{-\lambda s}s^{-\frac{1}{2}} ds\nonumber\\
    &\quad + C_{p,p}\kappa_1 \Big(2 \|\nabla v(\cdot)\psi\|_{L^p}+\|v(\cdot)\psi\|_{L^p}\Big) \int_0^\infty e^{-\lambda s}s^{-\frac{1}{2}} ds.
    \end{align}
By \eqref{main-estimate-eq1} and \eqref{main-estimate-eq2},  
$$
\|v\psi\|_{L^p}+\|\nabla v\psi\|_{L^p}(\RR)\le \tilde C_p \|u^\gamma \psi\|_{L^p(\RR)}
$$
for all $0<\kappa_1\ll 1$, where $\tilde C_p=2\Big(\frac{\mu C_{p,p}}{\lambda}+\mu C_{p,p}\int_0^\infty e^{-\lambda s} s^{-1/2} ds\Big)$.

Replacing $\psi$ be $\psi^{1/p}$ in the above arguments, there is $C'_p$ independent of $u$ such that 
$$
\int_{\RR} |\nabla v|^p\psi\le C'_p\int_{\RR}u^{\gamma p}\psi
$$
for all $\kappa_1\ll 1$.
The lemma is thus proved.
    \end{proof}

\section{Global existence, boundedness, and stabilization of positive classical solutions and proofs of Propositions  \ref{global-existence-prop} and
\ref{constant-stability-prop}}
\label{cauchy-problem-section}

In this section, we study the global existence and boundedness of classical solutions of \eqref{main-eq} with 
initial condition $u_0\in C_{\rm unif}^b(\RR)$  and $u_0\ge 0$, and  the stability of the positive constant solution $(1,1)$ of \eqref{main-eq}, and prove Propositions \ref{global-existence-prop} and
\ref{constant-stability-prop}.

\subsection{Global existence and boundedness of positive classical solutions
and
proof of Proposition \ref{global-existence-prop}}

In this subsection, we prove Proposition \ref{global-existence-prop}.

\begin{proof} [Proof of Proposition \ref{global-existence-prop}(1)]
 Assume that   $\chi\le 0$.   We first prove $T_{\max}(u_0)=\infty$ and \eqref{upper-bound-eq1} holds. 
The idea to do so is  described 
as follows. For given $T>0$, let
$$
\mathcal{X}_T=C([0,T],C_{\rm unif}^b(\RR))
$$
endowed with the norm
$$
\|u\|_{\mathcal{X}_T}=\sum_{k=1}^\infty\frac{1}{2^k}\|u(\cdot,\cdot)\|_{L^\infty([0,T]\times [-k,k])}.
$$
For  given $T>0$ and $u_0\in C_{\rm unif}^b(\RR)$ with $u_0\ge 0$, let
$$
\underline u_0=\min\{1,\inf_{x\in\RR}u_0(x)\},\quad \bar u_0=\max\{1,\sup_{x\in\RR}u_0(x)\},
$$
and
$$
\mathcal{E}_T(u_0)=\{ \mathcal{X}_T\,|\, u(0,x)=u_0(x),\,\,   \underline u_0\le u(t,x)\le\bar u_0\}.
$$
Then $\mathcal{X}_T$ is a locally convex topological vector space and  $\mathcal{E}_T$ is a nonempty convex closed  subset of $\mathcal{X}_T$.
We prove via 
 Schauder fixed-point theorem that for any $T>0$,  there hold $T<T_{\max}(u_0)$ and $u(\cdot,\cdot;u_0)\in \mathcal{E}_T$.  We provide the detailed proof in three steps.

\smallskip

\noindent{\bf Step 1.} In this step, we construct a mapping $\mathcal{T}:\mathcal{E}_T(u_0)\to \mathcal{E}_T(u_0)$.

\smallskip

For any $u\in \mathcal{E}_T(u_0)$, let 
$$
V(t,x;u)=\Psi(x;u^\gamma(t,\cdot),1,1),
$$
where $\Psi(x;u^\gamma(t,\cdot),1,1)$ is as in \eqref{psi-eq1} with $\mu=\lambda=1$.  Note that
$$
\underline u_0^\gamma\le u^\gamma(t,x)\le \bar u_0^\gamma\quad \forall\, t\in [0,T],\,\, x\in\mathbb{R}.
$$
By comparison principle for elliptic equations, it holds that
$$
\underline u_0^\gamma\le V(t,x;u)\le \bar u_0^\gamma\quad \forall\, t\in [0,T],\,\, x\in\mathbb{R}.
$$
 Let
$U(t,x;u)$ be the solution of
\begin{equation}
\label{aux-eq1}
\begin{cases}
U_t=U_{xx}+|\chi|m U^{m-1} U_x V_x +U\big(1+|\chi| U^{m-1} V - (U^\alpha +|\chi|U^{m+\gamma -1})\big),\quad &x\in\RR\cr
U(0,x;u_0)=u_0(x),\quad &x\in\RR.
\end{cases}
\end{equation}
It is clear that $U\equiv \underline u_0$ is a sub-solution of \eqref{aux-eq1} and $U\equiv \bar u_0$ is a super-solution of \eqref{aux-eq1}.
Then by comparison principle for parabolic equations, we have
$$
\underline u_0\le U(T,x;u)\le\bar u_0\quad \forall\, t\in [0,T],\,\, x\in\mathbb{R}.
$$
Hence
$$
U(\cdot,\cdot;u)\in \mathcal{E}_T,\quad U(0,\cdot;u)=u_0(\cdot),
$$
and then the mapping  $\mathcal{T}$ defined by 
$\mathcal{T}(u)=U(\cdot,\cdot;u)$ maps 
$\mathcal{E}_T(u_0)$ into itself.

\smallskip

\noindent{\bf Step 2.} In this step, we prove that the mapping $\mathcal{T} :\mathcal{E}_T(u_0)\to  \mathcal{E}_T(u_0)$ is  continuous and compact. 

\smallskip

The compactness and continuity of the mapping $\mathcal{T}$ can be proved by  the similar arguments  of \cite[Lemma 4.3]{SaSh2017-1}.
For reader's convenience, we outline the proof through  the following two claims.

\smallskip

\noindent{\bf Claim 1.} For any sequence $\{u_n\}\subset \mathcal{E}_T(u_0)$, there is a subsequence $\{u_{n_k}\}$ such that $\{\mathcal{T}(u_{n_k})\}$ converges  to some $U\in\mathcal{E}_T(u_0)$ in the norm $\|\cdot\|_{\mathcal{X}_T}$, which implies  that the  mapping $\mathcal{T} :\mathcal{E}_T(u_0)\to  \mathcal{E}_T(u_0)$ is compact.  The idea to prove this claim is as follows.  First,  let
$U_n(t,x)=U(t,x;u_n)$. Then
\begin{align*}
U_n(t,\cdot)&=e^{(\Delta -I)t}u_0(\cdot)-\chi \int_0^t e^{(\Delta-I)(t-s)}\partial_x \big(U_{n}^m(s,\cdot)V_x(s,\cdot;u_{n}(s,\cdot))\big)dx\\
&\quad +\int_0^t e^{(\Delta-I)(t-s)}U_{n}(s,\cdot) 
\big(2-U^\alpha_{n}(s,\cdot)\big)ds.
\end{align*}
Applying this equality, the boundedness of $U_n$,  and the estimates in Lemma \ref{prelim-semigroup-lm1}, we can prove
that
\begin{equation}
\label{new-aux-eq1}
\lim_{t\to 0+} \|U(t,\cdot;u_{n})-u_0(\cdot)\|_\infty=0\quad \text{
uniformly in}\quad n\ge 1.
\end{equation} 

Next,  applying \eqref{new-aux-eq1}, a priori estimates for
parabolic equations,  and the diagonal argument, we can prove that there are $\{u_{n_k}\}$ and  $U(t,x)\in\mathcal{X}_T$ such that
\begin{equation}
\label{new-aux-eq2}
\lim_{n_k\to\infty} U(t,x;u_{n_k})=U(t,x)\quad \text{locally uniformly in}\quad (t,x)\in [0,T]\times\mathbb{R}.
\end{equation}
By \eqref{new-aux-eq1} and \eqref{new-aux-eq2},
$$
\lim_{n_k\to\infty}\|U(\cdot,\cdot;u_{n_k})-U(\cdot,\cdot)\|_{\mathcal{X}_T}=0,
$$
and $U(0,x)=u_0(x)$. Hence the mapping $\mathcal{T}$ is compact.

\smallskip

\noindent {\bf Claim 2.} The mapping $\mathcal{T}$ is continuous. The idea to  prove this claim is as follows.  Suppose that $u_n,\tilde u\in \mathcal{E}_T(u_0)$, and $\lim_{n\to\infty}\|u_n-\tilde u\|_{\mathcal{X}_T}=0$.  First,  applying \eqref{psi-eq2} and \eqref{psi-eq3}, we can prove that
$$
\lim_{n\to\infty}\Big( \|V(\cdot,\cdot;u_n)-V(\cdot,\cdot;\tilde u)\|_{\mathcal{X}_T}+
\|V_x(\cdot,\cdot);u_n)-V_x(\cdot,\cdot;\tilde u)\|_{\mathcal{X}_T}\Big)=0.
$$
Next, we prove $\lim_{n\to\infty}\|U(\cdot;\cdot;u_n)-U(\cdot;\cdot;\tilde u)\|_{\mathcal{X}_T}=0$
by contradiction. Assume that  $U(t,x;u_n)$ does not converge to $U(t,x;\tilde u)$ in $\mathcal{X}_T$. Then there is $\epsilon_0>0$ and $u_{n_k}$ such that
\begin{equation}
\label{new-aux-eq3}
\|U(\cdot,\cdot;u_{n_k})-U(\cdot,\cdot;\tilde u)\|_{\mathcal{X}_T}\ge \epsilon_0.
\end{equation}
By the compactness of $\mathcal{T}$, without loss of generality, we may assume that
there is
$\tilde U\in\mathcal{X}_T(u_0)$ such that
$$
\lim_{n_k\to\infty}\|U(\cdot,\cdot;u_{n_k})-\tilde U(\cdot,\cdot)\|_{\mathcal{X}_T}=0.
$$
Let $\tilde V(t,x)=V(t,x;\tilde u)$. Then both $\tilde U(t,x)$ and $u(t,x;\tilde u)$ are solutions of
\eqref{aux-eq1} with $V=V(t,x;\tilde u)$.
Then we must have $U(t,x;\tilde u)=\tilde U(t,x)$ and hence $\lim_{n_k\to\infty}\|U(\cdot,\cdot;u_{n_k})-u(\cdot,\cdot;\tilde u)\|_{X_T}=0$, which contradicts to \eqref{new-aux-eq3}.
This implies $\lim_{n\to\infty}\|U(\cdot;\cdot;u_n)-U(\cdot;\cdot;\tilde u)\|_{\mathcal{X}_T}=0$, that is, the mapping $\mathcal{T}$ is continuous.

\smallskip

\noindent{\bf Step 3.}  In this step, we show that for any $T>0$,  we have $T<T_{\max}(u_0)$ and $u(\cdot,\cdot;u_0)\in\mathcal{X}_T(u_0)$.

\smallskip

 First, for any $T>0$, by the arguments in Steps 1 and 2, the mapping $\mathcal{T}:\mathcal{E}_T(u_0)\to\mathcal{E}_T(u_0)$ is continuous and compact. Then,  by Schauder fixed-point theorem, there is $u^*(\cdot,\cdot)\in \mathcal{E}_T(u_0)$ such that
$$
U(t,x;u^*)=u^*(t,x)\quad \forall\, t\in [0,T],\,\, x\in\RR.
$$
It is clear that 
$$
u(t,x;u_0)=u^*(t,x),\quad v(t,x;u_0)=V(t,x;u^*)\quad \forall\, t\in [0,T_{\max}(u_0))\cap [0,T],\,\, x\in\RR.
$$
Note that
$$
\limsup_{t\to T_{\max}(u_0)-}\|u(t,\cdot;u_0)\|_\infty=\infty \quad {\rm if}\quad T_{\max}(u_0)<\infty.
$$
It then follows that  $T<T_{\max}(u_0)$ and $u(\cdot,\cdot;u_0)\in \mathcal{E}_T$
for any $T<\infty$. Therefore, $T_{\max}(u_0)=\infty$ and $u(t,x;u_0)\le \max\{1,\sup_{x\in\RR}u_0(x)\}\quad \forall\, t\ge 0,\,\, x\in\RR$,  i.e., \eqref{upper-bound-eq1} holds.

In the following, we show that \eqref{upper-limit-eq1} holds. 
To this end,  we first  prove that if $\bar u(t_0)>1$ for some $t_0>0$, then
$\bar u(t)$  is  non-increasing on $[0,t_0)$. 
Suppose that $t_0\in (0,\infty)$ is such that $\bar u(t_0)>1$.  Note that $\bar u(t)$ is continuous in $t\in [0,\infty)$.
Assume that there is $t_1\in [0,t_0)$ such that  $\bar u(t_1)<\bar u(t_0)$. Then there is $t_2\in [t_1,t_0)$ such that $1\le \bar u(t_2)<\bar u(t_0)$.  By the arguments of \eqref{upper-bound-eq1}, $\bar u(t_0)\le \bar u(t_2)$, which is a contradiction.
Hence, for any $t\in [0,t_0)$, $\bar u(t)\ge \bar u(t_0)>1$. Then, for any $t_1<t_2$,   by the arguments in Proposition 
\ref{global-existence-prop}(1) again, 
$\bar u(t_2)\le \bar u(t_1)$. Therefore,  $\bar u(t)$ is non-increasing on $[0,t_0)$.

Next, we prove that
\eqref{upper-limit-eq1} holds by contradiction. 
Assume that \eqref{upper-limit-eq1} does not hold. 
Then there are $t_n\to \infty$ and $x_n\in\RR$ such that $\lim_{n\to\infty} u(t_n,x_n)$ exists, and 
$$
\lim_{n\to\infty}u(t_n,x_n;u_0)>1.
$$
 By the above arguments,  $\bar u(t)$ is non-increasing on $[0,\infty)$. Hence
$$
\bar u(t)>1\quad \forall\, t>0\quad {\rm and}\quad u_+:=\lim_{t\to\infty}\bar u(t)>1.
$$
Let $\tilde x_n\in\mathbb{R}$ be such that 
$$
|u(t_n,\tilde x_n;u_0)-\bar u(t_n)|<\frac{1}{n}\quad {\rm for}\quad n=1,2,\cdots.
$$
Without loss of generality, we may assume that
the limit $(u^*(t,x),v^*(t,x)):=\lim_{n\to\infty} (u(t+t_n$, $x+\tilde x_n;u_0), v(t+t_n,x+\tilde x_n;u_0))$ exists locally uniformly in $t\in\mathbb{R}$ and $x\in\mathbb{R}$,  and
$$
u^*_t=u^*_{xx}-\chi m (u^*)^{m-1} u^*_x v^*_x -\chi (u^*)^m (v^*-(u^*)^\gamma)+u^*(1-(u^*)^\alpha),\quad x\in\mathbb{R},
$$
and
$$
0=v^*_{xx}-v^*+(u^*)^\gamma.
$$
Then, by the monotonicity of $\bar u(t)$, there holds
$$
\bar u^*(t):=\sup_{x\in\mathbb{R}}u^*(t,x) \le u_+(>1) \quad \forall\, t\in\mathbb{R},
\quad {\rm and}\quad 
u^*(0,0)=u_+.
$$
Hence $u^*(t,x)$ achieves its maximum $u_+$ at $(0,0)$.
By comparison principle for elliptic equations,
$$
v^*(t,x)\le u_+^\gamma \quad \forall\, t\in\mathbb{R},\,\, x\in\mathbb{R}.
$$
It then follows that
$$
0\le   u^*_t(0,0)=u^*_{xx}(0,0)-\chi (v^*(0,0)-u_+^\gamma)+u_+(1-u_+^\alpha)<0,
$$
which is a contradiction.  Therefore    \eqref{upper-limit-eq1} holds.  Proposition \ref{global-existence-prop}(1) is thus proved.
\end{proof}

\begin{proof} [Proof of Proposition \ref{global-existence-prop}(2)]
Assume that the condition (ii) holds, that is, 
$\chi>0$ and $\alpha>m+\gamma-1$ or  $0<\chi<\min\{ \frac{2m-1}{m-1}, \frac{m+\gamma -1}{\gamma-1}\}$ and $\alpha=m+\gamma-1$.

\smallskip

We first prove $T_{\max}(u_0)=\infty$ and \eqref{upper-bound-eq2} holds.
The idea to do so  is described as follows:
 First, we  prove $\|u(t,\cdot;u_0)\|_{L_{\rm loc}^p(\RR)}$ stays bounded for some $p>\max\{1,m,\gamma\}$, next we  prove that  $ \|\nabla v(t,\cdot;u_0)\|_\infty$ stays bounded, and then  we prove that $\|u(t,\cdot;u_0)\|_\infty$ stays bounded. We provide the detailed proof in three steps.

\smallskip

\noindent{\bf Step 1.} In this step, we prove  that $\|u(t,\cdot;u_0)\|_{L_{\rm loc}^p(\RR)}$ stays bounded. 

\smallskip

To this end,  first choose $\kappa_1$ to be sufficiently small (to be determined later),  and let
     $\psi\in C^\infty(\R^n)$  be as in \eqref{psi-eq00}.
 To prove $\|u(t,\cdot;u_0)\|_{L_{\rm loc}^p(\RR)}$ stays bounded, it then suffices to prove $\int_\Omega u^p(t,x;u_0)\psi(x-x_0)dx$ stays bounded uniformly in $x_0\in\RR$.

 Note that, for any  $p>1$.
    we have
    \begin{align*}
\frac{1}{p}\frac{d}{dt}\int_{\RR} u^p \psi dx&=\int_{\RR}u^{p-1}\psi\Delta u-\chi\int_{\RR}u^{p-1}\psi\nabla \cdot(u^m \nabla v)dx+\int_{\RR}u^p \psi(1- u^\alpha )dx\nonumber\\
    &=-(p -1)\int_{\RR} u^{p-2}|\nabla u|^2\psi-\int_{\RR} u^{p -1}\nabla u\cdot\nabla \psi\nonumber\\
    &\quad +\chi (p-1)\int_{\RR} u^{m+p -2} \nabla u\cdot (\nabla v)\psi
    +\chi\int_{\RR}u^{p+m-1} \nabla v\cdot\nabla \psi+\int_{\RR}u^p (1-u^\alpha)\psi.
    \end{align*}
By $\Delta v= v- u^\gamma $, there holds
    \begin{align*}
    &\quad\,\,    \chi(p-1)\int_{\RR}u^{p+m-2}\nabla u\cdot (\nabla v)\psi+\chi\int_{\RR} u^{p+m-1} \nabla v\cdot\nabla \psi\\
    &= -\frac{\chi(p-1)}{p+m-1}\int_{\RR}u^{p+m-1}  \nabla v\cdot\nabla \psi-\frac{\chi(p-1)}{p+m-1}\int_{\RR}u^{p+m-1}  (\Delta v) \psi+\chi\int_{\RR} u^{p+m-1} \nabla v\cdot\nabla \psi\\
    &=\frac{\chi m}{p+m-1}\int_{\RR}
    u^{p+m-1} \nabla v\cdot\nabla \psi-\frac{\chi(p-1)}{p+m-1}\int_{\RR} u^{p+m-1} v\psi+\frac{\chi(p-1)}{p+m-1}\int_{\RR} u^{p+m+\gamma-1}\psi\\
    &\le \frac{\chi m \kappa_1}{p+m-1}\int_{\RR}
    u^{p+m-1} |\nabla v|\psi  +\frac{\chi(p - 1)}{p+m-1}\int_{\RR} u^{p+m+\gamma -1}\psi.
    \end{align*}
We then have
 \begin{align}
\label{prop1-2-eq1}
\frac{1}{p}\frac{d}{dt}\int_{\RR} u^p \psi dx&\le -(p -1)\int_{\RR} u^{p-2}|\nabla u|^2\psi
+\frac{\kappa_1}{2}\int_{\RR} u^{p-1} |\nabla u|^2 \psi +\frac{\kappa_1}{2}\int_{\RR} u^p \psi\nonumber\\
&\quad + \frac{\chi m \kappa_1}{p+m-1}\int_{\RR}
    u^{p+m-1} |\nabla v|\psi  +\frac{\chi(p - 1)}{p+m-1}\int_{\RR} u^{p+m+\gamma-1}\psi\nonumber\\
    &\quad +\int_{\RR}u^p \psi-\int_{\RR} u^{p+\alpha}\psi.
    \end{align}

In the case   $\alpha>m+\gamma -1$, fix a  $p$ such that $\max\{1,m,\gamma\}<p<m+\gamma$. In the case
that $\alpha=m+\gamma-1$ and   $0<\chi<\min\{ \frac{2m-1}{m-1}, \frac{m+\gamma -1}{\gamma-1}\}$, fix a $p$ such that  $\max\{1,m,\gamma\}<p<m+\gamma$ and 
$$
\chi<\frac{p+m-1}{p-1}.
$$
Note that
\begin{align*}
\int_{\RR} u^{p+m-1}|\nabla v|\psi \le \frac{1}{4}\int_{\RR} u^{p+\alpha} +\int_{\RR} |\nabla v|^{\frac{p+\alpha}{\alpha+1-m}}\psi.
\end{align*}
By  Lemma \ref{prelim-v-lm4},  there are $C^*_p>0$ and  $\kappa_1^*>0$ such that for any $\kappa_1<\kappa_1^*$, 
$$
\int_{\RR} |\nabla v|^{\frac{p+\alpha}{\alpha+1-m}}\psi\le C^*_p \int_{\RR} u^{\frac{\gamma (p+\alpha)}{\alpha+1-m}}\psi.
$$
We then have
\begin{align}
\label{prop1-2-eq2}
\frac{1}{p}\frac{d}{dt}\int_{\RR} u^p \psi dx&\le -(p -1)\int_{\RR} u^{p-2}|\nabla u|^2\psi
+\frac{\kappa_1}{2}\int_{\RR} u^{p-1} |\nabla u|^2 \psi +\frac{\kappa_1}{2}\int_{\RR} u^p \psi\nonumber\\
&\quad + \frac{\chi m \kappa_1}{4(p+m-1)}\int_{\RR} u^{p+\alpha}\psi+C^*_p \frac{\chi m \kappa_1}{p+m-1} \int_{\RR} u^{\frac{\gamma (p+\alpha)}{\alpha+1-m}}\psi\nonumber\\
&\quad +\frac{\chi(p - 1)}{p+m-1}\int_{\RR} u^{p+m+\gamma-1}\psi+\int_{\RR}u^p \psi-\int_{\RR} u^{p+\alpha}\psi.
    \end{align}

Next, 
in the case that $\alpha>m+\gamma -1$, it holds 
$$
\frac{\gamma(p+\alpha)}{\alpha+1-m}< p+\alpha.
$$ 
By Young's inequality, for any $\epsilon>0$, there is $C_\epsilon>0$ such that
$$
C^*_p \frac{\chi m \kappa_1}{p+m-1} \int_{\RR} u^{\frac{\gamma (p+\alpha)}{\alpha+1-m}}\psi\le \epsilon \int_{\RR} u^{p+\alpha}\psi +C_\epsilon \int_{\RR}\psi,
$$
and
$$
\frac{\chi(p - 1)}{p+m-1}\int_{\RR} u^{p+m+\gamma-1}\psi\le \epsilon \int_{\RR} u^{p+\alpha}\psi +C_\epsilon\int_{\RR}\psi.
$$
Choose  $\kappa_1<\min\{\kappa_1^*, 2(p-1), \frac{p+m-1}{\chi m}\}$ and $\epsilon=\frac{1}{8}$. Then by \eqref{prop1-2-eq2},
\begin{align}
\label{prop1-2-eq2-1}
\frac{1}{p}\frac{d}{dt}\int_\RR u^p \psi dx\le 2C_\epsilon \int_{\RR} \psi+\big(1+\frac{\kappa_1}{2}\big)\int_{\RR}u^p\psi -\frac{1}{2}\int_{\RR} u^{p+\alpha}\psi.
\end{align}

In the case that  $\alpha=m+\gamma-1$,  
$$
\frac{\gamma(p+\alpha)}{\alpha+1-m}=p+\alpha.
$$ 
By \eqref{prop1-2-eq2}, we
have
 \begin{align*}
\frac{1}{p}\frac{d}{dt}\int_{\RR} u^p \psi dx&\le -(p -1)\int_{\RR} u^{p-2}|\nabla u|^2\psi
+\frac{\kappa_1}{2}\int_{\RR} u^{p-1} |\nabla u|^2 \psi +\big(1+\frac{\kappa_1}{2}\big)\int_{\RR} u^p \psi\nonumber\\
&\quad + \Big(\frac{\chi m \kappa_1}{4(p+m-1)}+C^*_p \frac{\chi m \kappa_1}{p+m-1} +\frac{\chi(p - 1)}{p+m-1}-1\Big)\int_{\RR} u^{p+\alpha}\psi.
    \end{align*}
Note that $\frac{\chi(p - 1)}{p+m-1}-1<0$. Hence we can 
choose $0<\kappa_1<\min\{\kappa_1^*,2(p-1)\}$ such that
$$
\Big(\frac{\chi m \kappa_1}{4(p+m-1)}+C^*_p \frac{\chi m \kappa_1}{p+m-1} +\frac{\chi(p - 1)}{p+m-1}-1\Big)<0.
$$
This together with \eqref{prop1-2-eq2} implies that   there is  $\lambda>0$ such that
\begin{align}
\label{prop1-2-eq2-2}
\frac{1}{p}\frac{d}{dt}\int_{\RR} u^p \psi dx\le 
(1+\kappa_1/2)\int_{\RR} u^p\psi -\lambda \int_{\RR} u^{p+\alpha}\psi.
\end{align}

By \eqref{prop1-2-eq2-1} in the case that $\alpha>m+\gamma-1$ and \eqref{prop1-2-eq2-2} in the case $\alpha=m+\gamma-1$, we have 
\begin{equation*}
\sup_{t\in [0,T_{\max}(u_0))}\int_{\RR} u^p\psi<\infty.
\end{equation*}
Moreover, 
by replacing $\psi(\cdot)$ by $\psi(\cdot-x_0)$ in the above arguments,  there is $p>\max\{1,m,\gamma\}$ such that
\begin{equation}
\label{lp-eq1}
\sup_{t\in [0,T_{\max}(u_0)),x_0\in\RR} \int_{\RR} u^p(t,x;u_0)\psi(x-x_0)dx<\infty.
\end{equation}

\noindent{\bf Step 2.} In this step, we  prove that  $ \|\nabla v(t,\cdot;u_0)\|_\infty$ stays bounded.

\smallskip

To then end,
 let  $p'=\frac{p}{\gamma}$. Then $p'>1$ and by \eqref{lp-eq1}, there holds
$$
\sup_{t\in [0,T_{\max}(u_0)),x_0\in\RR} \int_{\RR} \big(u^\gamma(t,x;u_0)\big)^{p'}\psi(x-x_0)<\infty.
$$
This together with   \cite[Theorem 9.11]{GiTr}  implies that
\begin{equation}
\label{lp-eq2}
\sup_{t\in [0,T_{\max}(u_0))} \|\nabla v(t,\cdot;u_0)\|_\infty <\infty.
\end{equation}

\noindent{\bf Step 3.} In this step, 
 we prove that  $T_{\max}(u_0)=\infty$ and $\|u(t,\cdot;u_0)\|_\infty$ stays bounded.

\smallskip

  To this end, let  $w=u\psi$. Then $w$ satisfies 
\begin{align*}
w_t&=w_{xx}-2(u\psi_x)_x+3 u\psi_{xx}-\chi (u^m v_x\psi)_x+\chi u^m v_x\psi_x+u(1-u^\alpha)\psi.
\end{align*}
It then follows that
\begin{align}
\label{lp-eq3}
w(t,\cdot)&=\underbrace{e^{(\Delta -I) t} w(0,\cdot)}_{I_1}-\underbrace{\int_0^ t e^{(\Delta -I)(t-s)}\big(2u\psi_x +\chi u^m v_x\psi\big)_x ds}_{I_2}\nonumber\\
&\quad +\underbrace{\int_0^t e^{(\Delta -I)(t-s)} \big(3 u \psi_{xx}+\chi u^m v_x\psi_x +2 u \psi -u^{1+\alpha}\psi\big)}_{I_3}.
\end{align}
It is clear that
\begin{equation}
\label{lp-eq4}
\|I_1\|_\infty\le \|w(0,\cdot)\|_\infty.
\end{equation}
Since $\alpha+1> m$,  there is $M'>0$ such that
$$
3 u \psi_{xx}+\chi u^m v_x\psi_x +2 u \psi -u^{1+\alpha}\psi\le 3\kappa_1 u \psi+\chi \kappa_1\|v_x\|_\infty u^m \psi +2 u\psi -u^{1+\alpha} \psi\le M'.
$$ 
Therefore, 
\begin{equation}
\label{lp-eq5}
I_3 \le M'\int_0^t \|e^{(\Delta-I)(t-s)}\|_\infty ds\le M'
\end{equation}
Let $p''=\frac{p}{m}$. Then $p''>1$. By Lemma \ref{prelim-semigroup-lm1}, \eqref{lp-eq1}, and \eqref{lp-eq2},   there  are $C_p''$ and $M''$  such that 
\begin{align}
\label{lp-eq6}
\|I_2\|_\infty&\le C_p'' \int_0 ^t \big(t-s\big)^{-\frac{1}{2}-\frac{1}{2p''}} e^{-\lambda (t-s)} \|2u\psi_x+\chi u^m v_x\psi\|_{L^{p''}} ds \nonumber\\
&\le 2 C_p'' \big(2\chi \kappa_1+\chi \|v_x\|_\infty\big)\int_0 ^t \big(t-s\big)^{-\frac{1}{2}-\frac{1}{2p''}} e^{-\lambda (t-s)}  \Big(\int_{\RR} (u^{p''}\psi +u^{mp''} \psi)\Big)^{1/p''} ds\nonumber\\
&\le M''.
\end{align}
It then  follows  from \eqref{lp-eq3}-\eqref{lp-eq4} that
$$ 0\le w(t,x)= u(t,x)\psi(x) \le \|w(0,\cdot)\|_\infty +M'+M''\quad \forall\, t\in [0,T_{\max}), \,\, x\in\RR.
$$
Replaying $\psi(\cdot)$ by $\psi(\cdot -x_0)$ in the above arguments yields
that
$$
\sup_{t\in (0,T_{\max}(u_0)), x_0\in\RR}\|u(t,\cdot) \psi(\cdot-x_0)\|_\infty <\infty,
$$
which implies that  $T_{\max}(u_0)=\infty$ and 
$
\limsup_{t\to \infty}\|u(t,\cdot;u_0)\|_\infty<\infty,
$ i.e., \eqref{upper-bound-eq2} holds.

In the following, we prove that \eqref{upper-limit-eq2} holds. 
First,  note that
\begin{align*}
u_t&=u_{xx}-\chi m u^{m-1} u_x v_x-\chi u^m (v-u^\gamma)+u(1-u^\alpha)\\
&\le u_{xx}-\chi m u^{m-1} u_x v_x +u (1- u^\alpha +\chi u^{m+\gamma -1}).
\end{align*}
Let $\bar u(t;u_0)$ be the solution of the following ODE:
\begin{equation*}
\begin{cases}
\frac{d \bar u}{dt}=\bar u(1-\bar u^\alpha +\chi \bar u^{m+\gamma-1})\cr
\bar u(0)=\sup_{x\in\mathbb{R}} u_0(x).
\end{cases}
\end{equation*}
By comparison principle for parabolic equations,
$$
u(t,x;u_0)\le \bar u(t;u_0)\quad \forall\, t\ge 0,\,\, x\in\mathbb{R}.
$$
It then suffices to prove that
\begin{equation}
\label{upper-limit-eq3}
\limsup_{t\to\infty} \bar u(t;u_0)\le \Big(\frac{1}{1-\chi}\Big)^{\frac{1}{\alpha}}.
\end{equation}

Assume that \eqref{upper-limit-eq3} does not hold. Then there are $0<t_1<t_2<\cdots <t_n<\cdots$ with $t_n\to\infty$  such that
$$
\lim_{n\to\infty}\bar u(t_n;u_0)>\Big(\frac{1}{1-\chi}\Big)^{\frac{1}{\alpha}}.
$$
Without loss of generality, we may assume that
\begin{equation}
\label{aaux-eq1}
\bar u(t_n;u_0)\ge \Big(\frac{1}{1-\chi}\Big)^{\frac{1}{\alpha}}+\epsilon_0\quad\forall\, n\ge 1
\end{equation}
for some $\epsilon_0>0$. 
Note that for $ u>  \Big(\frac{1}{1-\chi}\Big)^{\frac{1}{\alpha}}$,  
$$
1-u^\alpha +\chi u^{m+\gamma -1}\le 1-u^\alpha +\chi u^\alpha=1-(1-\chi) u^\alpha< 0.
$$
Hence at any $t_0$ with $\bar u(t_0;u_0)> \Big(\frac{1}{1-\chi}\Big)^{\frac{1}{\alpha}}$, 
$\bar u(t;u_0)$ is decreasing for $t$ near $t_0$. This together with \eqref{aaux-eq1} implies that
$u(t,;u_0)$ is decreasing on $[t_1,\infty)$.
Note also that 
$$
1-\bar u^\alpha (t_n;u_0)+\chi \bar u^{m+\gamma -1}(t_n;u_0)\le 1-(1-\chi) \Big[ \Big(\frac{1}{1-\chi}\Big)^{\frac{1}{\alpha}}+\epsilon_0\Big]^\alpha<0\quad \forall\, n\ge 0.
$$
It then follows that there is $\delta_0>0$ such that
$$
\bar u(t_{n+1};u_0)\le \bar u(t_n;u_0)-\delta_0\quad \forall\, n\ge 1.
$$
This implies that
$$
\lim_{n\to\infty} \bar u(t_{n+1};u_0)\le \lim_{n\to\infty} \bar u(t_n;u_0)-\delta_0,
$$
which is a contradiction. Hence \eqref{upper-limit-eq3} holds.
Proposition \ref{global-existence-prop}(2)  is thus proved.
\end{proof}

\subsection{Stabilization of positive classical solutions and proof of Proposition \ref{constant-stability-prop}}

In this subsection, we prove Proposition \ref{constant-stability-prop}.

\begin{proof}[Proof of Proposition \ref{constant-stability-prop}(1)]

Assume that $\chi\le 0$ and $\inf_{x\in\mathbb{R}} u_0(x)>0$.
 Put
$$
\bar u(t)=\sup_{x\in\RR} u(t,x;u_0)\quad {\rm and}\quad \underline u(t)=\inf_{x\in\RR} u(t,x;u_0).
$$
By the similar  arguments of \eqref{upper-limit-eq1},  we can prove that,  if $0<\underline u(t_0)<1$ for some $t_0>0$, then
$\underline u(t)$ is non-decreasing on $[0,t_0)$, and that
$$
\liminf_{t\to\infty} \underline u(t)\ge 1.
$$
This together with \eqref{upper-limit-eq1} implies that \eqref{constant-stability-eq} holds.
Proposition \ref{constant-stability-prop}(1) is thus proved.
\end{proof}

\begin{proof}[Proof of Proposition \ref{constant-stability-prop}(2)]
Assume that  $0<\chi<\frac{1}{2}$,  $\alpha\ge m+\gamma-1$, and  $\inf_{x\in\mathbb{R}}u_0(x)>0$.
We adopt rectangle idea developed in \cite{GaSaTe}
to prove that  \eqref{constant-stability-eq} holds.
This idea can be described  as follows: Fix an $\epsilon\in (0,\min\{\inf_{x\in\RR} u_0(x),1\})$. 
 Consider the following system of ODEs,
$$
\begin{cases}
\overline U_t=\chi \overline U^m (\overline U^\gamma -\underline U^\gamma)+ \overline U(1-\overline U^\alpha)\cr
\underline U_t=\chi \underline U^m (\underline U^\gamma-\overline U^\gamma)+\underline U(1-\underline U^\alpha)\cr
\overline U(0)=\max\{\|u_0\|_\infty, 1\}+\epsilon,\quad \underline U(0)=\min\{\inf_{x\in\RR}u_0(x),1\}-\epsilon.
\end{cases}
$$
Let
$$
\underline u(t):=\inf_{x\in\mathbb{R} }u(t,x;u_0),\quad \bar u(t):=\sup_{x\in\mathbb{R} }u(t,x;u_0).
$$
We prove that
\begin{equation}
\label{ineq-eq1}
0<\underline U(t)<\underline u(t)\le \bar u(t)<  \overline U(t)\quad \forall\, t\in [0,\infty),
\end{equation}
and
\begin{equation}
\label{ineq-eq2}
\lim_{t\to\infty} \overline U(t)=\lim_{t\to\infty}\underline U(t)=1.
\end{equation}

First, by the arguments in \cite[Lemmas 3.1, 3.2]{GaSaTe}, $(\underline U(t),\overline U(t))$ exists for all $t\ge 0$,
$$
0<\underline U(t)<1<\overline U(t),\quad \forall\, t\ge 0,
$$
and
$$
\lim_{t\to\infty}\underline U(t)=\lim_{t\to\infty }\overline U(t)=1.
$$
Hence \eqref{ineq-eq2} holds.

Next, we prove \eqref{ineq-eq1} holds. Assume by contradiction that \eqref{ineq-eq1} 
does not hold. Since \eqref{ineq-eq1} holds for $0\le t\ll 1$. Then there is $t_0>0$ such that
\eqref{ineq-eq1} holds for $0\le t<t_0$, and there is $\{x_n\}\subset\RR$ such that
$$
\lim_{n\to\infty} u(t_0,x_n;u_0)=\underline U(t_0)\quad {\rm or}\quad \lim_{n\to\infty} u(t,x_n;u_0)=\overline U(t_0).
$$
Assume that $\lim_{n\to\infty} u(t_0,x_n;u_0)=\underline U(t_0)$.  Without loss of generality, we may assume that
$$
u^*(t,x):=\lim_{n\to\infty}u(t,x+x_n;u_0), \quad v^*(t,x):=\lim_{n\to\infty} v(t,x+x_n;u_0)
$$
exist locally uniformly in $(t,x)\in (0,\infty)\times \RR$, moreover, $u^*(t,x)$ satisfies
\begin{align*}
u^*_t&=u^*_{xx}-\chi m (u^*)^{m-1} u^*_x v^*_x+u^* \Big(1-(u^*)^\alpha +\chi\big( (u^*)^{m+\gamma -1}-(u^*)^{m-1} v^*\big)\Big)\\
&\ge   u^*_{xx}-\chi m (u^*)^{m-1} u^*_x v^*_x +\chi ((u^*)^{m+\gamma}- (u^*)^m \overline U^{\gamma}(t))+ u^*(1-(u^*)^\alpha),\quad 0<t\le t_0,\quad x\in\RR.
\end{align*}
Note that  $w=\underline U(t)$ satisfies
$$
w_t=w_{xx}-\chi m w^{m-1} w_{x} v^*_x + \chi(w^{m+\gamma}-w^m \overline U^{\gamma}(t))+w(1-w^{\alpha})
$$
and
$$
\inf_{x\in\RR} u^*(t_0/2,x )>\underline U(t_0/2).
$$
Then by strong comparison principle for parabolic equations, we have
$$
u^*(t,x)>\underline U(t)\quad \forall\, t\in [t_0/2,t_0],\,\, x\in\RR.
$$
This implies that
$$
\underline U(t_0)< u^*(t_0,0)=\lim_{n\to\infty} u(t_0,x_n;u_0)=\underline U(t_0),
$$
which is a contradiction.
Hence, it is not possible that $\lim_{n\to\infty} u(t_0,x_n)=\underline U(t_0)$.

Similarly, it is not possible $\lim_{n\to\infty} u(t_0,x_n)=\overline U(t_0)$. Then we must have \eqref{ineq-eq1} holds. By \eqref{ineq-eq1} and \eqref{ineq-eq2}, we have
$$
\lim_{t\to\infty}\|u(t,\cdot;u_0)-1\|_\infty=0.
$$
 Proposition \ref{constant-stability-prop}(2) is thus proved.
\end{proof}

\section{Existence of traveling wave solutions and proof of Theorem \ref{monotone-thm}}
\label{existence-section}

In this section, we study the existence of traveling wave solutions of \eqref{main-eq} connecting
$(1,1)$ and $(0,0)$, and prove Theorem \ref{monotone-thm}.

To study traveling wave solutions of \eqref{main-eq} with speed $c$, we introduce the moving coordinate
$(t,x-ct)$. Note that in this moving coordinate, \eqref{main-eq} becomes
\begin{equation}
\label{main-moving-eq1}
\begin{cases}
u_t=u_{xx}+cu_x-\chi(u^m v_x)_x +u(1-u^\alpha),\quad &x\in\RR\cr
0=v_{xx}-v+u^\gamma,\quad&x\in\RR.
\end{cases}
\end{equation}
It is clear that if $(u,v)=(\phi(x),\psi(x))$ is a stationary solution of \eqref{main-moving-eq1} connecting $(1,1)$ and $(0,0)$, then
$(u,v)=(\phi(x-ct),\psi(x-ct))$ is a traveling wave solution of \eqref{main-eq} connection $(1,1)$ and $(0,0)$ with speed $c$.

 In the following, we first present some super- and sub-solutions  of  some equations relevant to \eqref{main-moving-eq1} in Subsection 3.1, and then prove Theorem \ref{monotone-thm}(1) and (2) in Subsections
3.2 and 3.3, respectively.

\subsection{Super- and sub-solutions}

In this subsection, we present  some  super- and sub-solutions of  some equations relevant to \eqref{main-moving-eq1},  which will be used to construct traveling wave solutions in next two subsections.

We first present some  super-solutions of  some equations relevant to \eqref{main-moving-eq1}. To this end, we first introduce
 some notations.

For given  $0<\kappa<1$, $M\ge 1$, and $T>0$, let
$$
U_{\kappa, M}^+=\min\{M,e^{-\kappa x}\},
$$
and
$$
\mathcal{E}_{\kappa,M,T}=\{u\in C\big([0,T], C_{\rm unif}^b(\RR)\big)\,|\, 0\le u(t,x)\le U_{\kappa,M}^+(x)\quad \forall\, t\in [0,T],\,\, x\in\RR\}.
$$
For given  $u\in\mathcal{E}_{\kappa,M,T}$, let 
$$V(t,x)=V(t,x;u):=\Psi(x; u^\gamma(t,\cdot), 1,1),
$$
where $\Psi$ is as in \eqref{psi-eq1} with $\lambda=\mu=1$,
and define
$$
\mathcal{A}(W;u)=W_{xx}+cW_x-\chi m W^{m-1}  V_x W_x + W\Big(1-\chi W^{m-1} V -\big(W^\alpha -\chi  W^{m+\gamma-1}\big)\Big)
$$
for $W\in C^2(\RR)$ with $W\ge 0$, where
$$
c=\kappa+\frac{1}{\kappa}.
$$


\begin{lemma}[Super-solutions]
\label{prelim-super-lm} 
Assume that one of the following sets of conditions is satisfied:
\begin{itemize}
\item[(i)]  $\alpha\le m+\gamma -1$,  $\chi\le 0$, 
   $0<\kappa <1$ is such that $c:=\kappa+\frac{1}{\kappa}>c_{\chi,m,\gamma}^*$,
 and
$1\le M\le M_{\chi,\kappa,m,\gamma}:=\frac{1}{\kappa \sqrt {\gamma^2+\gamma^2|\chi|+m\gamma |\chi|}}(>1)$, where $c^*_{\chi,m,\gamma}$ is as in  \eqref{cond-on-c-eq1}.
  
\item[(ii)] $\alpha=m+\gamma-1$,    $0\le \chi<\chi^*(m,\alpha,\gamma)$,  
   $0<\kappa <1$,  and   $M\ge M_\chi:=\big( \frac{1}{1-\chi}\big)^{1/\alpha}$, where $\chi^*(m,\alpha,\gamma)$ is as in \eqref{chi-star-eq}.
\end{itemize}
Then 
for any $u\in \mathcal{E}_{\kappa,M,T}$, the following hold:
\begin{itemize}
\item[(1)]  $W= e^{-\kappa x}$ is a super-solution of
$$
W_t=\mathcal{A}(W;u), \quad t\in (0,T),\, \,\, x\ge -\frac{\ln M}{\kappa}.
$$

\item[(2)]   $W=M$ is a super-solution of
$$
W_t=\mathcal{A}(W;u), \quad t\in (0,T),\,\,\, x\in\RR.
$$
\end{itemize}
\end{lemma}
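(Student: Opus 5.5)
The plan is a direct computation: substitute each candidate into $\mathcal{A}(\cdot;u)$, show the result is $\le 0$ on the stated region, and control the nonlocal terms $V$ and $V_x$ with Lemmas \ref{prelim-v-lm1}--\ref{prelim-v-lm3}. Since $W$ does not depend on $t$, being a super-solution just means $\mathcal{A}(W;u)\le 0$. Throughout, $u\in\mathcal{E}_{\kappa,M,T}$ gives
\[
0\le u^\gamma\le \min\{M^\gamma,e^{-\gamma\kappa x}\},
\]
and hence $0\le V\le M^\gamma$ by the comparison principle.

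\emph{Part (2), $W=M$.} Here
\[
\mathcal{A}(M;u)=M\big(1-\chi M^{m-1}V-M^\alpha+\chi M^{m+\gamma-1}\big).
\]
In case (i), write $\chi=-|\chi|$ and use $V\le M^\gamma$. The two chemotaxis terms combine to $|\chi|M^{m-1}(V-M^\gamma)\le 0$, so $\mathcal{A}(M;u)\le M(1-M^\alpha)\le 0$ because $M\ge 1$. In case (ii), $\chi\ge0$, so I drop $-\chi M^{m-1}V\le 0$. Using $\alpha=m+\gamma-1$, this gives $\mathcal{A}(M;u)\le M\big(1-(1-\chi)M^\alpha\big)\le 0$ because $M\ge M_\chi$.

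\emph{Part (1), $W=e^{-\kappa x}$.} Since $c=\kappa+1/\kappa$, the linear part vanishes: $W_{xx}+cW_x+W=(\kappa^2-c\kappa+1)W=0$. Using $W_x=-\kappa W$, what remains is
\[
\mathcal{A}(W;u)=\chi m\kappa W^mV_x-\chi W^mV+\chi W^{m+\gamma}-W^{1+\alpha}.
\]
In case (ii) ($\chi\ge 0$, $\alpha=m+\gamma-1$), I drop $-\chi W^mV\le 0$. I then bound $V_x$ from above by the second term of \eqref{psi-eq3}:
\[
V_x\le \tfrac12 e^{x}\int_x^\infty e^{-y}e^{-\gamma\kappa y}\,dy=\frac{W^\gamma}{2(1+\gamma\kappa)}.
\]
This yields $\mathcal{A}\le W^{m+\gamma}\big[\chi\big(1+\tfrac{m\kappa}{2(1+\gamma\kappa)}\big)-1\big]$. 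The bracket is negative for all $\kappa\in(0,1)$ once $\chi<\chi^*$; if this simple bound does not reproduce the exact constant in \eqref{chi-star-eq}, I would refine the estimate of $V_x$, for instance by keeping part of $-\chi W^mV$. In case (i), the terms are $-|\chi|m\kappa W^mV_x+|\chi|W^mV-|\chi|W^{m+\gamma}-W^{1+\alpha}$. I use $|V_x|\le V$ (Lemma \ref{prelim-v-lm2}) and, from Lemma \ref{prelim-v-lm3} applied to $u^\gamma$ with exponent $\gamma\kappa$, the bound $V\le \frac{e^{-\gamma\kappa x}}{1-\gamma^2\kappa^2}$; this needs $\gamma\kappa<1$, which follows from $c>c^*_{\chi,m,\gamma}$. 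Together these give
\[
\mathcal{A}\le |\chi|W^{m+\gamma}\Big[\frac{1+m\kappa}{1-\gamma^2\kappa^2}-1\Big]-W^{1+\alpha}.
\]
On $x\ge -\frac{\ln M}{\kappa}$ we have $W\le M$, and since $\alpha\le m+\gamma-1$ this gives $W^{m+\gamma}\le M^{m+\gamma-1-\alpha}W^{1+\alpha}$. It then suffices to have $|\chi|\big[\frac{1+m\kappa}{1-\gamma^2\kappa^2}-1\big]M^{m+\gamma-1-\alpha}\le 1$.

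The main obstacle is this last inequality: checking that it follows exactly from $c>c^*_{\chi,m,\gamma}$ (equivalently $\kappa$ small relative to $1/\sqrt{\gamma^2+\gamma^2|\chi|+m\gamma|\chi|}$) together with $M\le M_{\chi,\kappa,m,\gamma}$. If the crude factor $\frac{1}{1-\gamma^2\kappa^2}$ is too lossy, I would instead estimate $V$ and $V_x$ directly from \eqref{psi-eq2}--\eqref{psi-eq3}, splitting the integral at the point $-\frac{\ln M}{\kappa}$ where $U^+_{\kappa,M}$ switches from $M$ to $e^{-\kappa x}$. I would then choose the admissible range of $M$ so that the resulting coefficient matches the quantity $\kappa\sqrt{\gamma^2+\gamma^2|\chi|+m\gamma|\chi|}$ built into $M_{\chi,\kappa,m,\gamma}$.
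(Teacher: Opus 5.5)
Your part (2) is correct and coincides with the paper's argument. Your reduction of part (1) to $\mathcal{A}(W;u)=\chi m\kappa W^mV_x-\chi W^mV+\chi W^{m+\gamma}-W^{1+\alpha}$ is also right. The gap is that in both cases you bound $V_x$ and $V$ separately, and this throws away the cancellation the lemma depends on.

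\textbf{Case (i).} Using $|V_x|\le V$ replaces $-m\kappa|\chi|V_x+|\chi|V$ by $(1+m\kappa)|\chi|V$. Your final coefficient $|\chi|\frac{m\kappa+\gamma^2\kappa^2}{1-\gamma^2\kappa^2}$ is therefore first order in $\kappa$, whereas the hypotheses only control $\kappa^2(\gamma^2+\gamma^2|\chi|+m\gamma|\chi|)$. Your ``main obstacle'' inequality is in fact false under the hypotheses. Take $m=\gamma=\alpha=1$, $|\chi|=8$, $M=1$, $\kappa=0.24<1/\sqrt{17}$, so that $c>c^*_{\chi,m,\gamma}$; the inequality then reads $8\cdot 0.2976/0.9424\approx 2.5\le 1$. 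The loss is not in the factor $\frac{1}{1-\gamma^2\kappa^2}$, and splitting at $-\frac{\ln M}{\kappa}$ is not what is needed. The paper inserts \eqref{psi-eq3} into the combination itself:
\[
-\kappa m|\chi|V_x+|\chi|V=\frac{|\chi|}{2}(1+m\kappa)e^{-x}\int_{-\infty}^x e^{y}u^\gamma(t,y)\,dy+\frac{|\chi|}{2}(1-m\kappa)e^{x}\int_x^\infty e^{-y}u^\gamma(t,y)\,dy.
\]
Both coefficients are nonnegative because $m\kappa<1$. This is what the entry $\frac1m+m$ of $c^*_{\chi,m,\gamma}$ is for, and your argument never uses it. Bounding $u^\gamma\le e^{-\gamma\kappa y}$ in both integrals gives $|\chi|\frac{1+m\gamma\kappa^2}{1-\gamma^2\kappa^2}e^{-\gamma\kappa x}$. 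The first-order terms from the two half-lines cancel, leaving the bracket coefficient $|\chi|\frac{(m\gamma+\gamma^2)\kappa^2}{1-\gamma^2\kappa^2}$. Then $M\le M_{\chi,\kappa,m,\gamma}$ and $M\ge 1$ give $(\gamma^2+\gamma^2|\chi|+m\gamma|\chi|)\kappa^2M\le 1$. This rearranges to $|\chi|(1+m\gamma\kappa^2)M\le(1-\gamma^2\kappa^2)(1+|\chi|M)$. (At this point the paper bounds $M^{m+\gamma-1-\alpha}$ by $M$. That is immediate when $m+\gamma-1-\alpha\le 1$, and in any case for $M=1$, the only value used in Theorem \ref{monotone-thm}(1).)

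\textbf{Case (ii).} This case has the same defect. Dropping $-\chi W^mV$ and keeping only the second half of $V_x$ gives a bracket that is negative for all $\kappa\in(0,1)$ only when $\chi\le\frac{2+2\gamma}{2+2\gamma+m}$, which is not $\chi^*$. For $m=\gamma=\alpha=1$ one has $\chi^*=1$, yet with $\chi=0.9$ and $\kappa=0.99$ your bracket is about $0.12>0$. So your assertion that it is negative once $\chi<\chi^*$ is false as stated. The refinement you mention in passing is the right one, and it has to be carried out. Combining $m\kappa V_x-V$ through \eqref{psi-eq3} gives coefficient $-\frac{1+m\kappa}{2}\le 0$ on the left integral and $\frac{m\kappa-1}{2}$ on the right. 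Hence $\kappa m|\chi|V_x-|\chi|V\le\frac{|\chi|}{2}\frac{(m\kappa-1)_+}{1+\gamma\kappa}e^{-\gamma\kappa x}$. The resulting bracket equals $\chi-1$ when $\kappa\le 1/m$, and is at most $\chi\big(1+\frac{m(m-1)}{2(m+\gamma)}\big)-1$ when $\kappa>1/m$. That is exactly where $\chi^*$ in \eqref{chi-star-eq} comes from.
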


\begin{proof} The Lemma can be proved by properly modified similar arguments of \cite[Theorem 2.1(2)]{SaSh2020}.
For reader's convenience and for seeing the effects of the parameters $m,\alpha,\gamma$ , we provide a proof in the following.
We divide the proof into two cases.

\smallskip

\noindent {\bf Case 1.} Assume that the condition (i) holds, i.e.,  $\chi\le 0$,  $\alpha\le m+\gamma -1$,   $0<\kappa <1$ is such that $c=\kappa+\frac{1}{\kappa}$ satisfies \eqref{cond-on-c-eq1},
and   $1\le M\le M_{\chi,\kappa,m,\gamma}$.  
Note that, by the assumption $c=\kappa+\frac{1}{\kappa}>c^*_{\chi,m,\gamma}$,  there holds 
\begin{equation}
\label{aux-M-eq1}
\kappa< \frac{1}{\sqrt {\gamma^2+\gamma^2|\chi|+m\gamma |\chi|}}\,\,\Longrightarrow \, M_{\kappa,\chi,m,\gamma}>1.
\end{equation}

\smallskip

In this case,
$$
\mathcal{A}(W;u)=W_{xx}+cW_x+m W^{m-1} |\chi| V_x W_x + W\big(1+|\chi|W^{m-1} V -(W^\alpha +|\chi| W^{m+\gamma-1})\big),
$$
where $V(t,x)=\Psi(x;u^\gamma (t,\cdot),1,1)$.

(1) 
It suffices to prove that
$$
\mathcal{A}(e^{-\kappa x};u)\le 0\quad \forall\,  x\ge -\frac{\ln M}{\kappa}.
$$

First, note that
\begin{align}
\label{super-solu-eq1}
&\mathcal{A}(e^{-\kappa x};u)\nonumber\\
&=  (e^{-\kappa x})_{xx}+c (e^{-\kappa x})_x + m |\chi|(e^{-\kappa x})^{m-1}  (e^{-\kappa x})_xV_x \nonumber\\
&\quad + e^{-\kappa x}\Big(
1+ |\chi| (e^{-\kappa x})^{m-1} V-\big(( e^{- \kappa x})^\alpha+|\chi| (e^{-\kappa x})^{m+\gamma-1}\big)\Big)\nonumber\\
&=e^{-\kappa x}\Big(-\kappa |\chi| m  (e^{-\kappa x})^{m-1} V_x + |\chi|(e^{-\kappa x})^{m-1} V -\big(( e^{- \kappa x})^\alpha+|\chi| (e^{-\kappa x})^{m+\gamma-1}\big)\Big).
\end{align}

Next, by Lemma \ref{prelim-v-lm1}, we have
\begin{align*}
&-\kappa m |\chi| V_x+|\chi|V\\
&= -\kappa |\chi|\left[ -\frac{m }{2}e^{- x}\int_{-\infty}^xe^{ y}u^\gamma (y)dy + \frac{m }{2}e^{x}\int_{x}^{\infty}e^{- y}u^\gamma (y)dy\right]+ \frac{|\chi| }{2}\int_{\R}e^{- |x-y|}u^\gamma (y)dy
\\
&=\frac{|\chi|}{2}(m\kappa +1)e^{-x}\int_{-\infty}^ x e^y u^\gamma (y)dy +\frac{|\chi|}{2}(1-m\kappa)e^{x}\int_x^\infty e^{-y} u^\gamma (y)dy.
\end{align*}
Note that $u\le e^{-\kappa x}$ and  $c=\kappa+\frac{1}{\kappa}$. By the assumption $c>c^*_{\chi,m,\gamma}$,  we have  $\kappa \gamma <1$ and $\kappa m<1$.
It then  holds that
\begin{align}
\label{super-solu-eq3}
-\kappa m |\chi| V_x+|\chi|V&\le   \frac{|\chi|}{2}(1+m\kappa )e^{-x}\int_{-\infty}^ x e^y e^{-\kappa \gamma y} dy +  \frac{|\chi|}{2}(1-m\kappa)e^{x}\int_x^\infty e^{-y} e^{-\kappa \gamma y}dy\nonumber\\
&=\frac{|\chi|}{2}\frac{1+m \kappa}{1-\kappa\gamma } e^{-\gamma \kappa x}+\frac{ |\chi|}{2}\frac{1-m \kappa}{1+\gamma \kappa} e^{-\kappa\gamma  x}\nonumber\\
&= |\chi| \frac{1+m\gamma \kappa^2}{1-\gamma^2 \kappa^2} e^{-\kappa\gamma  x}.
\end{align}

\smallskip

Now, by \eqref{super-solu-eq1} and \eqref{super-solu-eq3},  we have
\begin{align}
\label{super-solu-eq5}
\mathcal{A} (e^{-\kappa x};u)
\le  \Big (    |\chi| \frac{1+m\gamma \kappa^2}{1-\gamma^2 \kappa^2} e^{-(m+\gamma-1)\kappa  x} -\big( e^{- \alpha \kappa x} +|\chi| e^{-(m+\gamma -1)\kappa x}\big)\Big) e^{-\kappa x}.
\end{align}
By \eqref{super-solu-eq5} and $\alpha\le m+\gamma-1$, we have 
\begin{align}
\label{super-solu-eq7}
\mathcal{A} (e^{-\kappa x};u)&\le   \Big [   |\chi|\Big( \frac{1+m\gamma \kappa^2}{1-\gamma^2 \kappa^2} -1\Big)\Big( e^{-\kappa x}\Big)^{m+\gamma -1-\alpha} -1\Big] e^{-(\alpha+1)\kappa x}  \nonumber\\
&\le  e^{-(1+\alpha )\kappa x}
\Big( |\chi| \frac{1+m \gamma \kappa^2}{1-\gamma^2 \kappa ^2} M -(1+|\chi| M)\Big)\,\,\, \quad {\rm if}\,\, x\ge -\frac{\ln M}{\kappa}.
\end{align}

Finally, by \eqref{aux-M-eq1}, there holds 
$$
\kappa\sqrt M\le \kappa M \le   \frac{1}{\sqrt {\gamma^2+\gamma^2|\chi|+m\gamma |\chi|}}\,\Longrightarrow\, (\gamma^2+\gamma^2|\chi|+m\gamma|\chi|)\kappa^2 M  \le 1.
$$
This implies that
\begin{align*}
(\gamma^2+\gamma^2|\chi|M)\kappa^2+m\gamma|\chi|\kappa^2 M  +|\chi| M\le 1+|\chi| M\, \Longrightarrow\,
|\chi|\big(1+m\gamma\kappa^2\big)M\le \big(1-\gamma^2\kappa^2\big)(1+|\chi|M).
\end{align*}
Hence
$$
 |\chi| \frac{1+m \gamma \kappa^2}{1-\gamma^2 \kappa ^2} M -(1+|\chi| M)\le 0.
$$
It then follows from  \eqref{super-solu-eq7} that
$$
\mathcal{A}(W;u)(x)\le 0\quad \forall\, x\ge -\frac{\ln M}{\kappa},
$$
and (1) is thus proved.

\smallskip

(2)  Note that $u\le M$. Hence 
$$
V\le M^\gamma,
$$
and
\begin{align*}
\mathcal{A}(M;u)&=M \big(1+|\chi| M^{m-1} V -(M^\alpha +|\chi| M^{m+\gamma-1})\big)\\
&\le M \big(1+|\chi| M^{m-1} M^\gamma  -(M^\alpha +|\chi| M^{m+\gamma-1})\big)\\
&\le 0.
\end{align*}
This completes the proof of (2).

\smallskip

\noindent {\bf Case 2.} Assume that the condition (ii) holds, i.e., $0\le \chi<\chi^*(m,\alpha,\gamma)$,  $\alpha=  m+\gamma-1$, 
$M\ge \Big(\frac{1}{1-\chi}\Big)^{1/\alpha}$, and  $0<\kappa <1$.

\smallskip

In this case,
$$
\mathcal{A}(W;u)=W_{xx}+cW_x- m W^{m-1} |\chi| V_x W_x + W\big(1- |\chi|W^{m-1} V -(W^\alpha -|\chi| W^{m+\gamma-1})\big),
$$
where $c=\kappa+\frac{1}{\kappa}$.

(1)  
Note that
\begin{align*}
\mathcal{A}( e^{-\kappa x};u)
&=  (e^{-\kappa x})_{xx}+c (e^{-\kappa x})_x - m |\chi|(e^{-\kappa x})^{m-1}  (e^{-\kappa x})_xV_x \\
&\quad + e^{-\kappa x}\Big(
1-  |\chi| (e^{-\kappa x})^{m-1} V-\big(( e^{- \kappa x})^\alpha-|\chi| (e^{-\kappa x})^{m+\gamma-1}\big)\Big)\\
&= \Big(\kappa |\chi| m  (e^{-\kappa x})^{m-1} V_x- |\chi|(e^{-\kappa x})^{m-1} V -\big(( e^{- \kappa x})^\alpha-|\chi| (e^{-\kappa x})^{m+\gamma-1}\big)\Big)e^{-\kappa x}.
\end{align*}

By $0\le u\le e^{-\kappa x}$ and Lemma \ref{prelim-v-lm1}, we have 
\begin{align*}
&\kappa m |\chi| V_x-|\chi|V\\
&= \kappa |\chi|\left[ -\frac{m }{2}e^{- x}\int_{-\infty}^xe^{ y}u^\gamma (y)dy + \frac{m }{2}e^{x}\int_{x}^{\infty}e^{- y}u^\gamma (y)dy\right]-|\chi| \frac{1 }{2}\int_{\R}e^{- |x-y|}u^\gamma (y)dy
\\
&=-\frac{|\chi|}{2}(m\kappa +1)e^{-x}\int_{-\infty}^ x e^y u^\gamma (y)dy +\frac{|\chi|}{2}(m\kappa-1)e^{x}\int_x^\infty e^{-y} u^\gamma (y)dy\\
&\le \frac{|\chi|}{2}\frac{(m\kappa -1)_+}{1+\kappa \gamma} e^{-\gamma \kappa x}.
\end{align*}
This  together with $\alpha= m+\gamma-1$ implies that
\begin{align*}
\mathcal{A} (e^{-\kappa x};u)&\le  \left (\frac{|\chi|}{2}\frac{(m\kappa -1)_+}{1+\kappa \gamma} e^{-(\gamma+m-1) \kappa x}  -\big( e^{- \alpha \kappa x}-|\chi| e^{-(m+\gamma -1)\kappa x}\big)\right) e^{-\kappa x}\\
&=  e^{-(1+\alpha )\kappa x}\Big[
\Big( \frac{|\chi|}{2} \frac{(m\kappa -1)_+}{1+\kappa \gamma}+|\chi|\Big)  -1\Big].
\end{align*}
Note that
\begin{align*}
\Big( \frac{|\chi|}{2} \frac{(m\kappa -1)_+}{1+\kappa \gamma}+|\chi|\Big) -1\le\begin{cases} \chi-1\quad & {\rm if}\,\,\kappa\le 1/m\cr
 \chi \Big(\frac{m-1}{2\big(1+\frac{\gamma}{m}\big)}+1\Big)-1\,\, & {\rm if}\,\, \kappa>1/m.
\end{cases}
\end{align*}
This together with $\chi<\chi^*(m,\alpha,\gamma)$ implies that
$$
\mathcal{A} (e^{-\kappa x};u)\le 0\quad \forall\,x\ge  -\frac{\ln M}{\kappa}.
$$
This completes the proof of (1).
\smallskip

(2)  Note that
\begin{align*}
\mathcal{A}(M;u)&= M\big(1- |\chi| M ^{m-1} V -(M^\alpha -|\chi| M^{m+\gamma-1})\big)\\
&\le M \big(1-(M^\alpha -|\chi| M^{m+\gamma-1})\big)\\
&=M\big(-(1-|\chi|) M^\alpha \qquad\qquad \text{(by}\,\, \alpha=m+\gamma-1)\\
&\le 0 \qquad\qquad\qquad \qquad\qquad \quad \big(\text{since}\,\, M\ge \big(\frac{1}{1-\chi}\big)^{\frac{1}{\alpha}}\big).
\end{align*}
This proves (2).
\end{proof}

In the following, we present some subsolutions of some equations relevant to \eqref{main-moving-eq1}. 
We first introduce some notations.

For given $0<\kappa<\tilde \kappa\le 1$ and  $D>0$, let 
$$
U_{\kappa,\tilde\kappa, D}(x)= e^{-\kappa x}-De^{-\tilde{\kappa}x}.
$$
Let $x^\pm _{\kappa,\tilde\kappa,D}>0$ be such that
$$
U_{\kappa,\tilde\kappa,D}(x^-_{\kappa,\tilde \kappa, D})=0.
$$
$$
U_{\kappa,\tilde \kappa,D}(x^+_{\kappa,\tilde\kappa,D})=\max_{x\in\RR} U_{\kappa,\tilde\kappa, D}(x).
$$
Observe that
$$
x^-_{\kappa,\tilde\kappa,D}=\frac{1}{\tilde\kappa -\kappa}\ln D,
$$
and
$$
x^+_{\kappa,\tilde\kappa,D}=\frac{1}{\tilde\kappa -\kappa}\ln \frac{\tilde\kappa D}{\kappa}.
$$

Let 
$$
U^-_{\kappa,\tilde\kappa,D}(x)=\begin{cases}  U_{\kappa,\tilde\kappa,D}(x^+_{\kappa,\tilde\kappa,D}) \quad &{\rm if}\quad x\le x^+_{\kappa,\tilde \kappa,D}\cr
U_{\kappa,\tilde\kappa,D} (x)\quad &{\rm if}\quad x>x^+_{\kappa,\tilde\kappa,D}.
\end{cases}
$$

Furthermore,
for given $M\ge 1$, let 
$$
K_{M, \kappa,\tilde\kappa,m,\gamma}=\begin{cases}
\big(m(\tilde k+k)+1\big) \big(M^\gamma+\frac{3}{4}\big) ,\quad &{\rm if}\quad \gamma\kappa =1\cr\cr
 \big(m (\tilde \kappa+\kappa)+1\big)\cdot \frac{1}{1-\gamma^2 \kappa^2} \quad &{\rm if}\quad \gamma\kappa<1\cr\cr
 \big(m(\tilde\kappa +\kappa)+1\big)\cdot \frac{M^\gamma(\kappa^2\gamma^2-1)+\gamma\kappa}{ (\kappa^2\gamma^2-1)} ,\quad &{\rm if}\quad \gamma \kappa>1,
\end{cases}
$$
and
$$
D_{M,\kappa,\tilde\kappa,\chi,m,\gamma}:=\frac{1+|\chi|K_{\kappa,\tilde\kappa,m,\gamma}}{c\tilde \kappa-\tilde\kappa^2-1},
$$
where $c=\kappa+\frac{1}{\kappa}$.

\begin{lemma}[Sub-solutions]
\label{prelim-sub-lm}
Assume that  $0<\kappa<1$,   $\tilde \kappa$ is such that $0<\kappa<\tilde \kappa \le \min\{ (1+\alpha)\kappa,  m\kappa +1/2,  1\}$, and $M\ge 1$.
Then 
for any $u\in \mathcal{E}_{\kappa,M,T}$, the following hold:
\begin{itemize}
\item[(1)] For $D> D_{M,\kappa,\tilde\kappa,\chi,m,\gamma}$, 
{$W=U_{\kappa,\tilde\kappa, D}(x)$}  is a sub-solution of
$$
W_t= \mathcal{A}(W;u)\quad  t\in (0,T),\quad t\in (0,T),\,\,  x>x^-_{\kappa,\tilde\kappa,D}.
$$

\item[(2)]   For $0<d\le  d_{\kappa,\tilde \kappa,D,\chi}:=\min\Big\{\frac{1}{1+|\chi|}, \Big(\frac{\kappa}{\tilde\kappa D}\Big)^{\frac{\kappa}{\tilde \kappa -\kappa}}\Big(1-\frac{\kappa}{\tilde \kappa}\Big)\Big\}$, $W=d$ is a sub-solution of
$$
W_t= \mathcal{A}(W;u),\quad t\in (0,T),\,\,  x\in\RR.
$$ 
\end{itemize}
\end{lemma}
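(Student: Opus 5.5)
Part (1) reduces to showing $\mathcal{A}(U_{\kappa,\tilde\kappa,D};u)\ge 0$ for $x>x^-_{\kappa,\tilde\kappa,D}$, where $U:=U_{\kappa,\tilde\kappa,D}\in(0,e^{-\kappa x})$. Part (2) reduces to showing $\mathcal{A}(d;u)\ge 0$ on $\RR$. We treat $\chi\le0$ and $\chi>0$ together by always bounding the chemotactic terms by $|\chi|$ times their absolute values.

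\textbf{Part (1).} Since $c=\kappa+1/\kappa$, we have $\kappa^2-c\kappa+1=0$. Hence the linear part is
$$U_{xx}+cU_x+U=-D(\tilde\kappa^2-c\tilde\kappa+1)e^{-\tilde\kappa x}=D(c\tilde\kappa-\tilde\kappa^2-1)e^{-\tilde\kappa x},$$
and this is positive because $\kappa<\tilde\kappa\le 1$. It then remains to show that the nonlinear terms are bounded below by $-(1+|\chi|K)e^{-\tilde\kappa x}$.

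\emph{Logistic term.} We have $U^{1+\alpha}\le e^{-(1+\alpha)\kappa x}\le e^{-\tilde\kappa x}$ for $x\ge 0$, since $\tilde\kappa\le(1+\alpha)\kappa$ and $x^-=\frac{\ln D}{\tilde\kappa-\kappa}>0$ once $D>1$.

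\emph{Chemotactic terms.} These are $-\chi mU^{m-1}V_xU_x-\chi U^mV+\chi U^{m+\gamma}$. We bound them using Lemma \ref{prelim-v-lm2} ($|V_x|\le V$) and $|U_x|\le(\kappa+\tilde\kappa D e^{-(\tilde\kappa-\kappa)x})e^{-\kappa x}\le(\kappa+\tilde\kappa)e^{-\kappa x}$ for $x\ge x^-$; the last inequality holds because $De^{-(\tilde\kappa-\kappa)x}\le 1$ there. We also need an estimate of $V$ from $u\le\min\{M,e^{-\kappa x}\}$, done as in Lemma \ref{prelim-v-lm3} but with $u^\gamma$. Splitting according to $\gamma\kappa<1$, $=1$, $>1$ gives $V\le \frac{1}{1-\gamma^2\kappa^2}e^{-\gamma\kappa x}$ in the first case. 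In the other two cases we get a bound of the form $C(M)e^{-x}$ (with a factor $x$ in the critical case, absorbed via $M^\gamma+3/4$), matching the three cases in the definition of $K_{M,\kappa,\tilde\kappa,m,\gamma}$.

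\emph{Combining.} Altogether the chemotactic terms are at least $-|\chi|(m(\kappa+\tilde\kappa)+1)\,\bar V\,e^{-m\kappa x}$, where $\bar V$ denotes the above bound, and the positive term $\chi U^{m+\gamma}$ is discarded when $\chi<0$. To turn this into a bound by $K e^{-\tilde\kappa x}$ we need $e^{-m\kappa x}\bar V\lesssim e^{-\tilde\kappa x}$ for $x\ge0$. This is the main obstacle: the exponents must be compared carefully. The relevant decay rates are $m\kappa+\min\{\gamma\kappa,1\}$ (up to the log factor), and the hypothesis $\tilde\kappa\le m\kappa+1/2$ is what guarantees this total rate is at least $\tilde\kappa$. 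If needed, the factor $x e^{-x/2}$ in the critical case can be absorbed into the constant $3/4$. Then $\mathcal{A}(U;u)\ge e^{-\tilde\kappa x}\big(D(c\tilde\kappa-\tilde\kappa^2-1)-1-|\chi|K\big)>0$ for $D>D_{M,\kappa,\tilde\kappa,\chi,m,\gamma}$.

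\textbf{Part (2).} For constant $W=d$ the derivative terms vanish, so
$$\mathcal{A}(d;u)=d\big(1-\chi d^{m-1}V-d^\alpha+\chi d^{m+\gamma-1}\big)\ge d\big(1-|\chi|d^{m-1}\max\{V,d^\gamma\}-d^\alpha\big).$$
We use $V\le 1$ where $u\le 1$, which follows since $u\le e^{-\kappa x}$, together with $d\le\frac1{1+|\chi|}\le 1$ and $m,\alpha\ge1$. This gives $|\chi|d^{m-1}V+d^\alpha\le(|\chi|+1)d\le1$. The delicate point is that $V$ may exceed $1$ if $M>1$. Here the second constraint in $d_{\kappa,\tilde\kappa,D,\chi}$, which says $d$ is at most $U(x^+)$ (the maximum value of $U_{\kappa,\tilde\kappa,D}$), comes in. I expect the intended use is that $d\le U^-$, so this bound is needed for gluing with part (1) rather than for positivity of $\mathcal{A}(d;u)$. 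I would therefore check positivity using $V\le\frac{1}{1-\kappa^2}$-type bounds, or else restrict to $\chi$ with $V$ controlled. If that fails, I would interpret the claim with $d$ small enough that $|\chi|d^{m-1}M^\gamma+d^\alpha\le1$.
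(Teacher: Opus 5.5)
\textbf{Part (2) has a genuine gap.} Your step $|\chi|d^{m-1}V+d^\alpha\le(|\chi|+1)d$ is false for $1\le m<2$, even granting $V\le1$: for $m=1$ the term is $|\chi|V$, not $|\chi|dV$. Also, ``$V\le1$ where $u\le1$'' is not a valid pointwise statement, because $V$ is nonlocal and $u$ may equal $M$ on $x\le-\frac{\ln M}{\kappa}$. Your proposal then ends without a proof.

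For $\chi\le0$ the fix is not to symmetrize. Since $-\chi d^{m-1}V=|\chi|d^{m-1}V\ge0$,
\[
\mathcal{A}(d;u)\ge d\big(1-d^\alpha-|\chi|d^{m+\gamma-1}\big)\ge d\big(1-(1+|\chi|)d\big)\ge0,
\]
using $d\le1$, $\alpha\ge1$, $m+\gamma-1\ge1$ and $d\le\frac{1}{1+|\chi|}$. No bound on $V$ is needed; this is what the paper's ``follows directly'' amounts to. You are right that the second entry of $d_{\kappa,\tilde\kappa,D,\chi}$ is $\max U_{\kappa,\tilde\kappa,D}=U_{\kappa,\tilde\kappa,D}(x^+_{\kappa,\tilde\kappa,D})$, which is needed only to glue $U^-_{\kappa,\tilde\kappa,D}$.

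For $\chi>0$, however, no proof is possible as stated. Take $m=1$, $\chi M^\gamma>1$ and $u=\min\{M,e^{-\kappa x}\}\in\mathcal{E}_{\kappa,M,T}$. Then $V(x)\to M^\gamma$ as $x\to-\infty$, so $\mathcal{A}(d;u)(x)\to d\big(1-\chi M^\gamma-d^\alpha+\chi d^\gamma\big)<0$ for all small $d>0$. An extra hypothesis like your last one, $\chi d^{m-1}M^\gamma+d^\alpha\le1$, is required. It gives $\mathcal{A}(d;u)\ge d(1-\chi d^{m-1}M^\gamma-d^\alpha)\ge0$ via $V\le M^\gamma$. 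It is attainable for small $d$ when $M=M_\chi$, $\chi<\frac12$, $\alpha=m+\gamma-1$, since then $\chi M_\chi^\gamma\le\frac{\chi}{1-\chi}<1$. The paper's proof does not address this either.

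\textbf{Part (1)} follows the paper's argument: the linear part $D(c\tilde\kappa-\tilde\kappa^2-1)e^{-\tilde\kappa x}$, the three-case bound on $V$ for $x\ge0$, $|V_x|\le V$, $|(U_{\kappa,\tilde\kappa,D})_x|\le(\kappa+\tilde\kappa)e^{-\kappa x}$ on $x\ge x^-_{\kappa,\tilde\kappa,D}$, and absorption into $K_{M,\kappa,\tilde\kappa,m,\gamma}$. Two corrections are needed.

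First, a sign slip: for $\chi<0$ the term $\chi U^{m+\gamma}$ is negative and must be kept, and it is $-\chi U^mV\ge0$ that you drop; for $\chi>0$ it is the reverse. The ``$+1$'' in $K$ covers whichever term is kept, so your final bound survives.

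Second, and more seriously, for $\gamma\kappa<1$ the chemotactic terms decay only like $e^{-(m+\gamma)\kappa x}$. The hypothesis $\tilde\kappa\le m\kappa+\frac12$ does not give $\tilde\kappa\le(m+\gamma)\kappa$ once $\gamma\kappa<\frac12$. What is really used is $\tilde\kappa\le(1+\alpha)\kappa\le(m+\gamma)\kappa$, i.e.\ $\alpha\le m+\gamma-1$. This holds wherever the lemma is applied, and the paper uses it tacitly too. Without it the claim fails. For $\chi<0$ and $u\equiv0$,
\[
\mathcal{A}(U_{\kappa,\tilde\kappa,D};0)=D(c\tilde\kappa-\tilde\kappa^2-1)e^{-\tilde\kappa x}-U_{\kappa,\tilde\kappa,D}^{1+\alpha}-|\chi|\,U_{\kappa,\tilde\kappa,D}^{m+\gamma},
\]
which is negative for large $x$ whenever $(m+\gamma)\kappa<\tilde\kappa$. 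This is allowed by the stated hypotheses, e.g.\ $m=\gamma=1$, $\alpha=5$, $\kappa=\frac{1}{10}$, $\tilde\kappa=\frac12$.
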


\begin{proof} The Lemma can be proved by properly modified  arguments of \cite[Theorem 2.1(3)]{SaSh2020}.
For self-completeness and reader's convenience, we provide a proof in the following.

First of all,  by Lemma \ref{prelim-v-lm1},  for  {$x\ge 0$}, we have
\begin{align*}
V(t,x)=\frac{1 }{2}e^{- x}\int_{-\infty}^0 e^{ y}u^\gamma (t,y)dy
+\frac{1}{2} e^{-x}\int_0^ x e^y u^\gamma(t,y)dy + \frac{1}{2}e^{x}\int_{x}^{\infty}e^{- y}u^\gamma (t,y)dy.
\end{align*}
This implies that for $x\ge 0$, 
\begin{equation*}
V(t,x)
\le  \begin{cases}
\Big(\frac{M^\gamma}{2}+\frac{x}{2}+\frac{1}{4}\Big) e^{-x},\quad &{\rm if}\quad  \gamma \kappa=1\cr
 \frac{1}{1-\gamma^2\kappa^2}  e^{-\gamma \kappa x},\quad &{\rm if}\quad \gamma\kappa<1\cr
\Big(\frac{M^\gamma}{2}+\frac{\gamma\kappa}{\kappa^2\gamma^2-1}\Big) e^{-x},\quad &{\rm if}\quad \gamma \kappa>1.
\end{cases}
\end{equation*}
Note that
$$
\Big(\frac{M^\gamma}{2}+\frac{x}{2}+\frac{1}{4}\Big) e^{-x/2}\le  \frac{M^\gamma}{2}+\frac{3}{4}\quad \forall\, x\ge 0.
$$
Hence, for $x\ge 0$
\begin{equation}
\label{V-estimate-eq1}
V(t,x)
\le  \begin{cases}
\Big(M^\gamma +\frac{3}{4}\Big) e^{-x/2},\quad &{\rm if}\quad  \gamma \kappa=1\cr
 \frac{1}{1-\gamma^2\kappa^2}  e^{-\gamma \kappa x},\quad &{\rm if}\quad \gamma\kappa<1\cr
\Big(\frac{M^\gamma(\kappa^2\gamma^2-1)+\gamma\kappa}{ (\kappa^2\gamma^2-1)}\Big) e^{-x},\quad &{\rm if}\quad  \gamma \kappa>1.
\end{cases}
\end{equation}
This together with Lemma \ref{prelim-v-lm2} implies that for $x\ge 0$,
\begin{equation}
\label{V-estimate-eq2}
-|V_x(t,x)|
\ge  \begin{cases}
-\Big(M^\gamma +\frac{3}{4}\Big) e^{-x/2},\quad &{\rm if}\,\, \gamma \kappa=1\cr
- \frac{1}{1-\gamma^2\kappa^2}  e^{-\gamma \kappa x},\quad &{\rm if}\quad \gamma\kappa<1\cr
-\Big(\frac{M^\gamma (\kappa^2\gamma^2-1) +\gamma\kappa}{(\kappa^2\gamma^2-1)}\Big) e^{-x},\quad &{\rm if}\,\, \gamma \kappa>1.
\end{cases}
\end{equation}
We divide the rest of the proof into two cases.

\smallskip

\noindent{\bf Case 1.} $\chi\le 0$.

\smallskip

In this case, 
$$
\mathcal{A}(W;u)=W_{xx}+cW_x+m W^{m-1} |\chi| V_x W_x + W\big(1+|\chi|W^{m-1} V -(W^\alpha +|\chi| W^{m+\gamma-1})\big),
$$
where $V(t,x)=\Psi(x;u^\gamma (t,\cdot),1,1)$

\smallskip

(1) It suffices to prove that 
$$
\mathcal{A}(U_{\kappa,\tilde\kappa,D};u)   \ge 0\quad \forall\, x\ge x_{\kappa,\tilde\kappa,D}^-.
$$
Note that  $x_{\kappa,\tilde\kappa,D}^->0$ and 
\begin{align*}
\mathcal{A}(U_{\kappa,\tilde\kappa,D};u)   
 &= (e^{-\kappa x}-D e^{-\tilde \kappa x})_{xx}+c (e^{-\kappa x}-D e^{-\tilde\kappa x})_x+ |\chi|m ( e^{-\kappa x}-De^{-\tilde\kappa x})^{m-1} V_x (e^{-\kappa x}-D e^{-\tilde \kappa x})_x \\
&\quad +( e^{-\kappa x}-De^{-\tilde \kappa x}) (1+|\chi| ( e^{-\kappa x}-De^{-\tilde\kappa x})^{m-1}  V)\\
&\quad -(e^{-\kappa x}-De^{-\tilde\kappa x})\Big ((e^{- \kappa x}-D e^{-\tilde \kappa x})^\alpha +|\chi|  (e^{-\kappa x}-D e^{-\tilde \kappa x})^{m+\gamma -1}\Big ) \\
&\ge { D\Big(c\tilde\kappa-{\tilde\kappa^2}-1\Big) e^{-\tilde \kappa x} }+{ |\chi| m (e^{-\kappa x}-De^{-\tilde \kappa x})^{m-1} \Big(\tilde\kappa D e^{-\tilde\kappa x}-\kappa e^{-\kappa x}\Big) V_x }\\
&\quad -{ (e^{-\kappa x}-De^{-\tilde\kappa x})\Big ((e^{- \kappa x}-D e^{-\tilde \kappa x})^\alpha +|\chi|  (e^{-\kappa x}-D e^{-\tilde \kappa x})^{m+\gamma -1}\Big ) }.
\end{align*}
{By \eqref{V-estimate-eq2},  for  $x\ge x^-_{\kappa,\tilde\kappa, D}$,}  we have
\begin{align*}
&{ |\chi| m (e^{-\kappa x}-De^{-\tilde \kappa x})^{m-1} \Big(\tilde\kappa D e^{-\tilde\kappa x}-\kappa e^{-\kappa x}\Big) V_x }\\
&\ge -  |\chi| m e^{-(m-1)\kappa x} e^{-\kappa x} \Big(\tilde\kappa D e^{-(\tilde\kappa-\kappa) x}+\kappa \Big) |V_x|\\
&\ge \begin{cases}
-|\chi| m(\tilde k+k) \big(\frac{M^\gamma}{2}+\frac{3}{4}\big) e^{-(m\kappa +1/2)x},\quad &{\rm if}\quad \gamma\kappa =1\cr
-\frac{|\chi| m (\tilde \kappa+\kappa)}{1-\gamma^2 \kappa^2} e^{-(m+\gamma)\kappa x}\quad &{\rm if}\quad \gamma\kappa<1\cr
-{|\chi| m(\tilde\kappa +\kappa)\Big(\frac{M^\gamma(\kappa^2\gamma^2-1)+\gamma\kappa}{ 2(\kappa^2\gamma^2-1)}\Big)} e^{-(m\kappa +1)x},\quad &{\rm if}\quad \gamma \kappa>1.
\end{cases}
\end{align*}
It then follows that
\begin{align*}
\mathcal{A} (U_{\kappa,\tilde\kappa, D};u)
&\ge \left( D\big( c\tilde\kappa-\tilde\kappa^2 -1\big) -1-|\chi| K_{M,\kappa,\tilde\kappa,m,\gamma} \right)  e^{-\tilde \kappa x}\quad \forall\, x>x_{\kappa,\tilde\kappa,D}^-.
\end{align*}
Note that 
for $\kappa<\tilde\kappa\le 1$, 
$$
c\tilde\kappa -\tilde \kappa^2 -1>0.
$$ 
Hence
\begin{align*}
\mathcal{A}(\mathcal{U}_{\kappa,\tilde\kappa,D};u)(x)&\ge 0\quad \forall\, x\ge x_{\kappa,\tilde\kappa,D}^-,\,\, D\ge D_{M,\kappa,\tilde\kappa,\chi,m,\gamma}.
\end{align*}

(2)  It follows directly.

\smallskip

\noindent {\bf Case 2.} $\chi>0$.

\smallskip

In this case,
$$
\mathcal{A}(W;u)=W_{xx}+cW_x- m W^{m-1} |\chi| V_x W_x + W\big(1- |\chi|W^{m-1} V -(W^\alpha -|\chi| W^{m+\gamma-1})\big),
$$

\smallskip

(1) Note that 
\begin{align*}
\mathcal{A}(U_{\kappa,\tilde\kappa,D};u)   
&={ D\Big(c\tilde\kappa-{\tilde\kappa^2}-1\Big) e^{-\tilde \kappa x} } -{ |\chi| m (e^{-\kappa x}-De^{-\tilde \kappa x})^{m-1} \Big(\tilde\kappa D e^{-\tilde\kappa x}-\kappa e^{-\kappa x}\Big) V_x }\\
&\quad -{ |\chi|  V\big(e^{-\kappa x}-D e^{-\tilde \kappa  x}\big)^m}\\
&\quad -{ (e^{-\kappa x}-De^{-\tilde\kappa x})\Big ((e^{- \kappa x}-D e^{-\tilde \kappa x})^\alpha -|\chi|  (e^{-\kappa x}-D e^{-\tilde \kappa x})^{m+\gamma -1}\Big ) }\\
&\ge { D\Big(c\tilde\kappa-{\tilde\kappa^2}-1\Big) e^{-\tilde \kappa x} } -{ |\chi| m (e^{-\kappa x}-De^{-\tilde \kappa x})^{m-1} \Big(\tilde\kappa D e^{-\tilde\kappa x}-\kappa e^{-\kappa x}\Big) V_x }\\
&\quad -{ |\chi|  V\big(e^{-\kappa x}-D e^{-\tilde \kappa  x}\big)^m} -{ (e^{-\kappa x}-De^{-\tilde\kappa x})\big(e^{- \kappa x}-D e^{-\tilde \kappa x}\big)^\alpha }.
\end{align*}
By \eqref{V-estimate-eq1} and \eqref{V-estimate-eq2},
we have
\begin{align*}
\mathcal{A} (U_{\kappa,\tilde\kappa, D};u)
&\ge \left( D\big( c\tilde\kappa-\tilde\kappa^2 -1\big) -1-|\chi| K_{M,\kappa,\tilde\kappa,m,\gamma} \right)  e^{-\tilde \kappa x}\\
&\ge 0\quad \forall\, x\ge x_{\kappa,\tilde\kappa,D}^-,\,\, D\ge D_{M,\kappa,\tilde\kappa,\chi,m,\gamma}.
\end{align*}

(2) It follows directly.
\end{proof}

\begin{remark}
\label{prelim-sub-rk1}
Suppose that $0<\kappa <\frac{1}{2}$, $\gamma \kappa<1$,  and $\tilde\kappa =2\kappa$.  Then
$$
\tilde c \tilde \kappa-\tilde\kappa^2-1=2c\kappa -4\kappa^2-1=2\big(\kappa+\frac{1}{\kappa}\big)\kappa -4\kappa^2 -1=1-2\kappa^2,
$$
and for any $M\ge 1$,
\begin{equation}\label{K-eq}
K_{M,\kappa,\tilde\kappa,m,\gamma}=K_{\kappa,m,\gamma}:=\frac{3 m\kappa +1}{1-\gamma^2\kappa^2},
\end{equation}
and
\begin{equation}
\label{D-eq}
D_{M,\kappa,\tilde\kappa,\chi,m,\gamma}=\frac{1-\gamma^2\kappa^2 +|\chi|\big(3m\kappa+1\big)}{(1-2\kappa^2)(1-\gamma^2\kappa^2)}\le D_{\kappa,\chi,m,\gamma}:= \frac{2\Big(1-\gamma^2\kappa^2 +|\chi|\big(3m\kappa+1\big)\Big)}{1-\gamma^2\kappa^2}.
\end{equation}
We also have
\begin{align}
\label{d-eq}
d_{\kappa,\chi,m,\gamma}&:=\min\Big\{\frac{1}{1+|\chi|}, \Big(\frac{\kappa}{\tilde\kappa D_{\kappa,\chi,m,\gamma}}\Big)^{\frac{\kappa}{\tilde \kappa -\kappa}}\Big(1-\frac{\kappa}{\tilde \kappa}\Big)\Big\}\nonumber\\
&=\min\Big\{\frac{1}{1+|\chi|},\frac{1}{4} \frac{1}{D_{\kappa,\chi,m,\gamma}}\Big\}\nonumber\\
&= \frac{1}{8}   \frac{1-\gamma^2\kappa^2} {1-\gamma^2\kappa^2 +|\chi|\big(3m\kappa+1\big)}.
\end{align}
Assume {$\kappa<\frac{1}{\gamma+ |\chi|^\sigma}$.}  Then
$$
d_{\kappa,\chi,m,\gamma}\ge \frac{1}{8}\frac{(1+\gamma)|\chi|^\sigma}{(1+|\chi|+2m|\chi|)(\gamma+|\chi|^\sigma)}.
$$
The above estimates will be used in the proof of   Theorem \ref{stability-thm}.
\end{remark}

\begin{remark}
\label{prelim-sub-rk2}
For given $M\ge 1$, $0<\kappa<1$, and $T>0$, let
$$
\widetilde {\mathcal{E}}_{\kappa,M,T}=\{u\in C\big([0,T], C_{\rm unif}^b(\RR)\big)\,|\, 0\le u(t,x)\le \widetilde U_{\kappa,M}^+(x)\quad \forall\, t\in [0,T],\,\, x\in\RR\},
$$
where
$$
\widetilde U_{\kappa, M}^+=\min\{M, Me^{-\kappa x}\}.
$$
Assume that  $0<\kappa<1$,   and  $\tilde \kappa$ is such that $0<\kappa<\tilde \kappa \le \min\{ (1+\alpha)\kappa,  m\kappa +1/2,  1\}$. 
By the similar arguments of Lemma \ref{prelim-sub-lm}, we also have that
for any $u\in \widetilde{\mathcal{E}}_{\kappa,M,T}$, the following hold:
\begin{itemize}
\item[(1)] For $D\gg 1$, 
{$W=U_{\kappa,\tilde\kappa, D}(x)$}  is a sub-solution of
$$
W_t= \mathcal{A}(W;u)\quad  t\in (0,T),\quad t\in (0,T),\,\,  x>x^-_{\kappa,\tilde\kappa,D}.
$$

\item[(2)]   For $0<d\ll 1$, $W=d$ is a sub-solution of
$$
W_t= \mathcal{A}(W;u),\quad t\in (0,T),\,\,  x\in\RR.
$$ 
\end{itemize}
This remark  will also be used in the proof of   Theorem \ref{stability-thm}.
\end{remark}

\subsection{Existence of monotone traveling wave solutions with negative sensitivity and proof of Theorem \ref{monotone-thm}(1)}

In this subsection, we prove Theorem \ref{monotone-thm}(1). Throughout this subsection, we assume that $\chi\le 0$ and $\alpha\le m+\gamma-1$.

\begin{proof}[Proof of Theorem \ref{monotone-thm}(1)]
First, for fixed   $c$ satisfying \eqref{cond-on-c-eq1},  i.e,  $c> c^*_{\chi,m,\gamma}$, let  $\kappa=\frac{c-\sqrt{c^2-4}}{2}$. For fixed
$\kappa_1$ satisfying $\kappa<\kappa_1<   \min\{(\alpha+1)\kappa,   m\kappa+1/2, 1\}$,   let $\tilde \kappa$ be such that
  $0<\kappa_1<\tilde \kappa \le \min\{(1+\alpha)\kappa,  m\kappa+1/2,  1\}$. Fix
$M\ge 1$ and  $D\ge D_{M,\kappa,\tilde\kappa,\chi,m,\gamma}$, where $D$ is as Lemma \ref{prelim-sub-lm}(1).
Let
$$
\mathcal{E}_{\kappa,M}=\{u\in C_{\rm unif}^b(\RR)\,|\, U_{\kappa,\tilde\kappa,D}^-(x) \le u(x)\le U_{\kappa,M}^+(x)\,\, \forall\, x\in\RR\},
$$
and
$$
\mathcal{E}'_{\kappa,M}=\{u\in\mathcal{E}_{\kappa,M}\,|\,  u(x_1)\ge u(x_2)\,\, \forall\, -\infty<x_1<x_2<\infty\}.
$$
It is clear that $\mathcal{E}_{\kappa,M}$ $\mathcal{E}'_{\kappa,M}$ are  convex closed subsets of $C_{\rm unif}^b(\RR)$.

Our idea to prove Theorem \ref{monotone-thm}(1)  is to construct a mapping
$\mathcal{T}_{\kappa,1}:\mathcal{E}'_{\kappa,1} \to\mathcal{E}'_{\kappa,1} $ and prove that it has a fixed point $U^*$, which gives rise to a traveling wave solution $u(t,x)=U^*(x-ct)$ of \eqref{main-eq}.
We divide the proof into four steps.
In the rest of the proof, we always assume  that $\chi<0$.

\smallskip
 
\noindent {\bf Step 1.}
In this step,   for given $u\in \mathcal{E}'_{\kappa,1} $,  let $u(t,x; U^+_{\kappa,1}, u)$ be the solution of 
\begin{equation}
\label{main-moving-eq2}
u_t=u_{xx}+cu_x-\chi m u^{m-1} u_x V_x -\chi u^m V+\chi u^{m+\gamma}
+u(1-u^\alpha)
\end{equation}
with initial condition  $u(0,x;U_{\kappa,1}^+,u)=U_{\kappa,1}^+(x)$, where
$V(x)=\Psi(x;u^\gamma(\cdot),1,1)$.
 We prove that
$$
U(x;u):=\lim_{t\to\infty} u(t,x;U_{\kappa,1}^+,u)
$$
exists, and that $U(\cdot;u)\in \mathcal{E}'_{\kappa,1}$.

\smallskip

First, 
note that $V$ is the solution of
$$
V_{xx}-V+u^\gamma(x)=0.
$$
By comparison principle for elliptic equations, there holds
$$
0\le V(x)\le 1\quad {\rm and}\quad 
V_x(x)\le 0\quad  \forall \, x\in\RR.
$$
 Let $w=u_x(t,x; U_{\kappa,1}^+, u)$. Then $w$ satisfies
\begin{align*}
w_t&= w_{xx}+\big(c+|\chi| m u^{m-1} V_x\big)w_x  +\big(|\chi| m (m-1)u^{m-2} w^2
+|\chi|u^m\big)V_x\\
&\quad +\Big(|\chi| m u^{m-1} V_{xx}+|\chi| mu^{m-1} v+\chi (m+\gamma) u^{m+\gamma -1}+1 -(\alpha+1) u^\alpha\Big)w\\
&\le w_{xx}+\big(c+|\chi| m u^{m-1} V_x\big)w_x  \\
&\quad +\Big(|\chi| m u^{m-1} V_{xx}+|\chi| mu^{m-1} v+\chi (m+\gamma) u^{m+\gamma -1}+1 -(\alpha+1) u^\alpha\Big)w\,\,\, 
  \text{(since}\,\,  \,  V_x\le 0)
\end{align*}
for $t>0$, 
and
$$
w(0,x)\le 0\quad \forall\, x\not =0.
$$
It then follows from comparison principle for parabolic equations that
\begin{equation}
\label{U-eq3}
u_x(t,x;U_{\kappa,1}^+, u)=w(t,x)\le 0\quad \forall\, t>0, \,\,  x\in\RR.
\end{equation}
By Lemmas \ref{prelim-super-lm} and \ref{prelim-sub-lm}, 
\begin{equation}
\label{U-eq4}
U_{\kappa,\tilde\kappa,D}^-(x)\le u(t,x;U_{\kappa,1}^+, u)\le U_{\kappa,1}^+(x)\quad \forall\,  t\ge 0,\,\, x\in\RR.
\end{equation}
Hence $u(t,\cdot;U_{\kappa,1}^+,u)\in\mathcal{E}'_{\kappa, 1}$ for all $t\ge 0$.

Next, by \eqref{U-eq4} and comparison principle for parabolic equations, it holds that
\begin{equation*}
U_{\kappa,\tilde\kappa,D}(x)\le u(t_2,x;U_{\kappa,1}^+,u)\le u(t_1,x;U_{\kappa,1},u)\le U_{\kappa,1}^+(x)\quad \forall\, 0\le t_1\le t_2,\,\,\, x\in\RR.
\end{equation*}
It then follows that
$$
U(x;u):=\lim_{t\to\infty} u(t,x;U_{\kappa,1}^+,u)
$$
exists. Moreover, by a priori estimates for parabolic equations, $U(x;u)$ is uniformly continuous on $\RR$. Then by \eqref{U-eq4} again, $U(\cdot;u)\in\mathcal{E}'_{\kappa,1}$.

\medskip

\noindent {\bf Step 2.} In this step, we define 
the mapping $\mathcal{T}_{\kappa,1}:\mathcal{E}'_{\kappa,1}\to \mathcal{E}'_{\kappa,1}$ by 
$$
\mathcal{T}_{\kappa,1} u=U(\cdot;u),
$$
and  prove that $\mathcal{T}_{\kappa,1}$ has a fixed point.

\smallskip

To do so, we let
$$
\mathcal{X}=C_{\rm unif}^b(\RR)
$$
be equipped with the norm
$$
\|u\|_{\mathcal{X}}=\sum_{k=1}^\infty \frac{1}{2^k}\|u\|_{L^\infty([-k,k])}.
$$
Viewing $\mathcal{E}'_{\kappa,1}$ as a subset of $\mathcal{X}$ and applying a priori estimate for parabolic equations, it is not difficult to see that the mapping $\mathcal{T}_{\kappa,1}$ is compact. 
By the arguments in \cite[Theorem 3.1]{SaSh2017-1}, $\mathcal{T}_{1,\kappa}$ is also continuous. Then by 
Schauder fixed-point theorem, there is $U^*\in\mathcal{E}'_{\kappa,1}$ such that
$$
U(\cdot;U^*)=U^*(\cdot).
$$

\smallskip

\noindent {\bf Step 3.} In this step, we prove that
$(u(t,x),v(t,x)=(U^*(x-ct), V^*(x-ct))$ is a traveling wave solution of \eqref{main-eq} satisfying the properties in the statement of 
Theorem \ref{monotone-thm}(1), where $V^*(x)=\Psi(x;U^*,1,1)$.

\smallskip

First,  it is clear that $(u,v)=(U^*(x),V^*(x))$ is a stationary solution of \eqref{main-moving-eq1}. Hence
$(u(t,x)$, $v(t,x))=(U^*(x-ct),V^*(x-ct))$ is a traveling wave solution of \eqref{main-eq} with speed $c$.
Moreover, by \eqref{U-eq4}, it holds 
\begin{equation*}
U_{\kappa,\tilde\kappa,D}^-(x)\le U^*(x)\le U_{\kappa,1}^+(x)\quad \forall\, x\in\RR.
\end{equation*}
This implies that
$$
\lim_{x\to\infty} e^{(\kappa_1-\kappa)x} \Big(\frac{U^*(x-ct)}{e^{-\kappa(x-ct)}}-1\Big)=0.
$$
 It remains to prove that
$$
\lim_{x\to -\infty} U^*(x)=1.
$$
Assume by contradiction that the above does not hold. Then there is $x_n\to -\infty$ such that
$$
0<\lim_{n\to\infty} u^*(x_n)<1.
$$
Without loss of generality, we may assume that
$$
(\widetilde U^*(x),\widetilde V^*(x))=\lim_{n\to\infty} (U^*(x+x_n), V^*(x+x_n))
$$
exists locally uniformly in $x\in\RR$. Then $\inf_{x\in\RR} \widetilde U^*(x)>0$ and
$(\widetilde U^*,\widetilde V^*)$ is a stationary solution of \eqref{main-moving-eq1}. By Proposition
\ref{constant-stability-prop}, $\widetilde U^*\equiv 1$, in particular,
$\widetilde U^*(0)=1$, which is a contradiction.
\end{proof}

\subsection{Existence of  traveling wave solutions with positivity sensitivity and proof of Theorem \ref{monotone-thm}(2)}

In this subsection, we prove Theorem \ref{monotone-thm}(2). Throughout this subsection, we assume that 
$0\le \chi<\min\{\frac{1}{2},\frac{2m+2\gamma}{m^2+m+2\gamma}\}$ and $\alpha=m+\gamma-1$.

\begin{proof}[Proof of Theorem \ref{monotone-thm}(2)]
First, for fixed   $c> 2$,  let  $\kappa=\frac{c-\sqrt{c^2-4}}{2}$. For fixed
$\kappa_1$ satisfying $\kappa<\kappa_1<   \min\{(1+\alpha)\kappa, m\kappa+1/2,  1\}$,   let $\tilde \kappa$ be such that
  $0<\kappa_1<\tilde \kappa \le \min\{(1+\alpha)\kappa, m\kappa+1/2, 1\}$. 
Let $M=M_{\chi}:=\big(\frac{1}{1-\chi}\big)^{1/\alpha}$ and  fix  $D\ge D_{M,\kappa,\tilde\kappa,\chi,m,\gamma}$, where $D$ is as Lemma \ref{prelim-sub-lm}(1).

Next, for given $u\in\mathcal{E}_{\kappa,M_{\chi}}$, let $V(x)=\Psi(x;u,1,1)$ and 
$u(t,x;U_{\kappa,M_{\chi}}^+,u)$ be the solution 
of \eqref{main-moving-eq2} with $u(0,x;U_{\kappa,M_\chi}^+,u)=U_{\chi,M_{\chi}}^+(x)$. By the arguments of Step 1 in the proof of Theorem \ref{monotone-thm}(1), we have
$$
U(x;u):=\lim_{t\to\infty} u(t,x;U_{\kappa,M_{\chi}}^+,u)
$$
exists, and $U(\cdot;u)\in\mathcal{E}_{\kappa,M_{\chi}}$.

Now, by the arguments of Step 2 in the proof of Theorem \ref{monotone-thm}(1),  the mapping $\mathcal{T}_{\kappa,M_{\chi}}:\mathcal{E}_{\kappa,M_{\chi}}\to \mathcal{E}_{\kappa,M_{\chi}}$  defined by 
$$
\mathcal{T}_{\kappa,M_{\chi}} u=U(\cdot;u)
$$
 has a fixed point $U^*\in\mathcal{E}_{\kappa,M_{\chi}}$.

Finally, by the   arguments of Step 3 in the proof of Theorem \ref{monotone-thm}(1),  and  Proposition
\ref{constant-stability-prop},  $(u(t,x),v(t,x))=(U^*(x-ct),V^*(x-ct))$ is a traveling wave solution of \eqref{main-eq} satisfying \eqref{wave-eq2}, \eqref{wave-eq3}, and \eqref{wave-eq4}. 
\end{proof}

\begin{remark}
\label{existence-rk}
(1) The traveling wave solution $(u(t,x),v(t,x))=U(x-ct),V(x-ct))$ obtained from the above construction satisfies
$$
\lim_{x\to \infty} e^{\eta x} \Big(\frac{U(x)}{e^{-\kappa x}}-1\Big)=0
$$
for any $0<\eta\le \min\{\alpha \kappa, (m-1)\kappa +1/2, 1-\kappa\}$. Note that, if  $(U_1,V_1)$ and $(U_2,V_2)$ are two such traveling wave solutions obtained from the above arguments, then
$$
\lim_{x\to \infty} e^{(\eta +\kappa)x} \big(U_1(x)-U_2(x)\big)=
\lim_{x\to \infty} e^{\eta x} \Big(\frac{U_1(x)}{e^{-\kappa x}}-\frac{U_2(x)}{e^{-\kappa x}}\Big)=0
$$
for  any $0<\eta< \min\{\alpha \kappa, (m-1)\kappa +1/2, 1-\kappa\}$.
This implies that
$$
\int_{\RR} e^{2(\eta+\kappa) x}|U_1(x)-U_2(x)|^2 dx<\infty
$$
for  any $0<\eta< \min\{\alpha \kappa, (m-1)\kappa +1/2, 1-\kappa\}$.

\smallskip

(2) In the case that $\frac{1}{2}<\frac{2m+2\gamma}{m^2+m+2\gamma}$,
by the arguments of Theorem \ref{monotone-thm}(2), when $\frac{1}{2}\le \chi<\min\{\frac{2m+2\gamma}{m^2+m+2\gamma},1\}$,  for any $c>2$, \eqref{main-eq} has a traveling wave solution
$(u(t,x),v(t,x))=(U(x-ct),V(x-ct))$ satisfying
$$
\lim_{x\to\infty} (U(x),V(x))=(0,0), \quad \liminf_{x\to -\infty} U(x)>0.
$$
\end{remark}

\section{Uniqueness and stability of traveling waves connecting constant solutions}
\label{uniqueness-stability-section}

In this section we study the uniqueness and stability of traveling wave solutions of \eqref{main-eq} and prove Theorems \ref{stability-thm} and \ref{uniqueness-thm}. To this end, we first establish some a priori  estimates for traveling wave solutions of \eqref{main-eq} connecting $(1,1)$ and $(0,0)$. 

\subsection{Some a priori estimates for traveling wave solutions}

In the first lemma, we provide  some a priori estimates  for traveling wave solutions of \eqref{main-eq} or  stationary solutions of
\eqref{main-moving-eq1} connecting $(1,1)$ and $(0,0)$.  Throughout this subsection, we assume 
$$M_\chi=\begin{cases} 1\quad  &\rm {if}\quad \chi\le 0\cr
\big(\frac{1}{1-\chi}\big)^{1/\alpha}\quad &{\rm  if}\quad  0<\chi<1/2.
\end{cases}
$$

\begin{lemma}
\label{estimate-lm1}
Suppose that $c>2$, and  $(u(x),v(x))$ is  s stationary  solution of \eqref{main-moving-eq1}  connecting $(1,1)$ and $(0,0)$  and $0<u(x)\le \min\{M_\chi, e^{-\kappa x}\}$, where $k=\frac{c-\sqrt{c^2-4}}{2}$.

\begin{itemize}
\item[(1)] There hold
\begin{equation}
\label{estimate-eq0}
|v(x)|\le  M_\chi^\gamma 
 \,\,\, {\rm and}\,\,\,
|v_x(x)|\le  M_\chi^\gamma  \quad \forall\, x\in\RR.
\end{equation}
\end{itemize}

If, in addition,   {$c>\gamma+\frac{1}{\gamma}$,}  which is equivalent to $\kappa<\frac{1}{\gamma}$, then
\begin{equation}
\label{estimate-eq1}
|v(x)|\le  \min\big\{M_\chi^\gamma , \frac{1}{1-\kappa^2 \gamma^2}e^{-\kappa\gamma  x}\big \}
 \,\,\, {\rm and}\,\,\,
|v_x(x)|\le  \min\{M_{\chi}^\gamma ,  \frac{1}{1-\kappa^2 \gamma^2}  e^{-\kappa\gamma  x}\}  \quad \forall\, x\in\RR.
\end{equation}

\begin{itemize}
\item[(2)] $\lim_{x\to\pm \infty} u_x(x)=0$.

\item[(3)] If {$c> m|\chi| M^{m+\gamma-1}_\chi$,}  then  
\begin{equation}
\label{estimate-eq2}
 -\frac{|\chi| M_{\chi}^{m+\gamma}+M_{\chi}}{c-m|\chi| M^{m+\gamma-1} _{\chi} } \,   \le\,  u_x(x)\,  \le  \,  \frac{|\chi| M_\chi^{m+\gamma} +M_\chi(M_\chi^\alpha -1)}{c- m |\chi| M^{m+\gamma-1}_\chi }  \quad \forall\, x\in\RR.
\end{equation}

\item[(4)]  If {$c>\max\big\{\gamma+\frac{1}{\gamma}, m|\chi| M^{m+\gamma-1}_\chi\big\}$,}  then 
\begin{align}
\label{estimate-eq3}
|u_x(x)|\le  \Big(1+\frac{2}{c}\Big) \Big(M_{1,\kappa,\chi,m,\alpha,\gamma}  e^{-\kappa x}+M_{2,\kappa,\chi,m,\alpha,\gamma}e^{-\gamma\kappa x}\Big)\quad \forall\, x\in\RR,
\end{align}
where
$$
M_{1,\kappa,\chi,m,\alpha,\gamma}:= 1+2 |\chi| M^{m+\gamma-1}_\chi+M_\chi^{\alpha}.
$$
and
$$
M_{2, \kappa,\chi,m,\alpha,\gamma}:=\frac{|\chi| m M_{\chi}^{m-1} (|\chi| M_{\chi}^{m+\gamma}+M_\chi^{\alpha+1}) }{(c- m|\chi| M^{m+\gamma-1}_{\chi})(1-\gamma^2\kappa^2)}.
$$
\end{itemize}
\end{lemma}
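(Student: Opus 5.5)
Part (1) follows from the representation formulas of Lemma \ref{prelim-v-lm1} with $\lambda=\mu=1$. Since $0<u\le M_\chi$, the formula \eqref{psi-eq2} gives $0<v=\Psi(\cdot;u^\gamma)\le M_\chi^\gamma$. Lemma \ref{prelim-v-lm2} then gives $|v_x|\le v\le M_\chi^\gamma$. When $c>\gamma+\frac1\gamma$ we have $\gamma\kappa<1$. Because $u^\gamma\le \min\{M_\chi^\gamma,e^{-\gamma\kappa x}\}$, we can repeat the computation of Lemma \ref{prelim-v-lm3} with $\kappa$ replaced by $\gamma\kappa$, which gives $v\le \frac{1}{1-\gamma^2\kappa^2}e^{-\gamma\kappa x}$. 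Applying Lemma \ref{prelim-v-lm2} once more transfers this bound to $v_x$, and \eqref{estimate-eq1} follows.

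For (2)--(4), I would integrate the stationary equation once. It reads $u_{xx}+cu_x-\chi(u^mv_x)_x+u(1-u^\alpha)=0$, and $u$ is bounded with bounded derivatives by parabolic and elliptic regularity. Multiplying by $e^{cx}$ turns the first two terms into $(e^{cx}u_x)_x$. I would treat the chemotaxis term by expanding $(u^mv_x)_x=mu^{m-1}u_xv_x+u^m(v-u^\gamma)$. The term $mu^{m-1}v_xu_x$ is then moved into the drift, so the equation reads $(e^{\int^x b}u_x)_x=-e^{\int^x b}F(x)$. Here the drift is $b=c-\chi m u^{m-1}v_x$, which satisfies $b\ge c-m|\chi|M_\chi^{m+\gamma-1}>0$ by part (1). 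The source is $F=-\chi u^m(v-u^\gamma)+u(1-u^\alpha)$. Integrating from $x$ to $+\infty$ (where the boundary term vanishes because $u_x$ is bounded and the weight grows) gives
$$u_x(x)=\int_x^\infty e^{-\int_x^y b}\,F(y)\,dy.$$
Using the pointwise bounds $-(|\chi|M_\chi^{m+\gamma}+M_\chi)\le F\le |\chi|M_\chi^{m+\gamma}+M_\chi(M_\chi^\alpha-1)$ together with $\int_x^\infty e^{-(c-m|\chi|M_\chi^{m+\gamma-1})(y-x)}dy=\frac{1}{c-m|\chi|M_\chi^{m+\gamma-1}}$ yields \eqref{estimate-eq2}. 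The upper bound needs a sign discussion of $u(1-u^\alpha)$: its positive part is at most $M_\chi$, so the upper constant may have to be written slightly differently. I would track the sign cases so that the stated constants come out.

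For (2), the same formula shows $u_x(x)\to0$ as $x\to+\infty$, since $F(y)\to0$ as $y\to\infty$ (because $u,v\to0$). As $x\to-\infty$, the convergence $u\to1$, $v\to1$ forces $F\to0$. To get $u_x\to0$ there, I would instead use the standard compactness argument: translates of $u$ converge in $C^1_{\rm loc}$ to the constant $1$.

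For (4), I would insert the decay estimates into the representation of $u_x$. The reaction part of $F$ is bounded by $(1+M_\chi^\alpha)e^{-\kappa y}$. The chemotaxis part satisfies $|\chi u^m(v-u^\gamma)|\le 2|\chi|M_\chi^{m+\gamma-1}e^{-\kappa y}$, using $u^m\le M_\chi^{m-1}u$ and $v,u^\gamma\le M_\chi^\gamma$. So $|F|\le M_1e^{-\kappa y}$. The kernel is weighted by $e^{-b(y-x)}$ and $b$ is nearly $c$, so the contribution is bounded by $M_1e^{-\kappa x}/(c-\kappa)$ or similar. The extra factor $(1+2/c)$ should absorb the difference between $b$ and $c$.

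The main obstacle is the drift perturbation $\chi mu^{m-1}v_xu_x$. Rather than absorbing it into $b$, I would keep $b=c$ exactly and treat this term as a source. By (3) and \eqref{estimate-eq1} it is bounded by
$$m|\chi|M_\chi^{m-1}\cdot\frac{|\chi|M_\chi^{m+\gamma}+M_\chi^{\alpha+1}}{c-m|\chi|M_\chi^{m+\gamma-1}}\cdot\frac{e^{-\gamma\kappa y}}{1-\gamma^2\kappa^2},$$
which is exactly $M_2e^{-\gamma\kappa y}$. Integrating against $e^{-c(y-x)}$ then gives factors $\frac{1}{c-\kappa}$ and $\frac{1}{c-\gamma\kappa}$. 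Both are at most $1+\frac2c$, using $\kappa,\gamma\kappa<1$ and $c>2$; I would check this elementary inequality at the end.
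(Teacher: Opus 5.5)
\textbf{There is a genuine error here: your integrating-factor formula has the wrong direction and sign.} This is exactly why the constants in \eqref{estimate-eq2} refused to come out.

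Write $B'=b$ with $b=c-\chi m u^{m-1}v_x$. The equation is $(e^{B}u_x)_x=-e^{B}F$. If you integrate over $(x,R)$ and let $R\to+\infty$, the boundary term $e^{B(R)}u_x(R)$ carries a \emph{growing} weight. Boundedness of $u_x$ therefore does not remove it, and the kernel you actually get is $e^{+\int_x^y b}$, not $e^{-\int_x^y b}$.

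Your formula $u_x(x)=\int_x^\infty e^{-\int_x^y b}F\,dy$ in fact solves $u_{xx}-bu_x=-F$. A quick test with $b\equiv c$, $F\equiv f_0$: the bounded solution has $u_x\equiv -f_0/c$, but your formula gives $+f_0/c$. For $\chi=0$ it even yields $u_x(x)=\int_x^\infty e^{-c(y-x)}u(1-u^\alpha)\,dy\ge 0$. That would make $u$ nondecreasing, which contradicts $u(-\infty)=1$, $u(+\infty)=0$.

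The correct representation comes from integrating over $(R,x)$ with $R\to-\infty$. There $e^{B(R)}\to0$ because $b\ge b_0:=c-m|\chi|M_\chi^{m+\gamma-1}>0$, so
\[
u_x(x)=-\int_{-\infty}^x e^{-\int_y^x b(s)\,ds}\,F(y)\,dy .
\]
With your version, the upper bound on $u_x$ is governed by $\sup F$, which contains $\sup u(1-u^\alpha)>0$. The two constants of \eqref{estimate-eq2} therefore come out interchanged, and no sign-case analysis repairs this. (For $\chi=0$ the statement asserts $u_x\le 0$.)

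With the corrected formula, $u_x\ge -\sup F^+/b_0$ and $u_x\le \sup F^-/b_0$. Combine this with $F\le |\chi|M_\chi^{m+\gamma}+M_\chi$ and $-F\le |\chi|M_\chi^{m+\gamma}+M_\chi(M_\chi^\alpha-1)$; the latter uses $u(u^\alpha-1)\le M_\chi(M_\chi^\alpha-1)$ for $0<u\le M_\chi$. This gives exactly the stated constants.

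\textbf{The same correction is needed in (2) and (4).}

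In (2) the only assumption is $c>2$, so you cannot use $b\ge b_0>0$. Instead take $b\equiv c$ and put the drift term into the source, as you already do in (4): set $G:=F-\chi m u^{m-1}v_xu_x$. Then $u_{xx}+cu_x=-G$ and
\[
u_x(x)=-\int_{-\infty}^x e^{-c(x-y)}G(y)\,dy .
\]
Since $u_x$ is bounded and $v_x\to0$ at $\pm\infty$ (by \eqref{psi-eq3}), $G\to0$ at both ends, and both limits in (2) follow without compactness.

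In (4), your factors $\frac{1}{c-\kappa}$ and $\frac{1}{c-\gamma\kappa}$ are precisely what $\int_{-\infty}^x e^{-c(x-y)}e^{-\kappa y}\,dy$ produces; $\int_x^\infty$ would not produce them. Both factors are $<1\le 1+\frac{2}{c}$. You were right to keep $b\equiv c$ there, since a variable drift would require $b_0>\kappa$, which is not assumed.

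\textbf{Comparison with the paper.} Once the direction is fixed, your route is a legitimate alternative.
\begin{itemize}
\item For (3), the paper evaluates the equation at a global extremum of $u_x$, where $u_{xx}=0$. This needs (2) to guarantee that the extremum is attained.
\item For (4), the paper rewrites the equation as $u_{xx}+cu_x-u+f=0$ and uses the two-sided Green's function with $\lambda^c_{1,2}$.
\end{itemize}
Your one-sided first-order representation handles (2)--(4) with a single formula and gives a slightly sharper constant in (4).
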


\begin{proof} 
(1) By the assumption, 
\begin{equation}
\label{proof-estimate-eq1}
0\le u(x)\le \min\{M_{\chi} ,e^{-\kappa x}\}\quad  \forall\, x\in\RR.
\end{equation}
This together with Lemmas \ref{prelim-v-lm2} and \ref{prelim-v-lm3}  implies that \eqref{estimate-eq0} and \eqref{estimate-eq1}  hold.

\smallskip

(2)  First, we 
 prove that $\lim_{x\to -\infty} u_x(x)=0$. Assume by contradiction that 
$\lim_{x\to\infty}u_x(x)\not =0$. Then there is $\epsilon_0>0$ and $x_n\to-\infty$ such that
$$
|u_x(x_n)|\ge \epsilon_0.
$$
Without loss of generality, we may assume that 
the limit
$$
u_\infty(x)=\lim_{n\to\infty} u(x+x_n),\quad v_\infty(x)=\lim_{n\to\infty} v(x+x_n)
$$
exist locally uniformly in $x\in\RR$. Then by regularity for parabolic and elliptic equations,
 $(u_\infty,v_\infty)$ is a stationary solution of \eqref{main-moving-eq1} and
$$
\inf_{x\in\RR} u_\infty (x)>0,\quad \inf_{x\in\RR}v_\infty(x)>0.
$$
We then must have $u_\infty(x)\equiv 1$, but
$|(u_\infty)_x(0)|\ge \epsilon_0$, which is a contradiction. Hence
$\lim_{x\to -\infty} u_x(x)=0$. 

\smallskip

Next, we prove $\lim_{x\to\infty}u_x(x)=0$. Assume by contradiction that $\lim_{x\to\infty} u_x(x_0)\not =0$.
Then by the uniform continuity of $u_x(x)$,  there are $\epsilon_0>0$, $\delta_0>0$, and $x_n\to\infty$ such that
$$
|u_x(x)|\ge \delta_0\quad \forall\, x\in [x_n-\delta_0,x_n+\delta_n],\,\, n=1,2, \cdots.
$$
Without loss of generality, we may assume that
$$
u_x(x)\ge \delta_0\quad \forall\, x\in [x_n-\delta_0,x_n+\delta_n],\,\, n=1,2, \cdots.
$$
Note that
$$
\lim_{n\to\infty} u(x+x_n)=0\quad \forall\, x\in [x_n,x_n+\delta_0],\,\, n=1,2,\cdots.
$$
On the other hand, 
$$
u(x_n+\delta_0)=u(x_n)+\int_{x_n}^{n_n+\delta_0} u_x(y)dy\ge u(x_n)+\delta_0 \epsilon_0\not \to 0
$$
as $n\to\infty$, which is a contradiction.
Hence $\lim_{x\to\infty}u_x(x)=0$.

\smallskip

(3) First, note that
$$
u_{xx}+cu_x-\chi m u^{m-1} u_x v_x-\chi u^m (v-u^\gamma)+u(1-u^\alpha)=0.
$$
By (2), 
$$
\lim_{x\to\pm\infty}u_x(x)=0.
$$
This together with $\lim_{x\to -\infty} u(x)=1$ and $\lim_{x\to\infty}u(x)=0$ implies that 
$u_x(x)<0$ for some $x\in\RR$. Hence 
 there is $x_{\min}\in\RR$ such that
$$
0>u_x(x_{\min})=\min_{x\in\RR} u_x(x),\quad u_{xx}(x_{\min})=0.
$$
It then follows from   \eqref{estimate-eq0}
that
\begin{align*}
\big(c -\chi m u^{m-1}(x_{\min}) v_x(x_{\min}\big)u_x(x_{\min}) &\le \Big (c- {m |\chi| M_\chi^{m+\gamma-1}} \Big) u_x(x_{\min}).
\end{align*}
This together with \eqref{estimate-eq0}  implies that
\begin{align*}
0&=u_{xx}(x_{\min})+cu_x(x_{\min})-\chi m u^{m-1}(x_{\min}) u_x (x_{\min})v_x(x_{\min})\\
&\quad -\chi u^m(x_{\min}) (v(x_{\min})-u^\gamma(x_{\min})) +u(x_{\min})(1-u^\alpha(x_{\min}))\\
&\le \big(c- m|\chi| M^{m+\gamma -1}_{\chi}\big)u_x(x_{\min})+ |\chi| M_{\chi}^{m+\gamma}+M_{\chi}.
\end{align*}
It then follows that
$$
u_x(x)\ge u_x(x_{\min})\ge -\frac{|\chi| M_{\chi}^{m+\gamma}+M_{\chi}}{c-m|\chi| M^{m+\gamma-1} _{\chi} }   \quad \forall\, x\in\RR.
$$

Next, note that if $u_x(x)\le 0$ for all $x\in\RR$, then \eqref{estimate-eq2} is proved. If $u_x(x)>0$ for some $x\in\RR$, then there is $x_{\max}\in\RR$ such that
$$
0<u_{x}(x_{\max})=\max_{x\in\RR} u_x(x),\quad u_{xx}(x_{\max})=0.
$$
By \eqref{estimate-eq0} again,
$$
\big(c-\chi m u^{m-1}(x_{\max}) v_x(x_{\max})\big)u_x(x_{\max})\ge \Big (c- m |\chi| M^{m+\gamma-1}_\chi \Big)u_x(x_{\max}).
$$
This together with \eqref{estimate-eq0}  implies that 
\begin{align*}
0&=u_{xx}(x_{\max})+cu_x(x_{\max})-\chi m u^{m-1}(x_{\max}) u_x (x_{\max})v_x(x_{\max})\\
&\quad -\chi u^m(x_{\max}) (v(x_{\min})-u^\gamma(x_{\max})) +u(x_{\max})(1-u^\alpha(x_{\max}))\\
&\ge (c- m |\chi| M^{m+\gamma-1}_\chi \Big) u_{x}(x_{\max})-|\chi| M_\chi^{m+\gamma}-M_\chi \big(M_{\chi}^{\alpha}-1\big).
\end{align*}
Therefore
$$
u_x(x_{\max})\le \frac{|\chi| M_\chi^{m+\gamma} +M_\chi(M_\chi^\alpha -1)}{c- m |\chi| M^{m+\gamma-1}_\chi }  .
$$
Hence \eqref{estimate-eq2} holds.

\smallskip

(4) 
First, note that
$$
u_{xx}+cu_x-u+ f(x)=0,
$$
where
$$
f(x)= u(x)-\chi m u^{m-1}(x) u_x(x) v_x(x)-\chi u^m (x)(v(x)-u^\gamma(x))+u(x)(1-u^\alpha(x)).
$$
By Lemma \cite[Lemma 2.1]{SaSh2020-1}, it holds that
\begin{align}
\label{proof-estimate-eq2}
u(x)&=\frac{1}{\sqrt{4+c^2}}\int_{\RR} e^{\frac{-\sqrt{4+c^2} |x-y| -c(x-y)}{2}} f(y)dy\nonumber\\
&=\frac{1}{\sqrt{4+c^2}}\left[e^{-\lambda_1^c x}\int_{-\infty}^x e^{\lambda_1^c y} f(y)dy
+e^{\lambda_2^c x}\int_x^\infty e^{-\lambda_2 ^c y} f(y) dy\right],
\end{align}
and
\begin{align}
\label{proof-estimate-eq3}
u_x(x)=\frac{1}{\sqrt{4+c^2}}\left[-\lambda_1^c e^{-\lambda_1^c x}\int_{-\infty}^x e^{\lambda_1^c y} f(y)dy
+\lambda_2^c e^{\lambda_2^c x}\int_x^\infty e^{-\lambda_2 ^c y} f(y) dy\right],
\end{align}
where
$$
\lambda_1^c=\frac{\sqrt{4+c^2}+c}{2},\quad \lambda_2^2=\frac{\sqrt{4+c^2}-c}{2}.
$$

By \eqref{estimate-eq0}, \eqref{estimate-eq0},  and \eqref{estimate-eq2}, it holds that 
\begin{align*}
|f(x)|&\le u(x)+|\chi m u^{m-1} u_x v_x|+|\chi u^m (v-u^\gamma)|+|u(1-u^\alpha)|\\
&\le  u(x)+\frac{|\chi| m M_\chi^{m-1} (|\chi| M_{\chi}^{m+\gamma}+M_{\chi}^{\alpha+1})}{c- m|\chi| M^{m+\gamma-1}_\chi} |v_x| + |\chi| u^m (u^\gamma +v) +| u(1-u^\alpha)| \\
&\le   e^{-\kappa x}+\frac{|\chi| m M_{\chi}^{m-1} (|\chi| M_{\chi}^{m+\gamma}+M_\chi^{\alpha+1}) }{(c- m|\chi| M^{m+\gamma-1}_{\chi})(1-\gamma^2\kappa^2)} e^{-\gamma \kappa  x} +\Big( 2 |\chi| M^{m+\gamma-1}_\chi+M_\chi^{\alpha}\Big) e^{-\kappa   x}\\
&=M_{1,\kappa,\chi,m,\alpha,\gamma} e^{-\kappa x} +M_{2,\kappa,\chi,m,\alpha,\gamma} e^{-\gamma\kappa x}.
\end{align*}
It then follows that
\begin{align*}
|u_x(x)|&\le M_{1,\kappa,\chi,m,\alpha,\gamma}  \frac{1}{\sqrt{4+c^2}}\left[ \lambda_1^c e^{-\lambda_1^c x}\int_{-\infty}^x e^{\lambda_1^c y} e^{-\kappa y} dy
+\lambda_2^c e^{\lambda_2^c x}\int_x^\infty e^{-\lambda_2 ^c y} e^{-\kappa y} dy\right]\\
&\quad + M_{2,\kappa,\chi,m,\alpha,\gamma}  \frac{1}{\sqrt{4+c^2}}\left[ \lambda_1^c e^{-\lambda_1^c x}\int_{-\infty}^x e^{\lambda_1^c y} e^{-\gamma \kappa y} dy
+\lambda_2^c e^{\lambda_2^c x}\int_x^\infty e^{-\lambda_2 ^c y} e^{-\gamma \kappa y} dy\right]\\
&=
 M_{1,\kappa,\chi,m,\alpha,\gamma}  \Big(\frac{\lambda_1^c}{\sqrt{4+c^2}}\frac{1}{\lambda_1^c -\kappa}+
\frac{\lambda_2^c}{\sqrt{4+c^2}}\frac{1}{\lambda_2^c+\kappa}\Big) e^{-\kappa x}
\\
&\quad + M_{2,\kappa,\chi,m,\alpha,\gamma}  \Big(\frac{\lambda_1^c}{\sqrt{4+c^2}}\frac{1}{\lambda_1^c -\gamma \kappa}+
\frac{\lambda_2^c}{\sqrt{4+c^2}}\frac{1}{\lambda_2^c+\gamma \kappa}\Big) e^{-\gamma \kappa x}
\end{align*}
 Note that $0<\kappa<1$, $0<\gamma\kappa<1$, and $c>2$. Hence
\begin{align*}
\frac{\lambda_1^c}{\sqrt{4+c^2}}\frac{1}{\lambda_1^c -\kappa}\le  
\frac{\sqrt{4+c^2}+c}{\sqrt{4+c^2}}\frac{1}{\sqrt{4+c^2}}\le 1+ \frac{1}{c}\, ,
\end{align*}
and
\begin{align*}
\frac{\lambda_2^c}{\sqrt{4+c^2}}\frac{1}{\lambda_2^c+\kappa}&\le  \frac{1}{\sqrt{4+c^2}}\le \frac{1}{c}.
\end{align*} 
Similarly, 
\begin{align*}
\frac{\lambda_1^c}{\sqrt{4+c^2}}\frac{1}{\lambda_1^c -\gamma \kappa}\le    
\frac{\sqrt{4+c^2}+c}{\sqrt{4+c^2}}\frac{1}{\sqrt{4+c^2}}\le  1+\frac{1}{c}\, ,
\end{align*}
and
\begin{align*}
\frac{\lambda_2^c}{\sqrt{4+c^2}}\frac{1}{\lambda_2^c+\gamma \kappa}&\le \frac{1}{\sqrt{4+c^2}}\le \frac{1}{c}.
\end{align*} 
Hence
$$
|u_x(x)|\le  \Big(1+\frac{2}{c}\Big) \Big(M_{1,\kappa,\chi,m,\alpha,\gamma}  e^{-\kappa x}+M_{2,\kappa,\chi,m,\alpha,\gamma}e^{-\gamma\kappa x}\Big).
$$
This proves \eqref{estimate-eq3}.
\end{proof}

\begin{remark}
\label{estimate-rk1}
 If {$c>\max\big\{\gamma+|\chi|^\sigma+\frac{1}{\gamma+|\chi|^\sigma }, m|\chi| M^{m+\gamma-1}_\chi +|\chi|^\sigma\}$ for some $\sigma>0$,} then  by \eqref{estimate-eq2}, there holds
\begin{equation}
\label{estimate-eq2-1}
 |  u_x(x)|  \le  \,  \frac{M'_{\chi,m,\alpha,\gamma}}{|\chi|^\sigma }  \quad \forall\, x\in\RR,
\end{equation}
where 
$$
M'_{\chi,m,\alpha,\gamma}= |\chi| M_\chi^{m+\gamma} +M_\chi^{1+\alpha}.
$$
By \eqref{estimate-eq3}, there holds
\begin{align}
\label{estimate-eq3-1}
|u_x(x)|\le   \frac{M''_{\chi,m,\alpha,\gamma,\sigma}}{|\chi|^{2\sigma}} e^{-\kappa x} \quad \forall\, x\ge 0,
\end{align}
where
$$
M''_{\chi,m,\alpha,\gamma,\sigma}=2\Big(\big(1+2 |\chi| M^{m+\gamma-1}_\chi+M_\chi^{\alpha}\big)|\chi|^{2\sigma}+
+|\chi| m M_{\chi}^{m-1} (|\chi| M_{\chi}^{m+\gamma}+M_\chi^{\alpha+1}) (\gamma+|\chi|^\sigma)\Big). 
$$
Note that both $M'_{\kappa,m,\alpha,\gamma}$ and $M''_{\kappa,m,\alpha,\gamma,\sigma}$ stay bounded as $\chi\to 0$.
\end{remark}

In the second lemma, we provide estimates for $\frac{u_x}{u}$ for stationary solutions of \eqref{main-moving-eq1}.

\begin{lemma}
\label{estimate-lm2}Assume that $c>\max\big\{\gamma+\frac{1}{\gamma}, m|\chi| M^{m+\gamma-1}_\chi\big\}$.   Suppose that  $(u(x),v(x))$ is  s stationary  solution of \eqref{main-moving-eq1}  connecting $(1,1)$ and $(0,0)$  and $0<u(x)\le \min\{M_\chi, e^{-\kappa x}\}$, where $\kappa=\frac{c-\sqrt{c^2-4}}{2}$.
Then
$$
\big|\frac{u_x(x)}{u(x)}\big|\le  \widetilde M_{c,\chi,m,\alpha,\gamma},
$$
where
$$
\widetilde M_{c,\chi,m,\alpha,\gamma}:= \frac{1}{2}\left( c+|\chi| m M_{\chi}^{m+\gamma-1}+\sqrt{ (c+|\chi| m M_{\chi}^{m+\gamma-1})^2 +4|\chi| M_{\chi}^{m+\gamma-1}+4M_{\chi}^\alpha}\right).
$$
\end{lemma}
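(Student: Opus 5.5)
The plan is to work with the logarithmic derivative $w(x):=\frac{u_x(x)}{u(x)}$, which is well defined and smooth on all of $\RR$ because $u>0$, and to derive a Riccati equation for it. From $u_{xx}=u(w_x+w^2)$ and the stationary equation
$$
u_{xx}+cu_x-\chi m u^{m-1}u_xv_x-\chi u^m(v-u^\gamma)+u(1-u^\alpha)=0,
$$
dividing by $u$ gives
$$
w_x=-w^2-b(x)\,w+g(x),\qquad b(x):=c-\chi m u^{m-1}v_x,\qquad g(x):=\chi u^{m-1}(v-u^\gamma)-1+u^\alpha .
$$
By $0<u\le M_\chi$ and Lemma \ref{estimate-lm1}(1), that is, $|v|,|v_x|\le M_\chi^\gamma$, we get
$$
|b(x)-c|\le m|\chi|M_\chi^{m+\gamma-1}.
$$
Hence $0<c-m|\chi|M_\chi^{m+\gamma-1}\le b(x)\le B$, using the hypothesis $c>m|\chi|M_\chi^{m+\gamma-1}$, where $B:=c+m|\chi|M_\chi^{m+\gamma-1}$. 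For $g$, one sign-dependent term can be dropped, and in all cases
$$
|g(x)|\le G:=|\chi|M_\chi^{m+\gamma-1}+M_\chi^\alpha .
$$

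Set $\widetilde M=\frac12\big(B+\sqrt{B^2+4G}\big)$, which is exactly $\widetilde M_{c,\chi,m,\alpha,\gamma}$. It is the positive root of $s^2-Bs-G=0$, so $\widetilde M^2\ge G$ and $\widetilde M\ge B$. I will show that $|w|\le\widetilde M$ everywhere by an invariance/blow-up argument rather than by locating extrema. This avoids having to know the limit of $w$ as $x\to+\infty$, which is not directly available: at $-\infty$ we have $w\to0$ by Lemma \ref{estimate-lm1}(2) and $u\to1$, but at $+\infty$ both $u$ and $u_x$ tend to $0$.

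First suppose $w(x_0)>\widetilde M$ for some $x_0$. Whenever $w>\widetilde M$ we have $-b w<0$ and $g\le G$, so
$$
w_x\le -(w^2-G)<0 .
$$
Going backward from $x_0$, $w$ therefore increases and stays above $\widetilde M$. Since $w^2-G\ge \delta w^2$ for some $\delta>0$ when $w\ge w(x_0)>\widetilde M\ge\sqrt G$, comparison with $\dot y=-\delta y^2$ shows that $w\to+\infty$ at some finite $x_1<x_0$. This contradicts the smoothness of $w$ on $\RR$.

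Next suppose $w(x_0)<-\widetilde M$. Whenever $w<-\widetilde M$,
$$
w_x=-w^2+b|w|+g\le -|w|^2+B|w|+G<0 ,
$$
because $|w|$ exceeds the positive root of $s^2-Bs-G$. Going forward from $x_0$, $w$ then decreases, and again $-|w|^2+B|w|+G\le-\delta|w|^2$, so $w\to-\infty$ in finite forward $x$. This is again a contradiction. Hence $-\widetilde M\le w\le\widetilde M$ on $\RR$, which is the claim.

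The main point to check carefully is the uniform bounds on $b$ and $g$ in both sign cases of $\chi$, in particular that $b$ stays positive. Without that, the term $-bw$ would not have a favorable sign in the first case. The positivity of $b$ is exactly where the hypothesis $c>m|\chi|M_\chi^{m+\gamma-1}$ enters, while $c>\gamma+\frac1\gamma$ is used only through Lemma \ref{estimate-lm1}.
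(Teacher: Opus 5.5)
Your proof is correct, but it takes a different route from the paper. The paper also passes to $w=u_x/u$ and the same Riccati identity, but it argues via extremal points. It first asserts $\|w\|_\infty<\infty$, using $w\to0$ at $-\infty$ and a bound at $+\infty$ taken from Lemma \ref{estimate-lm1}(4) together with the super/sub-solution lemmas. It then splits on whether $|w|$ attains its supremum. If it does, $w_x=0$ at that point, so $w^2+aw+b=0$ with $|a|\le c+m|\chi|M_\chi^{m+\gamma-1}$ and $|b|\le|\chi|M_\chi^{m+\gamma-1}+M_\chi^\alpha$, and the root formula gives $|w|\le\widetilde M_{c,\chi,m,\alpha,\gamma}$. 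If it does not, the supremum is approached along $x_n\to\infty$. A translation-limit argument, using bounds on $w_x,w_{xx}$ and $u\to0$, then gives $(w^*)^2+cw^*+1=0$, hence $\|w\|_\infty\le\frac{c+\sqrt{c^2-4}}{2}\le\widetilde M_{c,\chi,m,\alpha,\gamma}$.

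Your invariance and finite-blow-up argument replaces both branches. It needs neither a priori boundedness of $w$ nor any asymptotic information at $+\infty$. This is a genuine advantage: bounding $u_x/u$ at $+\infty$ from an upper bound on $|u_x|$ requires a lower bound such as $u\ge e^{-\kappa x}-De^{-\tilde\kappa x}$, which is not among the lemma's hypotheses. In exchange, the paper's route gives sharper information in the non-attained case, where the limiting value solves the characteristic equation $\lambda^2+c\lambda+1=0$.

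One small correction to your closing remark: positivity of $b$ is not actually needed. The bound $|b|\le B$ holds by the triangle inequality whatever the sign of $b$. So the first case goes through exactly like the second, since $w_x\le-w^2+Bw+G<0$ for $w>\widetilde M$, and backward blow-up follows as before. Positivity of $b$ (that is, the hypothesis $c>m|\chi|M_\chi^{m+\gamma-1}$) only buys you the sharper one-sided bound $w\le\sqrt G$. Similarly, you only use the bounds $|v|,|v_x|\le M_\chi^\gamma$ from Lemma \ref{estimate-lm1}(1), and these do not require $c>\gamma+\frac1\gamma$.
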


\begin{proof}
First, let $w(x)=\frac{u_x(x)}{u(x)}$.
Note that
$$
u_{xx}+c u_x(x)-\chi m u^{m-1} u_x(x) v_x(x)-\chi u^m(x) (v(x)-u^\gamma (x))+u(x) (1-u^\alpha(x))=0.
$$
Hence
\begin{equation}
\label{proof-estimate-3-eq1}
\frac{u_{xx}}{u}+c w -\chi m u^{m-1} v_x w -\chi u^{m-1} (v-u^\gamma)+(1-u^\alpha(x))=0.
\end{equation}
By Lemmas \ref{prelim-super-lm} and \ref{prelim-sub-lm}, and   Lemma \ref{estimate-lm1}(2) and (4), 
$$
\lim_{x\to -\infty} w(x)=0,\quad \limsup_{x\to\infty}|w(x)|<\infty.
$$
Hence 
$$
\|w\|_\infty<\infty.
$$
Then by \eqref{proof-estimate-3-eq1},
$$
\|\frac{u_{xx}}{u}\|_\infty<\infty.
$$

Next, if there is $x_0\in\RR$ such that
$$
|w(x_0)|=\|w\|_\infty,
$$
then
$$
w_x(x_0)=\frac{u_{xx}(x_0)}{u(x_0)}-w^2 (x_0)=0.
$$
This together with \eqref{proof-estimate-3-eq1} implies that
$$
w(x_0)^2+\Big(c-\chi m u^{m-1}(x_0)v_x(x_0)\big) w(x_0) -\chi u^{m-1}(x_0) (v(x_0)-u^\gamma(x_0))+(1-u^\alpha(x)(x_0))=0
$$
Let 
$$a(x_0)=c-\chi m u^{m-1}(x_0)v_x(x_0),
$$
and
$$
b(x_0)=-\chi u^{m-1}(x_0) (v(x_0)-u^\gamma(x_0))+(1-u^\alpha(x)(x_0)).
$$
Then 
$$
w(x_0)= \frac{-a(x_0)\pm \sqrt{(a(x_0))^2- 4 b(x_0)}}{2}.
$$
Hence
\begin{align*}
|w(x_0)|&\le\frac{|a(x_0)| +\sqrt{|a(x_0)|^2+4|b(x_0)|}}{2}\\
&\le \frac{1}{2}\left( c+|\chi| m M_{\chi}^{m+\gamma-1}+\sqrt{ (c+|\chi| m M_{\chi}^{m+\gamma-1})^2 +4|\chi| M_{\chi}^{m+\gamma-1}+4M_{\chi}^\alpha}\right)\\
&=\widetilde M_{c,\chi,m,\alpha,\gamma}.
\end{align*}

If $w(x_0)\not = \|w\|_\infty$ for any $x_0\in\RR$, then there is $x_n\to\infty$ such that
$$
\lim_{n\to\infty} |w(x_n)|=\|w\|_\infty.
$$
Note that
$$
w_x=\frac{u_{xx}}{u}-w^2.
$$
Hence $\|w_x\|_\infty<\infty$. This together with \eqref{proof-estimate-3-eq1} implies that
$\|\big(\frac{u_{xx}}{u}\big)_x\|_\infty<\infty$ and then
$$
\|w_{xx}\|_\infty=\|\big(\frac{u_{xx}}{u}\big)_x-2w w_x\|_\infty<\infty.
$$

  Without loss of generality, we may assume that
$$
w^*(x):=\lim_{x\to\infty} w(x+x_n) \quad {\rm and} \quad w^*_x(x)=\lim_{n\to\infty} w_x(x+x_n)
$$
exist locally uniformly in $x\in\RR$.  Clearly,
$$
|w^*(0)|=\|w^*\|_\infty=\|w\|_\infty,
$$
and 
$$
\lim_{n\to\infty} \frac{u_{xx}(x_n)}{u(x_n)}=\lim_{n\to\infty} \big(w^2(x_n)+w_x(x_n)\big)=(w^*(0))^2+w_x^*(0)=(w^*(0))^2.
$$
Then  by \eqref{proof-estimate-3-eq1}, it holds that
$$
(w^*(0))^2+cw^*(0) +1=0.
$$
This implies that
$$
\|w\|_\infty=|w^*(0)|\le \frac{c+\sqrt{c^2-4}}{2}.
$$
Note that
$$
\frac{c+\sqrt{c^2-4}}{2}\le \widetilde M_{c,\chi,m,\alpha,\gamma}.
$$
The lemma is thus proved.
\end{proof}

\begin{remark}
\label{estimate-rk2}
 Assume that
 $c>\max\big\{\gamma+|\chi|^\sigma +\frac{1}{\gamma+|\chi|^\sigma }, m|\chi| M^{m+\gamma-1}_\chi+|\chi|^\sigma \big\}$, 
and   $(u(x),v(x))$ is  s stationary  solution of \eqref{main-moving-eq1}  connecting $(1,1)$ and $(0,0)$  and $0<u(x)\le \min\{M_\chi, e^{-\kappa x}\}$, where $\kappa=\frac{c-\sqrt{c^2-4}}{2}$. Then
\begin{equation}
\label{ratio-eq}
\big|\frac{u_x(x)}{u(x)}\big|\le  \frac{M'''_{\chi,m,\alpha,\gamma,\sigma}}{|\chi|^{2\sigma}},
\end{equation}
where
\begin{align*}
&M'''_{\chi,m,\alpha,\gamma,\sigma}\\
&:= \begin{cases}
\frac{|\chi|^{2\sigma}}{2}\left( 2.5+|\chi| m M_{\chi}^{m+\gamma-1}+\sqrt{ (2.5+|\chi| m M_{\chi}^{m+\gamma-1})^2 +4|\chi| M_{\chi}^{m+\gamma-1}+4M_{\chi}^\alpha}\right)\,\,\, & {\rm if}\,\, c\le 2.5\cr\cr
\max\Big\{  \frac{8 (1+|\chi|+2m|\chi|)(\gamma+|\chi|^\sigma) } {1+\gamma} \, M'_{\chi, m,\alpha,\gamma},\,  2M''_{\chi,m,\alpha,\gamma,\sigma}\Big\}\quad &{\rm if}\,\, c>2.5.
\end{cases}
\end{align*}
In fact,  if  $c\le 2.5$, \eqref{ratio-eq}   follows from 
Lemma \ref{estimate-lm2}. If $c>2.5$, then  $ \kappa<1/2$, and \eqref{ratio-eq}  follows from  Remark \ref{prelim-sub-rk1},   \eqref{estimate-eq2-1}, and \eqref{estimate-eq3-1}.
Note that $M'''_{\kappa,m,\alpha,\gamma,\sigma}$ stays bounded as $\chi\to 0$.
\end{remark}

\begin{lemma}
\label{estimate-lm3}
Suppose that $u_i\in C_{\rm unif}^b(\RR)$, $0\le u_i\le M$ ($i=1,2$) for some $M\ge 1$,  and $\int_{\RR} e^{2\eta x}
|u_2(x)-u_1(x)|^2<\infty$
for some $0<\eta<1$.
Let $v(x)=\Psi(x;u_2^\gamma-u_1^\gamma ,1,1)$. Let $U(x)=e^{\eta x} (u_2(x)-u_1(x))$ and $V(x)=e^{\eta x} v(x)$. If $\gamma \ge 1$, 
then
\begin{equation}
\label{estimate-eq4}
\int_{\RR }V^2(x)  dx\le \frac{\gamma^2 M^{2(\gamma-1)} }{(1-\eta)^2}\int_{\RR} U^2,
\end{equation}
and
\begin{equation}
\label{estimate-eq5}
\int_{\RR} V_x^2(x)dx\le \frac{\gamma^2 M^{2(\gamma-1)}}{1-\eta^2}\int_{\RR} U^2,
\end{equation}
where $U(x)=e^{\eta x}(u_2(x)-u_1(x))$ and $V(x)=e^{\eta x}(v_2(x)-v_1(x))$.
\end{lemma}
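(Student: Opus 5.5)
The plan is to work directly with the explicit kernel of Lemma \ref{prelim-v-lm1} and then use an energy identity for the weighted function. Set $f=u_2^\gamma-u_1^\gamma$. Since $\gamma\ge 1$ and $0\le u_i\le M$, the mean value theorem gives
$$|f|\le \gamma M^{\gamma-1}|u_2-u_1|.$$
Hence $F(x):=e^{\eta x}f(x)$ satisfies $|F|\le \gamma M^{\gamma-1}|U|$, so $F\in L^2(\RR)$ and
$$\|F\|_{L^2}^2\le \gamma^2M^{2(\gamma-1)}\int_\RR U^2.$$
(Here $V=e^{\eta x}v$ with $v=\Psi(\cdot;u_2^\gamma-u_1^\gamma,1,1)$; this is what the statement calls $e^{\eta x}(v_2-v_1)$, by linearity of $\Psi$.)

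\emph{Step 1: bound on $\int V^2$.} By \eqref{psi-eq2} with $\lambda=\mu=1$,
$$V(x)=\frac12\int_\RR e^{-|x-y|+\eta(x-y)}F(y)\,dy=(k*F)(x),\qquad k(z)=\tfrac12 e^{-|z|+\eta z}.$$
Because $0<\eta<1$, the kernel is integrable:
$$\|k\|_{L^1}=\frac12\Big(\frac1{1-\eta}+\frac1{1+\eta}\Big)=\frac1{1-\eta^2}\le\frac1{1-\eta}.$$
Young's convolution inequality ($L^1*L^2\subset L^2$) then gives $\|V\|_{L^2}\le \|F\|_{L^2}/(1-\eta)$, which is \eqref{estimate-eq4}.

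\emph{Step 2: bound on $\int V_x^2$.} The same reasoning applied to \eqref{psi-eq3} shows that $e^{\eta x}v_x$ is a convolution of $F$ with an $L^1$ kernel. Therefore $V_x=\eta V+e^{\eta x}v_x\in L^2$. Moreover, $V$ is $C^1$ with $V,V_x\in L^2$, so $V(x)\to0$ as $|x|\to\infty$. From $v_{xx}-v+f=0$, the function $V$ satisfies
$$V_{xx}-2\eta V_x+(\eta^2-1)V+F=0.$$
Multiply this by $V$ and integrate over $\RR$. The term $\int V_xV=\tfrac12[V^2]_{-\infty}^{\infty}$ vanishes, and $\int V_{xx}V=-\int V_x^2$. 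This yields the identity
$$\int V_x^2+(1-\eta^2)\int V^2=\int FV.$$
By Young's inequality,
$$\int FV\le (1-\eta^2)\int V^2+\frac{1}{4(1-\eta^2)}\int F^2 .$$
Hence $\int V_x^2\le \frac{\gamma^2M^{2(\gamma-1)}}{4(1-\eta^2)}\int U^2$, which is stronger than \eqref{estimate-eq5}. The same identity also gives $(1-\eta^2)\|V\|^2\le\|F\|\|V\|$, which recovers Step 1 independently.

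\emph{Main obstacle.} The only delicate point is justifying the integration by parts, namely the vanishing of the boundary terms $[V_xV]$ and $[V^2]$ at $\pm\infty$. I would handle this by multiplying by a cutoff $\zeta(x/R)$ and letting $R\to\infty$. The error terms are of the form $R^{-1}\int|V_x||V|$ and $R^{-2}\int V^2$, and these tend to zero because $V,V_x\in L^2$ (from Steps 1--2). Alternatively, one can first prove the estimates for compactly supported $u_2-u_1$ and then pass to the limit using the linear bound from Step 1.
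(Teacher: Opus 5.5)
Your proof is correct and follows essentially the paper's route: the $\int V_x^2$ bound is obtained exactly as in the paper, by multiplying $V_{xx}-2\eta V_x+(\eta^2-1)V+e^{\eta x}(u_2^\gamma-u_1^\gamma)=0$ by $V$ and applying Young's inequality (you additionally justify that the boundary terms vanish, and you keep the extra factor $\frac14$). For $\int V^2$, the paper majorizes $|V|$ pointwise by $\gamma M^{\gamma-1}$ times the convolution of $|U|$ with the symmetric kernel $\frac12 e^{-(1-\eta)|x-y|}$, viewed as the solution of an auxiliary elliptic equation; you instead apply Young's convolution inequality directly to the exact kernel $\frac12 e^{-|z|+\eta z}$, which is the same idea and gives the slightly better constant $1/(1-\eta^2)$.
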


\begin{proof}
First,  we prove \eqref{estimate-eq4}.
Observe that  
\begin{align*}
|u_2^{\gamma} (x)-u_1^\gamma(x)|& =|{\gamma}\int_{u_1(x)}^{u_2(x)} w^{\gamma-1} dw|\\
&=\gamma |(u_2(x)-u_1(x))|\int_0^1 \big(\tau u_2(x)+(1-\tau)u_1(x)\big)^{\gamma -1}d\tau\\
&\le
\gamma M^{\gamma-1}  |u_2(x)-u_1(x)| \quad {\rm (since}\,\, \gamma\ge 1, \,\, 0\le u_1(x)\le M,\,\, 0\le u_2(x)\le M.
\end{align*} 
Then by
Lemma \ref{prelim-v-lm1}, we have
\begin{align*}
|V(x)|&=|\frac{1}{2} e^{\eta x} \int_{\RR} e^{-|x-y|}(u_2^\gamma (y) -u_1^\gamma(y))dy|\\
&\le | \frac{1}{2}\int_{\RR} e^{-|x-y|}e^{\eta (x-y)} e^{\eta y} (u_2^\gamma(y)-u_2^\gamma(y))dy|\\
&\le M^{\gamma-1}  \frac{\gamma}{2} \int_{\RR} e^{-|x-y|}e^{\eta(x-y)} e^{\eta y} |u_2(y)-u_1(y)| dy\\
&=M^{\gamma-1} \frac{\gamma}{2}\int_{-\infty}^ x e^{-(x-y)} e^{\eta(x-y)} |U(y)|dy
+M^{\gamma-1} \frac{\gamma}{2}\int_x^\infty e^{-(y-x)} e^{\eta(x-y)} |U(y)|dy\\
&\le M^{\gamma-1}\frac{\gamma}{2}\int_{\RR} e^{-(1-\eta)|x-y|} |U(y)|dy.
\end{align*}
Let
$$
\overline V(x)=\frac{\gamma}{2}\int_{\RR} e^{-(1-\eta)|x-y|} e^{\eta y}|U(y)|dy.
$$
Then by Lemma \ref{prelim-v-lm1}, $\overline V$ is the solution of
$$
\overline V_{xx}-(1-\eta)^2\overline V+\gamma(1-\eta)|U|=0.
$$ 
This implies that
$$
\int_{\RR} V^2\le M^{2(\gamma-1)} \int_{\RR} \overline V^2 \le \frac{\gamma^2 M^{2(\gamma-1)}}{(1-\eta)^2}\int_{\RR}|U|^2.
$$
Therefore, \eqref{estimate-eq4} holds.

Next,  we prove \eqref{estimate-eq5}. Observe that $V$ satisfies
$$
V_{xx}-2\eta V_x+(\eta^2 -1)V +e^{\eta x}(u_2^\gamma-u_1^\gamma)=0.
$$
This implies that
\begin{align*}
\int_{\RR} V_x^2&=(\eta^2 -1)\int_{\RR}^2 V^2+\int_{\RR} e^{\eta x}(u_2^\gamma(x)-u_1^\gamma(x)) V\\
&\le -(1-\eta^2)\int_{\RR}V^2+\gamma M^{\gamma-1}\int_{\RR} |U(x)|\cdot |V(x)|\\
&\le \frac{\gamma^2 M^{2(\gamma-1)}}{1-\eta^2}\int_{\RR} U^2.
\end{align*}
This proves \eqref{estimate-eq4}.
\end{proof}

\subsection{Stability of traveling wave solutions and proof of Theorem \ref{stability-thm}}

In this subsection, we prove Theorem \ref{stability-thm}.

Observe that for any $\beta>0$, and $u_1,u_2>0$,
\begin{align*}
u_2^{\beta} -u_1^\beta& ={\beta}\int_{u_1}^{u_2} w^{\beta-1} dw\\
&=\beta (u_2-u_1)\int_0^\tau \big(\tau u_2+(1-\tau)u_1\big)^{\beta -1}d\tau.
\end{align*}

\begin{proof}[Proof of Theorem \ref{stability-thm}]
Suppose that $(U^*,V^*)$ is a positive stationary solution of \eqref{main-moving-eq1} connecting $(1,1)$ and $(0,0)$ with speed $c>c^{**}_{\chi,m,\alpha,\gamma}$
for some $c^{**}_{\chi,m,\alpha,\gamma}$ to be determined later, and satisfying \eqref{decay-rate-eq1}.
Let $(u(t,x),v(t,x))=(u(t,x;u_0),v(t,x;u_0))$ be the solution of \eqref{main-moving-eq1} with $u_0\in C_{\rm unif}^b(\RR)$ satisfying \eqref{stability-u0-eq1} for  some $\eta\in (\kappa,\frac{1}{1+|\chi|^{1/6}})$.

We  first assume that
\begin{equation}
\label{cond-on-c}
c>c_{1, \chi,\gamma}:=\gamma+|\chi|^{1/6}+\frac{1}{\gamma +|\chi|^{1/6}}\quad {\rm and}\quad  c>
c_{2,\chi,m,\gamma}:=m |\chi| M_{\chi}^{m+\gamma-1} +|\chi|^{1/6}.
\end{equation}
In the following, we put
$$
\sigma=\frac{1}{6}.
$$
Note that  \eqref{cond-on-c} implies 
\begin{equation}
\label{cond-on-kappa}
 \kappa<\frac{1}{\gamma +|\chi|^\sigma} \quad {\rm and}\quad  c>m|\chi| M_\chi^{m+\gamma-1}.
\end{equation}
By Proposition \ref{global-existence-prop},
$$
\limsup_{t\to\infty} \sup_{x\in\mathbb{R}}u(t,x;u_0)\le M_\chi.
$$
Then for any $\widetilde M_\chi>M_\chi$,  without loss of generality, we may assume that
\begin{equation}
\label{u-bound-eq}
0\le u(t,x;u_0)\le \widetilde M_{\chi}\quad \forall\, t\ge 0,\,\, x\in\mathbb{R}.
\end{equation}
This together with Lemmas \ref{prelim-v-lm2} and comparison principle for elliptic equations  implies that
\begin{equation}
\label{v-bound-eq}
0\le v(t,x;u_0)\le \widetilde M_\chi^\gamma\quad {\rm and}\quad 
 |v_x(t,x;u_0)|\le \widetilde M_{\chi}^\gamma\quad \forall\, t\ge 0,\,\, x\in\mathbb{R}.
\end{equation}
In the following, we choose $\widetilde M_\chi>M_\chi$ such that  \eqref{cond-on-kappa} holds with $M_\chi$ being replaced by $\widetilde M_\chi$.

\smallskip

Next, let $\widetilde U(t,x)=u(t,x)-U^*(x)$ and $\widetilde V(t,x)=v(t,x)-V^*(x)$. 
For $\beta\ge 0$, let
$$
a_\beta(t,x):=\beta \int_0^1 \big(\tau u(x)+(1-\tau) U^*(x)\big)^{\beta-1}d\tau.
$$
Then $(\widetilde U,\widetilde V)$ satisfies
\begin{equation}
\label{stability-eq1}
\begin{cases}
\widetilde U_t(t,x)=\widetilde U_{xx}+(c- \chi b_1(t,x)) \widetilde  U_x  \cr
\qquad\qquad + \Big(1-a_{1+\alpha} (t,x)-\chi\big (v a_m(t,x)  + a_{m+\gamma}(t,x)
 +b_2(t,x)\big)\Big)\widetilde U\\
\qquad\qquad  - \chi b_3(t,x) \widetilde  V_x + \chi b_4(t,x) \widetilde V,\quad &x\in\mathbb{R},
\cr\cr
0=\widetilde V_{xx}v- \widetilde V_x+  a_\gamma(t,x)  \widetilde U,\quad &x\in\mathbb{R},
\end{cases}
\end{equation}
where 
$$
 b_1(t,x)=  m  v_{x}(t,x) u^{m-1}(t,x),
\,\,
 b_2(t,x)=m U_{x}^*(x) v_{x}(t,x) a_{m-1}(t,x)  ,
$$
and
$$
 b_3(t,x)= m  (U^*(x))^{m-1} U^*_x(x),\,\,
 b_4(t,x)=  (U^*(x))^m.
$$
Let $U=e^{\eta x}\widetilde U$ and $V=e^{\eta x}\widetilde V$.
Then $(U,V)$ satisfies
\begin{equation}
\label{uniqueness-eq2}
\begin{cases}
U_t=U_{xx} +\big(c-\chi b_1(x)-2\eta\big)U_x\cr
\qquad +\big(\eta^2-c\eta+1-a_{1+\alpha} -\chi( v a_m  + a_{m+\gamma}-\eta b_1 + b_2\big) U\cr
\qquad\,\,  -\chi b_3 V_x+\chi (b_4+\eta b_3\big) V,\quad &x\in\mathbb{R}, \cr\cr
0=V_{xx}-2\eta V_x-(1-\eta^2) V +a_\gamma(t,x) U,\quad &x\in\mathbb{R}.
\end{cases}
\end{equation}
By \cite[Theorem 7.1.3]{Hen}, $U(t,\cdot)$, $U_x(t,\cdot)\in L^2(\mathbb{R})$ for all $t>0$.
Multiplying the first equation in \eqref{uniqueness-eq2} by $U$ and integrating over $\RR$ yields
\begin{align*}
\frac{1}{2}\frac{d}{dt}\int_{\RR} U^2&=-\int_{\RR} U_x^2(x)dx+\underbrace{\int_{\RR} \big(c-\chi b_1-2\eta\big)U_xUdx}_{J_1} \\
&\quad +\underbrace{\int_{\RR}\big(\eta^2-c\eta+1-a_{1+\alpha}  -\chi( v a_m + a_{m+\gamma}-\eta b_1+b_2\big) U^2dx}_{J_2}\\
&\quad -\underbrace{\int_{\RR} \chi b_3 V_xUdx}_{J_3}+\underbrace{\int_{\RR}\big(\chi b_4+\eta\chi  b_3\big) V Udx}_{J_4}.
\end{align*}

We divide the rest of the proof into four steps.

\smallskip

\noindent{\bf Step 1.} In this step, we provide estimates for $b_i$ ($i=1,2,3,4$). 

\smallskip

\noindent {\bf (i) Estimate for  $ b_1(t,x)=  m  v_{x}(t,x) u^{m-1}(t,x)$.}  By \eqref{u-bound-eq} and \eqref{v-bound-eq}, we have
\begin{equation}
\label{b-1-eq}
|b_1(t,x)|\le b_{1,\chi,m,\gamma}(\widetilde M_\chi):= m \widetilde M_\chi^{m+\gamma-1}.
\end{equation}

\smallskip

\noindent {\bf (ii)  Estimate for  $b_2(t,x)=m U_{x}^*(x) v_{x}(t,x) a_{m-1}(t,x)$}.  We divide  the estimate of $b_2$ into three cases.

\smallskip

\noindent {\bf Case ii.1.}  $m=1$. In this case,  $a_{m-1}(t,x)\equiv 0$. Hence
\begin{align}
\label{b-2-eq1}
\|b_2\|_\infty=0.
\end{align}

\smallskip

\noindent {\bf  Case ii. 2.}  $m\ge 2$. In this case,  we have 
$$
|a_{m-1}(t, x)|\le (m-1) \widetilde M_{\chi}^{m-2}.
$$
Then we have
\begin{align}
\label{b-2-eq2}
 \|b_2\|_\infty&\le m(m-1)\widetilde  M_{\chi}^{m+\gamma-1}\|U_x^*\|_\infty\nonumber\\
&\le m(m-1) \widetilde M_{\chi}^{m+\gamma-1} \frac{|\chi| \widetilde M_\chi^{m+\gamma} +\widetilde M_\chi(\widetilde M_\chi^\alpha -1)}{c- m |\chi| \widetilde M^{m+\gamma-1}_\chi } \nonumber\\
&\le m(m-1) \widetilde M_{\chi}^{m+\gamma-1} \frac{|\chi| \widetilde M_\chi^{m+\gamma} +\widetilde M_\chi(\widetilde M_\chi^\alpha -1)}{|\chi|^\sigma} .
\end{align}

\smallskip

\noindent {\bf  Case ii.3.} $1<m<2$. Note hat
\begin{align*}
|b_2(t,x)|&=m|U_x^*(x)|\cdot |v_x(t,x)|\cdot |a_{m-1}(t,x)|\\
&\le m(m-1)|U_x^*(x)| \cdot |v_x(t, x)|\cdot \int_0^1 (\tau u(t,x)+(1-\tau)U^*(x))^{m-2}\\
&\le \frac{m(m-1)|U_x^*(x)| \cdot |v_x(t,x)|}{(U^*(x))^{2-m}}\int_0^1 \frac{1}{(1-\tau)^{2-m}}d\tau\\
&=\frac{m|U_x^*(x)| \cdot |v_x(t,x)| (U^*)^{m-1}}{(U^*(x))}\\
&\le m \widetilde M_\chi ^{m +\gamma -1} \frac{|U_x^*(x)| }{U^*(x)}.
\end{align*}
By \eqref{ratio-eq},
\begin{align*}
\frac{|U^*_x(x)|}{U^*(x)|}\le \frac{M'''_{\chi, m, \alpha,\gamma,\sigma}}{|\chi|^{2\sigma}},
\end{align*}
which implies that
\begin{align}
\label{b-2-eq3}
\|b_2\|_\infty\le \frac{m \widetilde M_{\chi}^{m+\gamma-1} M'''_{\chi,m,\alpha,\gamma,\sigma}}{|\chi|^{2\sigma}}.
\end{align}

\smallskip

By \eqref{b-2-eq1}-\eqref{b-2-eq3}, there is $b_{2,\chi,m,\alpha,\gamma}(\widetilde M_\chi)$ which stays bounded as $\chi\to 0$ such that
\begin{equation}
\label{b-2-eq}
\|b_2\|_\infty\le \frac{1}{|\chi|^{2\sigma}} b_{2,\chi,m,\alpha,\gamma}(\widetilde M_\chi).
\end{equation}

\smallskip

\noindent{\bf (iii) Estimate for  
$ b_3(t,x)= m  (U^*(x))^{m-1} U^*_x(x)$.}
By Lemma \ref{estimate-lm1}, it holds that
\begin{align}
\label{b-3-eq}
\|b_3\|_\infty &\le  m \widetilde M_{\chi}^{m-1} \cdot  \frac{|\chi| \widetilde M_\chi^{m+\gamma} +\widetilde M_\chi^{1+\alpha}}{c- m |\chi| \widetilde M^{m+\gamma-1}_\chi } \le   m \widetilde M_{\chi}^{m-1} \cdot  \frac{|\chi| \widetilde M_\chi^{m+\gamma} +\widetilde M_\chi^{1+\alpha}}{|\chi|^\sigma }\nonumber\\
&=\frac{b_{3,\chi,m,\alpha,\gamma}(\widetilde M_\chi)}{|\chi|^\sigma} .
\end{align}
where
$$
b_{3,\chi,m,\alpha,\gamma}(\widetilde M_\chi):=m \widetilde M_{\chi}^{m-1} \cdot (|\chi| \widetilde M_\chi^{m+\gamma} +\widetilde M_\chi^{1+\alpha}).
$$

\noindent {\bf (iv) Estimate for  $b_4(t,x)=  (U^*(x))^m$.}
It  is clear that
\begin{equation}
\label{b-4-eq}
\|b_4\|_\infty\le b_{4,\chi,m}(\widetilde M_\chi):=\widetilde  M_{\chi}^m.
\end{equation}

\smallskip

\noindent {\bf Step 2.} In this step, we  give estimates for $J_i$ $(i=1,2,3,4)$ based on the estimates of $\|b_j\|_\infty$ ($j=1,2,3,4$).

\smallskip

\noindent{\bf (i) Estimate  for $J_1$.} By \eqref{b-1-eq}, it holds that
\begin{align}
\label{J-1-eq}
J_1& =\int_{\RR} (c-2\eta) U_x Udx -\chi \int_{\RR} b_1(t,x) U_x U=-\chi \int_{\RR} b_1(t, x) U_xU\nonumber\\
&\le \frac{1}{2}\int_{\RR} U_x^2 + \frac{  |\chi|^{2 }}{2} b_{1,\chi,m,\gamma} (\widetilde M_\chi)\int_{\RR}  U^2.
\end{align}
\smallskip

\noindent {\bf (ii)  Estimate for $J_2$.} 

Note that  
$$
-\chi\big(v a_m(x)+a_{m+\gamma}(x)\big)\le \begin{cases} \le |\chi|(2m+\gamma) \widetilde M_{\chi}^{m+\gamma-1}  \quad &{\rm if}\quad \chi<0\cr\cr
0\quad &{\rm if}\quad \chi>0.
\end{cases}
$$
Hence
\begin{align*}
&\eta^2-c\eta+1-a_{1+\alpha}  -\chi( - v a_m(x)  + a_{m+\gamma})-\eta\chi  b_1(t,x) +\chi b_2(t,x)\\
&\le \eta^2 -c\eta +1+|\chi| (2m+\gamma)\widetilde M_{\chi}^{m+\gamma-1}-\eta\chi  b_1(t,x) +\chi b_2(t,x).
\end{align*}
This  together with \eqref{b-1-eq} and \eqref{b-2-eq} implies that
\begin{equation}
\label{J-2-eq}
J_2\le \Big ( \eta^2 -c\eta +1+|\chi| (2m+\gamma)\widetilde M_\chi^{m+\gamma-1}+ |\chi| \eta  b_{1,\chi,m,\gamma} (\widetilde M_\chi)  +|\chi|^{1-2\sigma}  b_{2,\chi,m,\alpha,\gamma}(\widetilde M_\chi)\Big) \int_{\RR}  U^2.
\end{equation}

\smallskip

\noindent{\bf (iii)  Estimate for $J_3$.} By Lemma \ref{estimate-lm3} and \eqref{b-3-eq}, we have
\begin{align*}
-J_3&=-\int_{\RR} \chi b_3(x) V_xUdx\le |\chi| \frac{\|b_3\|_\infty }{2}\int_{\RR} |V_x|^2 +|\chi| \frac{\|b_3\|_\infty }{2}\int_{\RR}  U^2\\
&\le |\chi|^{1-\sigma} \frac{b_{3,\chi,m,\alpha,\gamma}(\widetilde M_\chi)}{2}\Big(1+\frac{\gamma^2}{1-\eta^2}\Big)\int_{\RR} U^2.
\end{align*}
 Since $\eta\in(\kappa,\frac{1}{1+|\chi|^\sigma})$, there holds that
$$
1+\frac{\gamma^2 }{1-\eta^2}\le 1+\frac{\gamma^2}{1-\eta}\le \frac{1}{|\chi|^\sigma}\big(|\chi|^\sigma +\gamma^2 (1+|\chi|^\sigma)\big).
$$
Hence
\begin{equation}
\label{J-3-eq}
J_3\le \frac{ |\chi|^{1-2\sigma} }{2}  b_{3,\chi,m,\alpha,\gamma} (\widetilde M_\chi)\big(|\chi|^\sigma +\gamma^2 (1+|\chi|^\sigma)\big) \int_{\RR} U^2.
\end{equation}

\smallskip

\noindent{\bf (iv)  Estimate for $J_4$.}   
Since  $\eta\in (\kappa,\frac{1}{1+|\chi|^\sigma})$, it holds
$$
1+\frac{\gamma^2 }{(1-\eta)^2}\le  \frac{1}{|\chi|^{2\sigma}}\big(|\chi|^{2\sigma}+\gamma^2 (1+|\chi|^\sigma)^2\big).
$$
Then by Lemma \ref{estimate-lm3}, \eqref{b-3-eq} and \eqref{b-4-eq}, it holds that
\begin{align}
\label{J-4-eq}
J_4&\le \frac{1}{2}\int_{\RR}
|\chi b_4(t,x)+\eta \chi b_3(t,x)| V^2+\frac{1}{2} \int_{\RR}
|\chi b_4(x)+\eta \chi b_3(x)| U^2\nonumber\\
&\le \frac{1}{2}\Big(|\chi| \widetilde M_\chi^m +\eta |\chi|^{1-\sigma} b_{3,\chi,m,\alpha,\gamma}(\widetilde M_\chi)\Big)\Big(\int_{\RR} V^2+\int_{\RR} U^2\Big)\nonumber\\
&\le \frac{1}{2}\Big(|\chi| \widetilde M_\chi^m+\eta |\chi|^{1-\sigma} b_{3,\chi,m,\alpha,\gamma}(\widetilde M_\chi)\Big) \Big(1+\frac{\gamma^2}{(1-\eta)^2}\Big)\int_{\RR}U^2\nonumber\\
&\le \frac{|\chi|^{1-3\sigma}}{2}\big(|\chi|^{3\sigma} \widetilde M_\chi^m+\eta b_{3,\chi,m,\alpha,\gamma}(\widetilde M_3)\big)\big(|\chi|^{2\sigma}+\gamma^2 (1+|\chi|^\sigma)^2\big)\int_{\mathbb{R}} U^2.
\end{align}

\smallskip

By \eqref{J-1-eq}-\eqref{J-4-eq}, 
\begin{align}
\label{U-square-eq1}
&\frac{1}{2}\frac{d}{dt}\int_{\RR} U^2+\frac{1}{2}\int_{\RR} U_x^2 dx\le \left( \eta^2 -(c-|\chi|^{1-3\sigma} \widetilde D'_{\chi}) \eta +(1+|\chi|^{1-3\sigma} \widetilde D''_{\chi})\right)\int_{\RR}U^2,
\end{align}
where 
\begin{align}
\label{D-1-eq}
\widetilde D'_\chi=\widetilde D'_{\chi,m,\alpha,\gamma}(\widetilde M_\chi):=|\chi|^{3\sigma} b_{1,\chi,m,\kappa}(\widetilde M_\chi)+\frac{b_{3,\chi,m\alpha,\kappa}(\widetilde M_\chi)}{2}\big(|\chi|^{2\sigma} +\gamma^2 (1+|\chi|^\sigma)^2\big),
\end{align}
and
\begin{align}
\label{D-2-eq}
\widetilde D''_\chi&=\widetilde D''_{\chi,m,\alpha,\gamma}(\widetilde M_\chi):=\frac{1}{2} |\chi|^{1+\sigma} b_{1,\chi,m,\gamma}(\widetilde M_\chi)+|\chi|^{3\sigma} (2m+\gamma)\widetilde M_\chi^{m+\gamma-1}+|\chi|^\sigma b_{2,\chi,m,\alpha,\gamma}(\widetilde M_\chi)\nonumber\\
&\quad \qquad\qquad \quad\,\, +\frac{|\chi|^\sigma}{2} b_{3,\chi,m,\alpha,\gamma}(\widetilde M_\chi) \big(|\chi|^\sigma +\gamma^2 (1+|\chi|^\sigma)\big) \nonumber\\
&\quad\qquad\qquad\quad\,\,  +\frac{1}{2} |\chi|^{3\sigma} \widetilde M_\chi^m \big(|\chi|^{2\sigma}+\gamma^2 \big(1+|\chi|^\sigma)^2\big).
\end{align}

\noindent {\bf Step 3.}  In this  step, we prove there is $c^{**}_{\chi,m,\alpha,\gamma}>0$ such that for any $c>c^{**}_{\chi,m,\alpha,\gamma}$ and any $\eta\in (\kappa,\frac{1}{1+|\chi|^\sigma})$,  $\lim_{t\to\infty} \int_{\RR}U^2(t,x)dx=0$ provided that $\int_{\RR} e^{2\eta x}(u_0(x)-U^*(x))^2dx<\infty$.

\smallskip

To this end, first,  let 
$$
D'_{\chi,m,\alpha,\gamma}:=D'_{\chi,m,\alpha,\gamma}(M_\chi)\quad {\rm and}\quad D''_{\chi,m,\alpha,\gamma}:=D''_{\chi,m,\alpha,\gamma}(M_\chi).
$$ 
Let 
$$
c_{3,\chi,m,\alpha,\gamma}:=|\chi|^{1-3\sigma} D'_{\chi,m,\alpha,\gamma}+ 2\sqrt{1+|\chi|^{1-3\sigma} D''_{\chi,m,\alpha,\gamma}},
$$
and
\begin{equation}
\label{c-star-star-eq}
c^{**}_{\chi,m,\alpha,\gamma}:=\max\{c_{1,\chi,\sigma},c_{2,\chi,m,\gamma},c_{3,\chi,m,\alpha,\gamma}\}.
\end{equation} 
Note that for any $c>c^{**}_{\chi,m,\alpha,\gamma}$, 
\begin{align}
\label{eta-eq}
&\kappa^-_{c,\chi,m,\alpha,\gamma}:=\frac{c-|\chi|^{1-3\sigma} D'_{\chi,m,\alpha,\gamma}-\sqrt {(c-|\chi|^{1-3\sigma}D'_{\chi,m,\alpha,\gamma})^2 -4 (1+|\chi|^{1-3\sigma}D''_{\chi,m,\alpha,\gamma})}}{2}\nonumber\\
&\le \kappa <\frac{1}{1+|\chi|^\sigma}\nonumber\\
&<\kappa^+_{c,\chi,m,\alpha,\gamma}:=\frac{c-|\chi|^{1-3\sigma} D'_{\chi,m,\alpha,\gamma}+\sqrt {(c-|\chi|^{1-3\sigma}D'_{\chi,m,\alpha,\gamma})^2 -4 (1+|\chi|^{1-3\sigma}D''_{\chi,m,\alpha,\gamma})}}{2}.
\end{align}
Then
$$
 \eta^2 -(c-|\chi|^{1-3\sigma}  D'_{\chi}) \eta +(1+|\chi|^{1-3\sigma}  D''_{\chi})
<0
$$
for any $\eta\in (\kappa,\frac{1}{1+|\chi|^\sigma})$. 

Next,  for any $\eta\in (\kappa,\frac{1}{1+|\chi|^\sigma})$,
choose $\widetilde M_\chi$ sufficiently close to $M_\chi$ such that
$$
\lambda:= \eta^2 -(c-|\chi|^{1-3\sigma} \widetilde D'_{\chi}) \eta +(1+|\chi|^{1-3\sigma} \widetilde D''_{\chi})
<0
$$
It then follows that,   if $\int_{\RR} e^{2\eta x} (u_0(x)-U^*(x))^2dx<\infty$, then
$$
\int_{\RR}U^2(t,x)dx\le e^{-\lambda t} \int_{\RR} U^2(0,x)dx\to 0
$$
as $t\to\infty$. Moreover, 
\begin{equation}
\label{new-aux-U-eq1}
\int_0^\infty \int_{\RR} U^2(t,x)dx<\infty,
\end{equation}
and
\begin{equation}
\label{new-aux-U-eq2}
\int_0^\infty \int_{\RR} U_x^2(t,x)dx dt<\infty.
\end{equation}

\noindent{\bf Step 4.} In this step, we prove $\lim_{t\to\infty}\|u(t,\cdot;u_0)-U^*(\cdot)\|_\infty=0$
provided that $\int_{\RR} e^{2\eta x}(u_0(x)-U^*(x))^2 dx<\infty$, 
$c>c_{\chi,m,\alpha, \gamma}^{**}$, and $\eta\in (\kappa,\frac{1}{1+|\chi|^\sigma})$.

\smallskip
First,  for any fixed $t_0>0$,  there hold $\int_{\mathbb{R}}U^2(t_0,x)dx<\infty$ and 
$\int_{\mathbb{R}} U_x^2(t_0,x)dx<\infty$. Then, by Morrey's inequality,
there is $\widetilde M>0$ such that
$$
|U(t_0,x)|=e^{\eta x}|u(t_0,x)-U^*(x)|\le \widetilde M\quad \forall\, x\in\mathbb{R},
$$
i.e.,
$$
U^*(x)-\widetilde M e^{-\eta x} \le u(t_0,x)\le U^*(x)+\widetilde M e^{-\eta x}\quad \forall\, x\in\mathbb{R}.
$$
This together with \eqref{decay-rate-eq1}  and the boundedness of $u(t_0,x)$  implies that
\begin{equation}
\label{u-t0-eq}
e^{-\kappa x}-De^{-\tilde\kappa x}\le u(t_0,x)\le \max\{M,M e^{-\kappa x}\}\quad \forall\, x\in\mathbb{R}
\end{equation}
for some  $\tilde \kappa \in (\kappa,  \min\{(1+\alpha)\kappa, m\kappa+1/2,1\})$, $M\ge 1$ and $D\gg 1$.
By Remark \ref{prelim-sub-rk2}, 
$$
u(t,x;u_0)\ge  e^{-\kappa x}-D e^{-\tilde \kappa x}\quad \forall\, x\in\mathbb{R}\quad \forall\, t\ge t_0,\,\, x\in\mathbb{R},
$$
and  
\begin{equation}
\label{lower-bound-eq1}
 \inf_{t\ge t_0,x\le -R_0} u(t,x;u_0)>0
\end{equation}
for some $R_0>0$.

Next, assume that 
$$
\limsup_{t\to\infty} \|u(t,\cdot;u_0)-U^*(\cdot)\|_\infty>0.
$$
Then there are $\epsilon_0>0$, $t_n\to\infty$ with $1\le t_n\le t_{n+1}-1$, and $x_n\in\mathbb{R}$ such that
$$
|u(t_n,x_n)-U^*(x_n)|\ge \epsilon_0.
$$
By the uniform continuity of $u(t,x)-U^*(x)$ in $t\ge 1$ and $x\in\mathbb{R}$, there is $\delta_0>0$ such that
$$
|u(t,x)-U^*(x)|\ge \frac{\epsilon_0}{2}\quad \forall\, t\in [t_n,t_n+\delta_0],\,\, x\in [x_n-\delta_0,x_n+\delta_0].
$$
Without loss of generality, we may assume that 
$\lim_{n\to\infty} x_n=\infty$, $\lim_{n\to\infty }x_n=x^*$ for some $x^*\in\mathbb{R}$, or
$\lim_{n\to\infty} x_n=-\infty$.

In the case that $\lim_{n\to\infty} x_n=\infty$ or $\lim_{n\to\infty} x_n=x^*$ for some $x^*\in\mathbb{R}$, there is $R\in\mathbb{R}$
such that $x_n\ge R$ for $n\gg 1$.  
Then
$$
\int_{\RR} e^{2\eta x} (u(t_n,x;u_0)-U^*(x))^2 dx\ge
\frac{\epsilon_0^2}{4} \int_{x_n}^{x_n+\delta_0} e^{2\eta x}dx \ge \frac{\epsilon_0^2}{4}\int_{R}^{R+\delta_0} e^{2\eta x}dx \not \to 0
\quad {\rm as}\quad n\to\infty,
$$
 which is a contradiction.

Hence,  $\lim_{x\to\infty}x_n=-\infty$. By \eqref{lower-bound-eq1}, there is   $d_0>0$ such that
\begin{equation}
\label{aux-decay-rate-eq2}
u(t,x;u_0)\ge d_0\quad \forall\, t\ge t_1,\,\, x\le -R_0.
\end{equation}
Let $(\tilde u_n(t,x),\tilde v_n(t,x))=(u(t+t_n,x+x_n;u_0),v(t+t_n,x+x_n;u_0))$. Without  loss of generality, we may assume that there is $(u^*(t,x),v^*(t,x))$ such that
$$
\lim_{n\to\infty} (u_n(t,x),v_n(t,x))=(u^*(t,x),v^*(t,x))
$$
locally uniformly in $(t,x)\in\mathbb{R}\times\mathbb{R}$ and $(u^*(t,x),v^*(t,x))$ is a classical solution of \eqref{main-moving-eq1}.
Note that
$$
u^*(t,x)\ge  d_0\quad \forall\, t\in\mathbb{R},\,\, x\in\mathbb{R},
$$
and 
$$
|u^*(0,0)-1|\ge \epsilon_0.
$$
But by Proposition \ref{constant-stability-prop}, $u^*(t,x)\equiv 1$, which is a contradiction.
Therefore, $\lim_{t\to\infty}\|u(t,\cdot;u_0)-u^*(\cdot)\|_\infty=0$. Theorem \ref{stability-thm} is thus proved.
\end{proof}

\subsection{Uniqueness of traveling wave solutions and proof of Theorem \ref{uniqueness-thm}}
\label{ss:uniqueness}

In this subsection, we prove Theorem  \ref{uniqueness-thm}.

\begin{proof}[Proof of Theorem \ref{uniqueness-thm}]
By  \eqref{decay-rate-eq2} and Remark  \ref{existence-rk}(1),  there is $\eta$ such that
$$
\kappa<\eta<\min\{ \frac{1}{1+|\chi|^{1/6}},1\}, 
$$
$$
\int_{\RR} e^{2\eta x} |U_1^*(x)-U_2^*(x)|^2dx<\infty,
$$
and
$$
\liminf_{x\to -\infty} U_i^*(x)>0,\quad i=1,2.
$$
Then by Theorem \ref{stability-thm}, we have
$$
U_1(x)\equiv U_2(x).
$$
This proves the theorem  \ref{uniqueness-thm}.
\end{proof}

\end{document}

For given $u_0\in C_{\rm unif}^b(\RR)$, let $(U(t,x;u_0),V(t,x;u_0))$ be the solution of
\eqref{moving-coordinate-eq} with $U(0,x;u_0)=u_0(x)$.

\begin{lemma}
\label{prelim-lm5}
Assume $-1<\chi\le 0$. Let   $c>2$ be such that  $\kappa:=\frac{c-\sqrt{c^2-4}}{2}\le \sqrt { \frac{1+\chi}{1-3\chi}}$.
For any $M>1$, 
if $u_0\in C_{\rm unif}^b(\RR)$ satisfies $0\le u_0(x)\le \min\{M, Me^{-\kappa x}\}$ and $u_0^{'}(x)\le 0$ for all $x\in\RR$, then
\begin{equation}
\label{U-eq1}
0\le U(t,x;u_0)\le \min\{M,Me^{-\kappa x}\}\quad {\rm and}\quad U_x(t,x;u_0)\le 0\quad \forall\, t>0,\,\, x\in\RR.
\end{equation}
\end{lemma}

\begin{proof}
First, note that it suffices to prove that, for any $T>0$,  \eqref{U-eq1} holds for $0<t\le T$.

Next, for given $T>0$, let
$$
\mathcal{X}=\{u\in C([0,T,C_{\rm unif}^b(\RR)\}
$$
with norm  $\|u\|_{\mathcal{X}}=\sup_{t\in [0,T]}\|u(t,\cdot)\|$. Let
\begin{align*}
\mathcal{E}_T=\Big\{u\in C([0,T],C_{\rm unif}^b(\RR))\,|\, &u(0,x)=u_0(x),\,\, 0\le  u(t,x)\le \min\{M,Me^{-\kappa x}\}\,\, \forall\, t\in[0,T],\,\,  x\in\RR\,\, \\
& {\rm and}\,\, u(t,x_1)\ge u(t,x_2)\,\, \forall\, t\in[0,T],\, \,  x_1<x_2\Big\}.
\end{align*}
It is clear that $\mathcal{E}_T$ is a closed convex subset of $\mathcal{X}$.

For given $u\in\mathcal{E}_T$, let 
$$
V(t,x;u)=\Psi(x;u(t,\cdot),1)\quad \forall\, t\in [0,T],\, x\in\RR.
$$
By comparison principle for elliptic equations, there holds
$$
V_x(t,x;u)\le 0\quad \forall\, t\in [0,T],\,\,  x\in\RR.
$$
Let $U(t,x;u)$ be the solution of
\begin{equation}
\label{U-eq2}
\begin{cases}
U_t=U_{xx}+cU_x-\chi V_x U_x +U(1-\chi V-(1-\chi) U),\quad x\in\RR\cr
U(0,x)=u_0(x).
\end{cases}
\end{equation}
It is clear that $U(\cdot,\cdot;u)\in \mathcal{X}$.

We now claim that $U(\cdot,\cdot)\in\mathcal{E}_T$. To this end, 
without loss of generality, we may assume that $u_0^{'}(x)\le 0$ and
$u_0^{'}\in C_{\rm unif}^b(\RR)$. Let $W=U_x(t,x;u)$. Then $W$ satisfies
\begin{align*}
W_t&=W_{xx}+cW_x-\chi V_x W_x -\chi V_{xx} W+(1-\chi V-(1-\chi)U)W-(\chi V_x+(1-\chi)W)U\\
&=W_{xx}+cW_x -\chi V_x W_x +\big(\chi u-2\chi V+1-2(1-\chi)U\big)W-\chi V_x U\quad\qquad {\rm (since}\,\, V_{xx}=V-u)\\
&\le W_{xx}+cW_x-\chi V_x W_x +\big(\chi u-1-2(1-\chi)U\big)W \quad\quad \quad\qquad\qquad\,\,\,\,\,  \text{(since}\,\, \,\, \chi\le 0\,\, {\rm and}\,\, V_x\le 0)
\end{align*}
and
$$
W(0,x)=u_0^{'}(x)\le 0\quad \forall\, x\in\RR.
$$
It then follows from comparison principle for parabolic equations that
\begin{equation}
\label{U-eq3}
U_x(t,x;u)=W(t,x)\le 0\quad \forall\, t\in [0,T],\,\, x\in\RR.
\end{equation}

Note that $u(t,x)\le M$ for all $t\in[0,T]$ and $x\in\RR$. Hence
$$
V(t,x;u)\le M\quad \forall\, t\in [0,T],\,\, x\in\RR.
$$
This together with $\chi\le 0$  implies that
$$
1-\chi V -(1-\chi)M \le 1-\chi M -(1-\chi)M=1-M\le 0.
$$
It then follows from comparison principle for parabolic equations that
\begin{equation}
\label{U-eq4}
U(t,x;u)\le M\quad \forall\, t\in [0,T],\,\, x\in\RR.
\end{equation}

Note also that 
\begin{align*}
&(e^{-\kappa x})_{xx}+c(e^{-\kappa x})_x-\chi V_x (e^{-\kappa x})_x +(1-\chi V-M(1-\chi) e^{-\kappa x}) e^{-\kappa x}\\
&=\Big[\kappa \chi V_x 
-\chi V-M(1-\chi)e^{-\kappa x}\Big]e^{-\kappa x}.
\end{align*}
By Lemma \ref{prelim-lm1},
\begin{align*}
\kappa \chi V_x-\chi V&=-\frac{\kappa \chi}{2}e^{-x}\int_{-\infty}^x e^y u(t,y)dy+\frac{\kappa\chi}{2}e^x\int_x^\infty e^{-y}u(y)dy-\frac{\chi}{2}\int_{-\infty}^\infty e^{-|x-y|}u(t,y)dy\\
&=-\frac{(\kappa+1)\chi}{2} e^{-x}\int_{-\infty}^x e^y u(t,y)dy+\frac{(\kappa-1)\chi}{2} e^x\int_x^\infty e^{-y} u(t,y)dy\\
&\le  -\frac{(\kappa +1)\chi M}{2} e^{-x}\int_{-\infty}^ x e^y e^{-\kappa y}dy
+\frac{(\kappa -1)\chi M}{2}e^x\int_x^\infty e^{-y} e^{-\kappa y}dy\\
&=\Big[-\frac{(1+\kappa)\chi M}{2(1-\kappa)}-\frac{(1-\kappa)\chi M}{2(1+\kappa)}\Big]  e^{-\kappa x}.
\end{align*}
Therefore
\begin{align*}
&(e^{-\kappa x})_{xx}+c(e^{-\kappa x})_x-\chi V_x (e^{-\kappa x})_x +(1-\chi V-M(1-\chi) e^{-\kappa x}) e^{-\kappa x}\\
&\le - \Big[\frac{(1+\kappa)\chi }{2(1-\kappa)} +\frac{(1-\kappa)\chi}{2(1+\kappa)}+(1-\chi)\Big] M e^{-2\kappa x}\\
&=-\frac{(1+\kappa)^2\chi+(1-\kappa)^2\chi +2(1-\kappa^2)(1-\chi)}{2(1-\kappa^2)} M e^{-2\kappa x}\\
&=- \frac{1+\chi+(3\chi-1)\kappa^2}{2(1-\kappa^2)} M e^{-\kappa x}\\
&\le 0\quad \qquad \qquad\qquad\qquad  \Big(\text{since} \quad -1< \chi\le 0,\,\, 0<\kappa\le \sqrt { \frac{1+\chi}{1-3\chi}}\Big).
\end{align*}
This implies that $U=M e^{-\kappa x}$ is a super-solution of \eqref{U-eq2}. Then
\begin{equation}
\label{U-eq5}
U(t,x;u)\le M e^{-\kappa x}\quad \forall\, t\in [0,T],\,\, x\in\RR.
\end{equation}
By  \eqref{U-eq3}, \eqref{U-eq4},  and \eqref{U-eq5}, we have $U(\cdot,\cdot;u)\in\mathcal{E}_T$.

Finally, we claim that the mapping 
\begin{equation}
\label{U-eq6}
\mathcal{E}_T\ni u\mapsto U(\cdot,\cdot;u)\in\mathcal{E}_T
\end{equation}
has a fixed point. ????

Denote the fixed point of the mapping in \eqref{U-eq6} by $U(t,x;u_0)$. Let
$V(\cdot,\cdot;u_0):=\Psi(\cdot,\cdot;U(\cdot,\cdot;u_0))$.  It is clear that
$(U(t,x;u_0), V(t,x;u_0))$ is  the solution of 
\eqref{moving-coordinate-eq} with initial condition $U(0,x;u_0)=u_0(x)$. 
\end{proof}

\subsection{Existence of traveling waves connecting constant solutions with positive sensitivity}

In this section,  we prove the existence of  the existence of monotone stationary solutions of \eqref{moving-coordinate-eq} with $\chi> 0$ and $c>2$.

Recall that traveling wave solutions of \eqref{main-eq} are stationary solutions of \eqref{moving-coordinate-eq}, that is, solutions of 
\begin{equation}
\label{moving-coordinate-stationary-eq}
\begin{cases}
0=U_{xx}+cU_x -\chi U_x V_x +U(1-\chi V -(1-\chi) U),\quad &x\in\RR\cr
0=V_{xx}-V+U,\quad&x\in\RR.
\end{cases}
\end{equation}
Throughout this section, we assume that $0<\chi<1$ and $c>2$. 

 View $U$ as the solution of the following second order ODE:
\begin{equation}
\label{moving-coordinate-ODE}
U_{xx}+cU_x+U=\chi U_x V_x +\big(\chi V+(1-\chi)U\big)U,
\end{equation}
where  $V=\Psi(x;U,1)$. Observe that
$$
\lambda^2 +c\lambda +1=0
$$
has two negative roots
$$
\lambda_\pm =\frac{-c\pm \sqrt{c^2-4}}{2}.
$$
Then by variation of constant formula, any bounded solution of \eqref{moving-coordinate-ODE} satisfies
\begin{equation}
\label{moving-coordinate-ODE-solu}
U(x)=\frac{1}{\lambda_+-\lambda_-} \int_{-\infty}^x \Big(e^{\lambda_+(x-y)} -e^{\lambda_-(x-y)}\Big)\big(\chi U_x V_x +\chi V U +(1-\chi)U^2\big) dy.
\end{equation}
 
Define
$$
(\mathcal{L}U)(x)=\frac{1}{\lambda_+-\lambda_-} \int_{-\infty}^x \Big(e^{\lambda_+(x-y)} -e^{\lambda_-(x-y)}\Big)\big(\chi U_x V_x +\chi V U +(1-\chi)U^2\big) dy,
$$
where $V(x)=\Psi(x;U,1)$.  
Note that
$$
e^{\lambda_+(x-y)}-e^{\lambda_-(x-y)}>0\quad \forall\, y<x.
$$
Hence,  if $U\ge 0$ and $U_x\le 0$, then
$V\ge 0$ and $V_x\le 0$, and  $\mathcal{L}U\ge 0$. 

Note that
$$
\Big(\mathcal{L}U\big)^{'}(x)=\int_{-\infty}^x \Big(\lambda_+ e^{\lambda_+(x-y)}-\lambda_- e^{\lambda_-(x-y)}\Big) \big(\chi U_x V_x +\chi U V +(1-\chi)U^2\big)dy.
$$
{\color{red} (W.S. it may not be true that $\Big(\mathcal{L}U\big)^{'}(x)\le 0$)}
